\title[Instability of an anisotropic micropolar fluid]{Anisotropic micropolar fluids subject to a uniform microtorque: the unstable case}
\author{Antoine Remond-Tiedrez}
\address{
Department of Mathematical Sciences\\
Carnegie Mellon University\\
Pittsburgh, PA 15213, USA
}
\email[A. Remond-Tiedrez]{aremondt@andrew.cmu.edu}
\author{Ian Tice}
\address{
Department of Mathematical Sciences\\
Carnegie Mellon University\\
Pittsburgh, PA 15213, USA
}
\email[I. Tice]{iantice@andrew.cmu.edu}
\thanks{I. Tice was supported by an NSF CAREER Grant (DMS \#1653161).}
\subjclass[2010]{Primary: 35B35, 74A60, 76A05; Secondary: 35M31, 35P15, 35Q30}
\keywords{Anisotropic micropolar fluid, nonlinear instability}
\begin{document}

	\begin{abstract}
		We study a three-dimensional, incompressible, viscous, micropolar fluid with anisotropic microstructure on a periodic domain.
		Subject to a uniform microtorque, this system admits a unique nontrivial equilibrium.
		We prove that this equilibrium is nonlinearly unstable. Our proof relies on a nonlinear bootstrap instability argument
		which uses control of higher-order norms to identify the instability at the $L^2$ level.
	\end{abstract}

	\maketitle


\section{Introduction}
\label{sec:intro}

\vspace{0.75em}
\subsection{Brief discussion of the model}
\label{sec:pre_intro}
	Micropolar fluids were introduced by Eringen in \cite{eringen_first} as part of an effort to describe microcontinuum mechanics, which extend classical continuum mechanics by
	taking into account the effects of microstructure present in the medium.
	For viscous, incompressible continua, this results in a model in which the incompressible Navier-Stokes equations are coupled to an evolution equation for the rigid microstructure present at every point of the continuum.
	This theory can be used to describe aerosols and colloidal suspensions such as those appearing in biological fluids \cite{maurya_biological_fluids},
	blood flow \cite{ramkissoon_air_bubble_blood_flow, beg_al_blood_flow, mekheimer_kot_blood_flow},
	lubrication \cite{allen_kline_lubrication, bayada_lukaszewicz_lubrication, rajasekhar_nicodemus_sharma_lubrication}
	and in particular the lubrication of human joints \cite{sinha_singh_prasad_human_joints},
	liquid crystals \cite{eringen_first, lhuillier_rey_liquid_crystals, gay_balmaz_ratiu_tronci_liquid_crystals}, and ferromagnetic fluids \cite{nochetto_salgado_tomas_ferrohydrodynamics}.
	
	We postpone a more thorough discussion of the model until Section \ref{sec:discussion} and here provide only a brief overview sufficient to state the main result.  The variables needed to describe the state of a micropolar fluid at a point in three-space and time are as follows: the fluid velocity is a vector $u \in \R^3$, the fluid pressure is a scalar $p \in \R$, the microstructure's angular velocity is a vector $\omega \in \R^3$, and the microstructure's inertia tensor is a positive definite symmetric matrix $J \in \R^{3\times 3}$.  We study \emph{homogeneous} micropolar fluids, which means that the microstructures at any two points of the fluid are equal up to a proper rotation.  In turn, this means that the microinertia tensors at any two points of the fluid are equal up to conjugation.  Note that the shape of the microstructure determines the inertia tensor, but the converse fails in the sense that the same inertia tensor may be achieved by differently shaped microstructure.  
	
	We restrict our attention to problems in which the microinertia plays a significant role, and so in this paper we only consider \emph{anisotropic} micropolar fluids for which the microinertia tensor is not isotropic, i.e. $J$ has at least two distinct eigenvalues.  In fact,  we study micropolar fluids whose microstructure has an \emph{inertial axis of symmetry}, which means that the microinertia $J$ has a repeated eigenvalue.  More concretely: there are some physical constants $\lambda, \nu > 0$ which depend on the microstructure such that, at every point, $J$ is a symmetric matrix with spectrum $\{\lambda, \lambda, \nu\}$.  This is in some sense the intermediate case between the case of isotropic microstructure where the microinertia has a repeated eigenvalue of multiplicity three and the ``fully'' anisotropic case where the microstructure has three distinct eigenvalues.
	
	The equations of motion related to these quantities in the periodic spatial domain $\T^3 = \R^3 / {\brac{2\pi\Z}}^3$, subject to an external microtorque $\tau e_3$, read:
		\begin{subnumcases}{}
		\pdt u + \brac{u \cdot \nabla} u = \tilde{\mu} \Delta u + \kappa \nabla\times\omega - \nabla p
		&on $\brac{0, T} \times \T^3$,
		\label{eq:stat_prob_lin_mom}\\
		\nabla\cdot u = 0
		&on $\brac{0, T} \times \T^3$,
		\label{eq:stat_prob_incompress}\\
		J\brac{ \pdt\omega + \brac{u\cdot\nabla}\omega } + \omega\times J\omega
			= \kappa\nabla\times u - 2\kappa\omega + \brac{\tilde{\alpha} - \tilde{\gamma}} \nabla\brac{\nabla\cdot\omega} + \tilde{\gamma} \Delta\omega + \tau e_3
		\hspace{-1.2em}&on $\brac{0, T} \times \T^3$, { \hspace{1.5em} }
		\label{eq:stat_prob_ang_mom}\\
		\pdt J + \brac{u\cdot \nabla} J = \sbrac{\Omega, J}
		&on $\brac{0, T} \times \T^3$,
		\label{eq:stat_prob_microinertia}
	\end{subnumcases}
	where $\sbrac{\,\cdot\,,\,\cdot\,}$ denotes the matrix commutator, $\tilde{\mu}$, $\kappa$, $\tilde{\alpha}$, and $\tilde{\gamma}$ are physical constants related to viscosity, $\tau$ denotes the magnitude of the microtorque,
	and $\Omega$ is the 3-by-3 antisymmetric matrix identified with $\omega$ via the identity $\Omega v = \omega\times v$ for every $v\in\R^3$. 

	We have chosen to consider the situation in which external forces are absent and the external microtorque is constant, namely equal to $\tau e_3$ for some fixed $\tau > 0$.
	Note that the choice of $e_3$ as the direction of the microtorque may be made without loss of generality since the equations are equivariant under proper rotations,
	in the sense that if $\brac{u, p, \omega, J}$ is a solution of \eqref{eq:stat_prob_lin_mom}--\eqref{eq:stat_prob_microinertia} then, for any $\mathcal{R}\in SO\brac{3}$,
	$\brac{u, p, \mathcal{R}\omega, \mathcal{R}J\mathcal{R}^T}$ is a solution of \eqref{eq:stat_prob_lin_mom}--\eqref{eq:stat_prob_microinertia} \emph{provided} that the external torque
	$\tau e_3$ is replaced by $\tau \mathcal{R} e_3$.

	There are two ways to motivate our choice to have no external forces and a constant microtorque.
	On one hand, it is reminiscent of certain chiral active fluids constituted of self-spinning particles which continually pump energy into the system
	\cite{banerjee_souslov_et_at_chiral_active_fluids}, as our constant microtorque does.
	On the other hand, this choice of an external force -- external microtorque pair is motivated by the dearth of analytical results on anisotropic micropolar fluids.
	It is indeed natural, as a first step in the study of non-trivial equilibria of anisotropic micropolar fluids, to consider a simple external force -- external microtorque pair yielding
	non-trivial equilibria for the angular velocity $\omega$ and the microinertia $J$. The simplest nonzero such pair is precisely our choice of $\brac{0, \tau e_3}$.

	Let us now turn to the aforementioned equilibrium.  Due to the uniform microtorque, the system admits a  nontrivial equilibrium.  At equilibrium the fluid velocity is quiescent ($u_{eq}=0$), the pressure is null ($p_{eq} = 0$), the angular velocity is aligned with the microtorque ($\omega_{eq} = \frac{\tau}{2\kappa} e_3$),
	and the inertial axis of symmetry of the microstructure is aligned with the microtorque such that the microinertia is $J_{eq} = \diag (\lambda, \lambda, \nu)$.
	
	\begin{figure}[h!]
		\centering
		\begin{subfigure}[h!]{0.45\textwidth}
			\centering
			\includegraphics{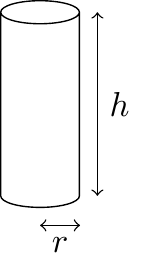}
			\caption{This rigid body is inertially oblong if $h^2 > 6 r^2$.}
			\label{fig:oblong}
		\end{subfigure}
		\begin{subfigure}[h!]{0.45\textwidth}
			\centering
			\includegraphics{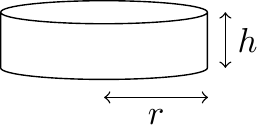}
			\caption{This rigid body is inertially oblate if $h^2 < 6 r^2$.}
			\label{fig:oblate}
		\end{subfigure}
		\caption{Two rigid bodies with uniform density which possess an inertial axis of symmetry.}
		\label{fig:rigid_bodies}
	\end{figure}

	Physically-motivated heuristics (which again we postpone until Section \ref{sec:discussion}) suggest that the stability of this equilibrium depends on the `shape' of the microstructure.  The heuristics suggest that  if the microinertia is inertially oblong, i.e. if $\lambda > \nu$, then the equilibrium is unstable,
	and that if the microinertia is inertially oblate, i.e. if $\nu > \lambda$, then the equilibrium is stable.
	This nomenclature is justified by the fact that for rigid bodies with an axis of symmetry and a uniform mass density, the notions of being oblong (or oblate),
	which essentially means that the body is longer (respectively shorter) along its axis of symmetry than it is wide across it, and being inertially oblong (respectivelly inertially oblate) coincide.
	Examples of inertially oblong and oblate rigid bodies are provided in \fref{Figure}{fig:rigid_bodies}.
	This paper deals with the instability of inertially oblong microstructure.
	In future work we will study the stability of inertially oblate microstructure.

\vspace{0.75em}
\subsection{Statement of the main result}
\label{sec:statement_problem_main_result}
	The main thrust of this paper is to prove that if the microstructure is inertially oblong, then the equilibrium is nonlinearly unstable in $L^2$.  A precise statement of the theorem may be found in  \fref{Theorem}{thm:bootstrap_instability}, but an informal statement of the result is the following. 

	\begin{thm}[$L^2$ instability of the equilibrium]
	\label{thm:main_result}
		Suppose that the microstructure is inertially oblong, i.e. suppose that $\lambda > \nu$,
		and let $X_{eq} = (u_{eq}, \omega_{eq}, J_{eq} ) = (0, \frac{\tau}{2\kappa} e_3, \diag(\lambda, \lambda, \nu))$ be the equilibrium solution
		of \eqref{eq:stat_prob_lin_mom}--\eqref{eq:stat_prob_microinertia}.
		Then $X_{eq}$ is nonlinearly unstable in $L^2$.
	\end{thm}
	Here the notion of nonlinear instability is the familiar one from dynamical systems: there exists a radius $\delta >0$ and a sequence of initial data $\{X_n^0\}_{n=0}^\infty$, converging to $X_{eq}$ in $L^2$, such that the solutions to \eqref{eq:stat_prob_lin_mom}--\eqref{eq:stat_prob_microinertia} starting from $X_n^0$ exit the ball $B(X_{eq},\delta)$ in finite time, depending on $n$.  

	Note that in \fref{Theorem}{thm:main_result} the pressure has disappeared from consideration. This is because the pressure plays only an auxiliary role in the equations
	and may be eliminated from \eqref{eq:stat_prob_lin_mom} by projecting onto the space of divergence-free vector fields.


\section{Background, preliminaries, and discussion}
\label{sec:discussion}

\subsection{Micropolar fluids}
\label{sec:micropolar}
	To the best of our knowledge, the anisotropic micropolar fluid model has not been studied in the PDE literature, so our aim in this subsection is to provide the reader with a brief overview of the model and its features.  We emphasize that it is a natural extension of the Navier-Stokes model, as it follows from the same principles of rational continuum mechanics. We refer to \cite{erigen_vol_1, erigen_vol_2} for a complete continuum mechanics derivation of the micropolar fluid model, and we refer to \cite{lukaszewicz_book} for a thorough discussion of the mathematical analysis of \emph{isotropic} micropolar fluids.  Throughout this discussion we will take the domain under consideration to be the (normalized) torus $\T^3 = \R^3 / {\brac{2\pi\Z}}^3$ and we will let $T\in\ocbrac{0,+\infty}$ denote our time horizon.  For the sake of brevity, in this subsection we will commit the usual crime of assuming all quantities are ``sufficiently regular'' to justify the written assertions. 

	Just as rational continuum mechanics begins with the postulation that there exists some flow map $\eta : \brac{0,T} \times \T^3 \to \T^3$ which describes the motion of the continuum,
	the micropolar theory posits the existence of an additional (Lagrangian) microrotation map $Q : \brac{0,T} \times \T^3 \to SO\brac{3}$
	which describes the rotation of the microstructure present at every point in the continuum.
	The pair $\brac{\eta, Q}$ thus provides a complete kinematic description of a micropolar continuum as illustrated in \fref{Figure}{fig:micro_structure}.

	\begin{figure}
		\centering
		\includegraphics{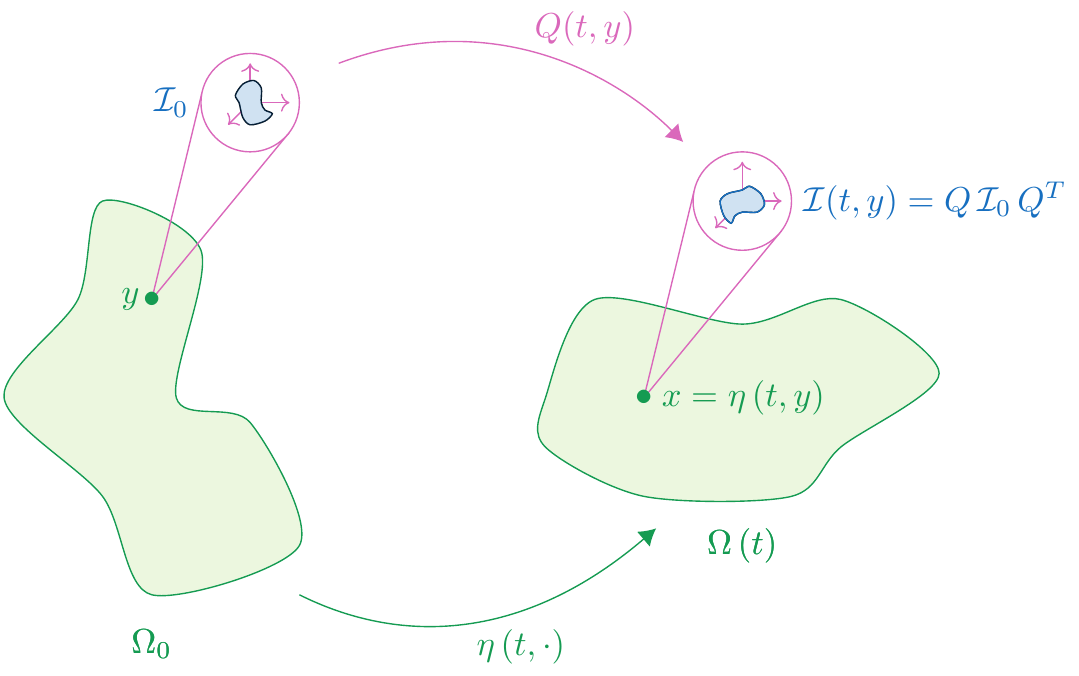}
		\caption[
			A depiction of how a subset of the micropolar continuum behaves under the joint flow of the flow map and the microrotation map.
		]{
			A depiction of how a subset $\Omega_0 \subseteq \T^3$ of the micropolar continuum behaves under the flow of $\eta$ and $Q$.
			$\Omega\brac{t} = \eta\brac{t, \Omega_0}$ is the image of $\Omega_0$ under the flow of $\eta$ and
			$y\in\Omega_0$ is a point in $\Omega_0$ at which the micropolar continuum has microinertia $\mathcal{I}_0$.
			At the point $x = \eta\brac{t, y}$ the microinertia is $\mathcal{I}\brac{t,y} = Q\brac{t} \,\mathcal{I}_0\, Q^t \brac{t}$
			since the microinertia transforms as a 2-tensor under the flow of the microrotation $Q$.
		}
		\label{fig:micro_structure}
	\end{figure}

	A word of warning: there are two ways to define the microrotation map and we have chosen here the convention that $Q$ is \emph{absolute}.
	Indeed, one may either define $Q$ to be the rotation of the microstructure with respect to its immediate environment,
	in which case $Q$ would be equal to the identity when the micropolar continuum undergoes rigid motions such as rotations,
	or one may define $Q$ to be the identity at time $t = 0$ and to be the absolute rotation underwent by the micropolar continuum thereafter.
	We choose the latter convention.
	In order to illustrate the physical interpretation of the microrotation map $Q$,
	\fref{Table}{tab:micro_rot_abs} contrasts the motions obtained for various simple expressions of $\eta$ and $Q$ 

	\begin{table}
		\centering
		\begin{tabular}[c]{cccc}
			\toprule
			Configuration at time $t=0$		&\multicolumn{3}{c}{\begin{tabular}{c}\includegraphics[width=0.2\textwidth]{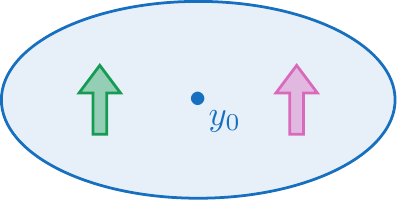}\end{tabular}}\\
			\toprule
								& Case 1			& Case 2			& Case 3		\\
			\toprule
			$\eta\brac{t,\cdot}$			& $e^{tR}\brac{\cdot - y_0}$	& $e^{tR}\brac{\cdot-y_0}$	& $I$			\\
			$Q\brac{t,y}$				& $e^{tR}$			& $I$				& $e^{tR}$		\\
			$u\brac{t,x}$				& $Rx$				& $Rx$				& $0$			\\
			$\frac{1}{2} \nabla\times u\brac{t,x}$	& $e_3$				& $e_3$				& $0$			\\
			$\omega\brac{t,x}$			& $e_3$				& $0$				& $e_3$			\\
			$\frac{1}{2}\nabla\times u - \omega$	& $0$				& $e_3$				& $-e_3$		\\
			\midrule
			Configuration at time $t = \frac{\pi}{2}$
				&\begin{tabular}{c}\includegraphics[width=0.1\textwidth]{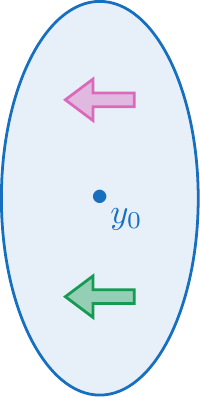}\end{tabular}
				&\begin{tabular}{c}\includegraphics[width=0.1\textwidth]{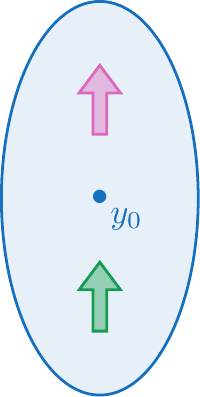}\end{tabular}
				&\begin{tabular}{c}\includegraphics[width=0.2\textwidth]{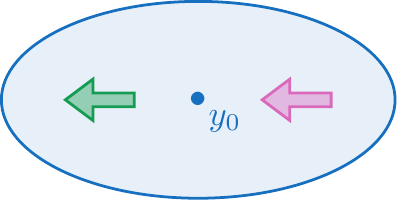}\end{tabular}\\
			\bottomrule
		\end{tabular}
		\caption[
			Three explicit examples of the motion of a micropolar continuum with the same initial configuration.
			These motions are chosen to be similar to emphasize that the microrotation $Q$ is an \emph{absolute} rotation.
		]{
			Three explicit examples of the motion of a micropolar continuum with the same initial configuration.
			These motions are chosen to be similar to emphasize that the microrotation $Q$ is an \emph{absolute} rotation.
			The figures shown correspond to cross-sections perpendicular to $e_3$,
			each colored arrow is a depiction of the orientation of the microstructure at that point,
			$y_0$ is some point in the micropolar continuum,
			and $R = e_2 \otimes e_1 - e_1 \otimes e_2$ corresponds to a (counter-clockwise) rotation by $\pi/2$ in the plane perpendicular to $e_3$.
		}
		\label{tab:micro_rot_abs}
	\end{table}

	Analogously to how the flow map $\eta$ is more conveniently characterized by its Eulerian velocity $u$,
	the microrotation map $Q$ is characterized by its Eulerian angular velocity $\omega$ where
	\begin{equation*}
		u(t,\,\cdot\,) = \pdt\eta(t,\,\cdot\,) \circ {\eta(t,\,\cdot\,)}\inv
		\text{ and }
		\omega(t,\,\cdot\,) = \vc \pdt Q(t,\,\cdot\,) Q^T (t,\,\cdot\,) \circ {\eta(t,\,\cdot\,)}\inv
	\end{equation*}
	and
	\begin{equation}
	\label{eq:def_vec}
		{\brac{\vc M}}_i = \frac{1}{2} \varepsilon_{aib} M_{ab} \text{ for any 3-by-3 matrix } M.
	\end{equation}
	Recall that the Levi-Civita symbol $\varepsilon_{ijk}$ is defined to be the sign of the permutation which maps $1 \mapsto i$, $2 \mapsto j$, and $3 \mapsto k$.
	Note that here, since $Q\in SO(3)$ we know that $\Omega(t) = \pdt Q(t) Q^T (t) \circ {\eta(t)}\inv$ is antisymmetric,
	and hence we may use the standard identification of a 3-by-3 antisymmetric matrix $A$ with a vector $a = \vc A$
	via $Av = a \times v$ for any $v\in\R^3$, where $\times$ denotes the usual cross product in $\R^3$.

	The derivation of the equations of motion for a micropolar continuum begins by postulating the conservation of mass, the balance of linear momentum, and the balance of angular momentum.  For micropolar continua the angular momentum is the sum of the macroscopic angular momentum, obtained from the fluid velocity $u$ and a choice of reference point in space, and the microscopic angular momentum $J \omega$.  Additionally, micropolar fluids conserve microinertia, which means that the Lagrangian microinertia $\mathcal{I}(t,\,\cdot\,) = J\circ\eta(t,\,\cdot\,)$ satisfies
	$\mathcal{I}(t,\,\cdot\,) = Q(t,\,\cdot\,) \, \mathcal{I}(0,\,\cdot\,) \, Q^T(t,\,\cdot\,)$.
	Differentiating in time yields $\pdt\mathcal{I} = \sbrac{\pdt Q Q^T, \mathcal{I}}$, where $\sbrac{\cdot,\cdot}$ denotes the matrix commutator.  We may rewrite this in Eulerian coordinates as
	\begin{equation}
	\label{eq:cons_microinertia}
		\pdt J + \brac{u\cdot\nabla} J = \sbrac{\Omega, J}.
	\end{equation}

	Note that a microinertia is \emph{physical} when its spectrum $\cbrac{\lambda_1, \lambda_2, \lambda_3}$ satisfies $\lambda_i \leqslant \frac{1}{2} \sum_{j=1}^3 \lambda_j = \frac{1}{2} \tr J$
	for $i=1,2,3$.
	This comes from the fact that we may compute the microinertia tensor of a rigid body of mass $M$ from the covariance matrix $V$ of its mass distribution via $J = M\brac{\brac{\tr V}I - V}$.
	The condition above on the eigenvalues of $J$ is then equivalent to requiring the physical condition that $V$ be positive semi-definite.

	For incompressible continua with constant density the conservation of mass reduces to the divergence-free condition
	\begin{equation}
	\label{eq:incompressibility}
		\nabla\cdot u = 0.
	\end{equation}
	Using \eqref{eq:cons_microinertia},
	the conservation of linear and angular momentum then respectively take the form
	\begin{equation}
	\label{eq:cons_lin_mom}
		\pdt u + \brac{u\cdot\nabla}u
		= \nabla\cdot T + f
	\end{equation}
	and
	\begin{equation}
	\label{eq:cons_ang_mom}
		\brac{\pdt + u\cdot\nabla} \brac{J\omega}
		= J\brac{\pdt \omega + \brac{u\cdot\nabla} \omega} + \omega\times J\omega
		= 2 \vc T + \nabla\cdot M + g
	\end{equation}
	where $T$ is the Cauchy stress tensor which expresses the internal forces exerted by the continuum on itself,
	$M$ is the couple stress tensor which expresses the internal microtorques exerted by the continuum on itself (and on its microstructure),
	and where $f$ and $g$ are the external forces and microtorques acting on the continuum, respectively.

	To close the system we continue along the path of rational mechanics which produces Navier-Stokes
	and postulate that some constitutive equations hold which determine the stresses $T$ and $M$
	in terms of the velocity $u$, the angular velocity $\omega$, and the pressure $p$.
	Analogously to how a Newtonian fluid is defined as a continuum for which the stress tensor is given by $T = \mu\symgrad u - pI$,
	a micropolar fluid is defined as a micropolar continuum for which
	\begin{equation}
	\label{eq:constitutive_equations}
		T = \mu\symgrad u - pI + \kappa\ten\brac{ \frac{1}{2} \nabla\times u - \omega}
		\text{ and }
		M = \alpha\brac{\nabla\cdot\omega}I + \beta\symgrad^0 \omega + \gamma\ten\nabla\times\omega,
	\end{equation}
	where:	$\symgrad$ denotes the symmetrized gradient defined by $\symgrad v = \nabla v + {\nabla v}^T$,
		$\ten$ is the inverse of $\vc$ introduced in \eqref{eq:def_vec} such that $\ten\brac{v}w = v\times w$ for every $v,w\in\R^3$,
		$\symgrad^0$ is the trace-free part of the symmetrized gradient defined by $\symgrad^0 v = \symgrad v - \frac{2}{3} \brac{\nabla\cdot v}I$, and
		$\mu$, $\kappa$, $\alpha$, $\beta$, $\gamma$ are physical constants commonly referred to as \emph{fluid viscosities}.  
	Note that, by contrast with classical fluids, the stress tensor $T$ is \emph{not} symmetric.

	The terms in $M$ are analogous to the terms one finds in the viscous stress tensor for a compressible fluid and have a similar physical interpretation.
	The most interesting novelty in the micropolar model is the coupling term $\kappa\ten\brac{ \frac{1}{2} \nabla\times u - \omega}$.
	It serves to induce a stress when there is a mismatch between the local rotation induced by the flow map and the rotation of the microstructure:
	see \fref{Table}{tab:micro_rot_abs} for some examples.
	Note that the coupling term  is not symmetric, and so it spoils the usual symmetry enjoyed by the stress tensor in standard continuum models.
	
	Finally, thermodynamical considerations, and in particular the Clausius-Duhem inequality, tell us that the quadratic form given by the dissipation
	\begin{equation*}
		T:\brac{\nabla u - \Omega} + M:\nabla\omega =
			\frac{\mu}{2} \abs{\symgrad u}^2
			+ 2 \kappa {\vbrac{\half\nabla\times u - \omega}}^2
			+ \alpha\abs{\nabla\cdot\omega}^2
			+ \frac{\beta}{2} \abs{\symgrad^0 \omega}^2
			+ 2 \gamma \abs{\nabla\times\omega}^2
	\end{equation*}
	must be positive-semidefinite, from which it follows that $\mu, \kappa, \alpha, \beta, \gamma \geqslant 0$.
	Note that in this paper we require that
	\begin{equation}
	\label{eq:positivity_requirement_on_viscosity_constants}
		\mu,\, \kappa,\, \alpha + \frac{4\beta}{3},\, \beta + \gamma > 0.
	\end{equation}
	In particular $\mu$ and $\kappa$ must be strictly positive but some of $\alpha$, $\beta$, and $\gamma$ may vanish.
	More precisely: if $\beta > 0$ then we allow $\alpha = \gamma = 0$ and if $\alpha, \gamma > 0$ then we allow $\beta = 0$.
	This requirement comes from the fact that
	\begin{equation*}
		\nabla\cdot M = \brac{\alpha + 4\beta/3} \nabla\brac{\nabla\cdot\omega} + \brac{\beta + \gamma} \brac{\Delta\omega - \nabla\brac{\nabla\cdot\omega}}
	\end{equation*}
	where the symbol of $\nabla\nabla\,\cdot\,$ is $-\abs{k}^2\proj_k$ and the symbol of $\Delta - \nabla\nabla\,\cdot\,$ is $-\abs{k}^2\proj_{k^\perp}$,
	therefore the contribution of the dissipation coming from $M$ is
	\begin{equation*}
		\int_{\T^3} \brac{\nabla\cdot M}\cdot\omega
		= \sum_{k\in\Z^3} -\abs{k}^2 \brac{
			\brac{\alpha + 4\beta/3} \abs{\proj_k \hat{\omega}}^2
			+ \brac{\beta+\gamma} \abs{\proj_{k^\perp} \hat{\omega}}^2
		}.
	\end{equation*}
	This dissipative term will then control $\norm{\nabla\omega}{L^2}$ precisely when $\alpha + 4\beta/3,\, \beta+\gamma > 0$.

	Putting \eqref{eq:cons_microinertia}, \eqref{eq:incompressibility}, \eqref{eq:cons_lin_mom}, and \eqref{eq:cons_ang_mom} together with \eqref{eq:constitutive_equations}
	yields \eqref{eq:stat_prob_lin_mom}--\eqref{eq:stat_prob_microinertia} when the external forces are taken to vanish and when the external microtorques are taken to be constant,
	namely $g = \tau e_3$ for some fixed $\tau > 0$.
	Note also that, for simplicity, we have defined $\tilde{\mu} = \mu + \kappa/2$, $\tilde{\alpha} = \alpha + 4\beta/3$, and $\tilde{\gamma} = \beta + \gamma$
	in \eqref{eq:stat_prob_lin_mom}--\eqref{eq:stat_prob_microinertia}.

	It is worth noting that this system is equivariant under Galilean transformations.
	More precisely: if $\brac{u, p, \omega, J}$ is a sufficiently regular solution of \eqref{eq:stat_prob_lin_mom}--\eqref{eq:stat_prob_microinertia}
	then $u_\text{avg} \defeq \fint_{\T^3} u$ is constant in time and
	\begin{equation*}
		\brac{0,T} \times \T^3 \ni \brac{t, y} \mapsto \brac{u - u_\text{avg}, p, \omega, J} \brac{t, y + u_\text{avg}}
	\end{equation*}
	also satisfies \eqref{eq:stat_prob_lin_mom}--\eqref{eq:stat_prob_microinertia}.
	We may therefore assume without loss of generality that $u$ has average zero at all times.  Similarly, since the pressure only appears in the equations with a gradient, we are free to posit that $p$ has average zero for all times.

\subsection{Previous work}
\label{sec:prev_work}
	
	Micropolar fluids have been extensively studied by the continuum mechanics community over the last fifty years and an exhaustive literature review is beyond the scope of this paper.  We restrict our attention to the mathematics literature here, in which case, to the best of our knowledge all results relate to  \emph{isotropic} microstructure, where the microinertia $J$ is a scalar multiple of the identity. In that case the precession term $\omega\times J\omega$ from \eqref{eq:stat_prob_ang_mom} vanishes
	and the entire equation \eqref{eq:stat_prob_microinertia} trivializes.  
	Note that in two dimensions the micro-inertia is a scalar, and therefore all micropolar fluids are isotropic.
	
	In two dimensions the problem is globally well-posed, as per \cite{lukaszewiscz_01} where global well-posedness and qualitative results on the long-time behaviour are obtained.  Some quantitative information on long-time behaviour is also known in two dimensions: for example, decay rates are obtained in \cite{dong_chen}.  The situation is more delicate in three dimensions, which is an unsurprising assertion in the setting of viscous fluids.  The first discussion of well-posedness in three dimensions is due to Galdi and Rionero \cite{galdi_rionero}. {\L}ukaszewicz then obtained weak solutions in \cite{lukaszewicz_90}
	and uniqueness of strong solutions in \cite{lukaszewicz_89}.
	More recent work has established global well-posedness for small data in critical Besov spaces \cite{chen_miao},
	in Besov-Morrey spaces \cite{ferreira_precioso}, and
	in the space of pseudomeasures \cite{ferreira_villamizar_roa},
	as well as derived blow-up criteria  \cite{yuan}.
	There is also an industry devoted to the study of micropolar fluids when one or more of the viscosity coefficients vanishes:  we refer to \cite{dong_zhang} for an illustrative example.

	Various extensions of the incompressible micropolar fluid model considered here have been studied.  For example, compressible models \cite{liu_zhang}, models coupled to heat transfer \cite{tarasinska, kalita_langa_lukaszewicz}, and models with coupled magnetic fields \cite{ahmadi_shahinpoor, rojas_medar_marko} have all been studied.  Again, to the best of our knowledge all of these works consider \emph{isotropic} micropolar fluids.

\subsection{Equilibria}
\label{sec:equilibria}

	In this section we describe the two classes of equilibria which arise as particular solutions of
	\eqref{eq:stat_prob_lin_mom}--\eqref{eq:stat_prob_microinertia}.
	A critical piece of this description is the following energy-dissipation relation:
	\begin{align}
		\Dt \int_{\T^3} \frac{1}{2} \abs{u}^2 &+ \frac{1}{2} J \brac{\omega - \omega_{eq}}\cdot\brac{\omega - \omega_{eq}} - \frac{1}{2} J \omega_{eq}\cdot\omega_{eq}
		\label{eq:relative_energy_dissipation_relation}
		\\
		&= - \int_{\T^3}
			\frac{\mu}{2} \abs{\symgrad u}^2
			+ 2 \kappa {\vbrac{\half\nabla\times u - \brac{\omega-\omega_{eq}}}}^2
			+ \alpha\abs{\nabla\cdot\omega}^2
			+ \frac{\beta}{2} \abs{\symgrad^0 \omega}^2
			+ 2 \gamma \abs{\nabla\times\omega}^2,
		\nonumber
	\end{align}
	where recall that $\omega_{eq} = \frac{\tau}{2\kappa} e_3$.
	This energy-dissipation relation is obtained by testing \eqref{eq:stat_prob_lin_mom} and \eqref{eq:stat_prob_ang_mom} against $u$ and $\omega-\omega_{eq}$ respectively and integrating by parts.
	For a full derivation, see \fref{Appendix}{sec:deriv_relative_energy_dissipation_relation}.
	With the relation \eqref{eq:relative_energy_dissipation_relation} in hand we may define two classes of equilibria.
	\begin{definition}
	\label{def:classes_of_equilibria}
		We say that a solution $\brac{u, p, \omega, J}$ of \eqref{eq:stat_prob_lin_mom}--\eqref{eq:stat_prob_microinertia} is an \emph{equilibrium} if $\pdt\brac{u, p, \omega, J} = 0$
		and we say that it is an \emph{energetic equilibrium} if $\Dt \mathcal{E}_\text{rel} = 0$ where the relative energy $\mathcal{E}_\text{rel}$
		is given as in \eqref{eq:relative_energy_dissipation_relation} by
		\begin{equation}
		\label{eq:relative_energy}
			\mathcal{E}_\text{rel} \brac{u, p, \omega, J}
			= \int_{\T^3} \frac{1}{2} \abs{u}^2 + \frac{1}{2} J \brac{\omega - \omega_{eq}} \cdot \brac{\omega - \omega_{eq}} - \frac{1}{2} J \omega_{eq} \cdot \omega_{eq}.
		\end{equation}
	\end{definition}

	There are two reasons why one might study the energetic equilibria introduced in \fref{Definition}{def:classes_of_equilibria}:
	(1)~they arise naturally as the stationary points of a Lyapunov functional and
	(2)~we believe that they play an essential role in characterizing the long-time behaviour of the system.

	We justify (1) now and postpone the justification of (2) until after the identification of the various equilibria is carried out in \fref{Proposition}{prop:identifying_eq}.
	Since the relative energy $\mathcal{E}_\text{rel}$ is both non-increasing in time and bounded below we may indeed view it as a Lyapunov functional.
	The observation that $\Dt\mathcal{E}_\text{rel} \leqslant 0$ follows immediately from \eqref{eq:relative_energy_dissipation_relation}
	and the boundedness from below of $\mathcal{E}_\text{rel}$ follows from the fact that the spectrum of the microinertia $J$ is invariant over time.

	More precisely: as described in \fref{Section}{sec:micropolar}, the conservation of microinertia for a \emph{homogeneous} micropolar fluid means that
	there exists some reference microinertia $J_\text{ref}$ to which $J(t,x)$ is similar at all times $0 \leqslant t < T$ and at every point $x\in\T^3$.
	Denoting by $\lambda_\text{max}$ the largest eigenvalue of $J_\text{ref}$ it follows that the only non-positive term in $\mathcal{E}_\text{rel}$ is bounded below:
	$-J\omega_{eq}\cdot\omega_{eq} \geqslant - \lambda_\text{max} \abs{\omega_{eq}}^2$, and hence $\mathcal{E}_\text{rel}$ itself is bounded below.

	We now identify all of the (sufficiently regular) equilibria which belong to each class as defined in \fref{Definition}{def:classes_of_equilibria}.
	Recall that we are considering a homogeneous micropolar fluid whose microstructure has an inertial axis of symmetry,
	which means that there are physical constants $\lambda, \nu > 0$ such that the microinertia has spectrum $\{\lambda, \lambda, \nu\}$.
	In particular this microinertia tensor is \emph{physical} precisely when $2\lambda \geqslant \nu \geqslant 0$.
	We will assume thereafter that strict inequalities hold, i.e. $2\lambda > \nu > 0$.
	This assumptions means that the microstructure is not degenerate, in the sense that it corresponds to a genuinely three-dimensional rigid body
	(as opposed to a degenerate rigid body which would be lower-dimensional, e.g. because it is flat in one or more directions).

	\begin{prop}
	\label{prop:identifying_eq}
		Let $\brac{u, p, \omega, J}$ be a sufficiently regular solution of \eqref{eq:stat_prob_lin_mom}--\eqref{eq:stat_prob_microinertia} where $u$ has average zero.
		\begin{enumerate}
			\item	If $\brac{u, p, \omega, J}$ is an equilibrium then $u=0$, $p=0$, $\omega = \omega_{eq} = \frac{\tau}{2\kappa}e_3$, and
				$J = \diag ( \lambda, \lambda, \nu ) = \lambda I_2 \oplus \nu$.
			\item	If $\brac{u, p, \omega, J}$ is an energetic equilibrium then either it is an equilibrium or $u=0$, $p=0$, $\omega = \omega_{eq}$, and
				$J = e^{t\frac{\tau}{2\kappa}R} \bar{J}(0) e^{-t\frac{\tau}{2\kappa}R} \oplus \lambda$
				where $R = \begin{pmatrix} 0 & -1 \\ 1 & 0 \end{pmatrix}$ and where the spectrum of $\bar{J}(0)$ is $\{\lambda, \nu\}$.
				Here `$\oplus\!$' denotes the direct sum of two linear operators, see Section \ref{sec:notation} to recall the precise definition.
		\end{enumerate}
	\end{prop}
	In simpler words \fref{Proposition}{prop:identifying_eq} says that for both equilibria and energetic equilibria the microstructure rotates in the direction of the imposed microtorque,
	with one crucial difference: the unique equilibrium corresponds to the inertial axis of symmetry of the microstructure being \emph{aligned} with the microtorque,
	giving rise to a \emph{constant} microinertia, whilst the energetic equilibria consist of an orbit where the inertial axis of symmetry rotates in the plane \emph{perpendicular} to the microtorque,
	giving rise to a \emph{periodic} microinertia (with period $4\pi\kappa / \tau$).

	\begin{proof}[Proof of \fref{Proposition}{prop:identifying_eq}]
			Since equilibria are energetic equilibria we begin by supposing that $\brac{u, p, \omega, J}$ is an energetic equilibrium.
			It follows from the energy-dissipation relation \eqref{eq:relative_energy_dissipation_relation} that the dissipation vanishes, i.e.
			\begin{equation*}
				\int_{\T^3}
					\frac{\mu}{2} \abs{\symgrad u}^2
					+ 2 \kappa {\vbrac{\half\nabla\times u - \brac{\omega-\omega_{eq}}}}^2
					+ \alpha\abs{\nabla\cdot\omega}^2
					+ \frac{\beta}{2} \abs{\symgrad^0 \omega}^2
					+ 2 \gamma \abs{\nabla\times\omega}^2
				= 0.
			\end{equation*}
			In particular: $\omega$ is constant and $u$ has constant curl.
			Coupling this with the fact that $u$ is divergence-free we deduce that $u$ is harmonic.
			Since $u$ has average zero, it follows that $u=0$, and hence that $p=0$ (recall that we require $p$ to have average zero) and $\omega = \omega_{eq}$.

			So now we know from \eqref{eq:stat_prob_ang_mom} that the precession term $\omega\times J\omega = {\brac{ \frac{\tau}{2\kappa} }}^2 e_3 \times J e_3$ vanishes,
			and hence $J$ has the block form $J = \bar{J} \oplus J_{33}$ for some 2-by-2 matrix $\bar{J}$.
			The conservation of microinertia \eqref{eq:stat_prob_microinertia} now becomes the ODE $\pdt J = \sbrac{\ten\omega_{eq}, J} = \frac{\tau}{2\kappa} \sbrac{R,\bar{J}\,} \oplus 0$
			which may be solved explicitly to yield $\bar{J}(t) = e^{t\frac{\tau}{2\kappa}R} \bar{J}(0) e^{-t\frac{\tau}{2\kappa}R}$ and $J_{33} (t) = J_{33} (0)$.

			There are now two cases to consider: either $\bar{J}$ has a repeated eigenvalue $\lambda$ or $\bar{J}$ has distinct eigenvalues $\lambda$ and $\nu$.
			Since $e^{t\frac{\tau}{2\kappa}R} \bar{J}(0) e^{-t\frac{\tau}{2\kappa}R}$ is constant in time if and only if $\bar{J}(0)$, and hence $\bar{J}(t)$, has a repeated eigenvalue,
			the result follows.
	\end{proof}

	As the next section suggests, we believe that the global attractors of \eqref{eq:stat_prob_lin_mom}--\eqref{eq:stat_prob_microinertia}
	may be characterized in terms of the equilibrium and the orbit of energetic equilibria. This is summarized in the conjecture below,
	which is the second reason why energetic equilibria are worthy of attention.
	\begin{conjecture}
	\label{conjecture} $\text{}$
	\begin{enumerate}
		\item	If the microstructure is inertially oblong, i.e. $\lambda > \nu$, then the orbit of energetic equilibria identified in \fref{Proposition}{prop:identifying_eq}
			is the global attractor of the system \eqref{eq:stat_prob_lin_mom}--\eqref{eq:stat_prob_microinertia}.
		\item	If the microstructure is inertially oblate, i.e. $\lambda < \nu$, then the equilibrium identified in \fref{Proposition}{prop:identifying_eq}
			is the global attractor of the system \eqref{eq:stat_prob_lin_mom}--\eqref{eq:stat_prob_microinertia}.
	\end{enumerate}	
	\end{conjecture}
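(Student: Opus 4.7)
The plan rests on viewing the relative energy $\mathcal{E}_\text{rel}$ from Definition \ref{def:classes_of_equilibria} as a Lyapunov functional. By \eqref{eq:relative_energy_dissipation_relation}, $\mathcal{E}_\text{rel}$ is non-increasing along trajectories and, as the paper observes, bounded below, so a LaSalle-type invariance principle should localize every $\omega$-limit set of trajectories inside the zero-dissipation set, which by Proposition \ref{prop:identifying_eq} consists exactly of the stationary equilibrium $X_{eq}$ together with the $S^1$-orbit $\mathcal{O}$ of precessing energetic equilibria. To make this rigorous I would first need a global solution theory for \eqref{eq:stat_prob_lin_mom}--\eqref{eq:stat_prob_microinertia} with enough compactness on $t \mapsto (u,\omega,J)(t)$ to support the invariance principle: a Leray--Hopf-style weak theory, together with the $L^\infty$ bound on $J$ coming from the conservation of its spectrum, seems the natural substitute in the absence of a global strong theory (which would encounter the usual three-dimensional Navier--Stokes large-data regularity question).

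Granted such a framework, LaSalle would force the global attractor $\mathcal{A}$ to lie in $\{X_{eq}\} \cup \mathcal{O}$, and the central task becomes determining which piece attracts, depending on the sign of $\lambda - \nu$. For part (2), oblate ($\nu > \lambda$), the plan is to linearize around $X_{eq}$ and check that the generator has spectrum strictly in the left half-plane modulo the trivial zero modes coming from the Galilean and pressure-average normalizations, yielding local exponential stability via a standard semigroup argument. Complementarily, one linearizes around $\mathcal{O}$ and should exhibit a spectrally unstable direction precisely when $\nu > \lambda$; upgrading this to nonlinear instability via a Hadamard-style bootstrap in the spirit of Theorem \ref{thm:main_result} would rule out $\mathcal{O}$ as an attractor and, together with LaSalle, force $\mathcal{A} = \{X_{eq}\}$. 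Part (1), oblong ($\lambda > \nu$), is mirrored: Theorem \ref{thm:main_result} already excludes $X_{eq}$, so it remains to establish normal hyperbolicity of $\mathcal{O}$ (with the neutral direction being the $S^1$-symmetry of the orbit itself) by linearization and to upgrade to nonlinear attraction, for example via invariant-manifold or center-manifold techniques around the orbit.

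The dominant obstacle, beyond the three-dimensional global regularity question, is that the microinertia equation \eqref{eq:stat_prob_microinertia} is purely transport and is not itself dissipative, so the relaxation of $J$ to its asymptotic value is slaved to the dissipative relaxation of $u$ and $\omega - \omega_{eq}$ through the commutator $[\Omega, J]$. Translating this slaving into the precompactness of trajectories that LaSalle requires, particularly for part (1) where the target attractor is a full $S^1$-orbit rather than a point, seems substantially more delicate than the instability analysis carried out in this paper, as it would force one to track not only the shape but also the long-time phase of the precession. I expect this combination of purely transport dynamics for $J$ and the need to handle nonlinear stability and instability around an entire symmetry orbit is exactly what keeps these statements at the level of a conjecture.
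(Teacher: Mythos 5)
The statement you are addressing is a conjecture, and the paper does not prove it -- it offers only the informal heuristic discussion of Section~\ref{sec:heuristics_long_time_behaviour}. So there is no ``paper's proof'' to compare against; what the paper gives is a short plausibility argument, what you give is a slightly more formalized program together with a candid list of obstacles, and both stop short of a proof. Your diagnosis of why it remains open is essentially the correct one.

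Where your plan departs from the paper's heuristic is in the mechanism for selecting between the two pieces of the zero-dissipation set. The paper posits that $\mathcal{E}_\text{rel}$ actually converges to its \emph{absolute minimum} as $t\to\infty$, and reads off the dichotomy directly from that minimization: the only non-sign-definite term forces $J_{33}\to\max(\lambda,\nu)$, so in the oblong case $J_{33}\to\lambda$ leaves $\bar{J}$ with the distinct eigenvalues $\{\lambda,\nu\}$ (the orbit $\mathcal{O}$), while in the oblate case $J_{33}\to\nu$ leaves $\bar{J}$ with the repeated eigenvalue $\lambda$ (the point $X_{eq}$). You instead invoke LaSalle, which localizes the $\omega$-limit set inside $\{X_{eq}\}\cup\mathcal{O}$ but does not by itself decide which piece is attracting, and you supply a separate selection step through a stability/instability analysis of each piece. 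That is a coherent alternative, but note that LaSalle is strictly weaker than the energy-minimization step at this juncture, so you take on an additional argument (spectral analysis about $\mathcal{O}$, normal hyperbolicity, orbital stability) that the paper's heuristic sidesteps. Of course the paper's route has its own unjustified assumption, namely that $\mathcal{E}_\text{rel}$ reaches its infimum, so neither is close to rigorous; but the two selection mechanisms are genuinely different, and for part~(2) the spectral route you sketch is arguably the more tractable one since it targets a point rather than an orbit.

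Your list of obstacles matches the ones the paper tacitly concedes: the absence of a satisfactory large-data global theory in three dimensions, the fact that the micro-inertia equation is pure transport with no dissipation of its own so that its long-time behavior is entirely slaved through $[\Omega,J]$ to the dissipative unknowns, and -- specific to part~(1) -- that the target is a full $S^1$-orbit rather than a fixed point, so one needs precompactness of trajectories modulo the $S^1$-action together with some form of orbital stability or normal hyperbolicity, all against a non-self-adjoint linearization. These are precisely the missing ingredients, and your proposal is correctly calibrated in presenting itself as a program rather than a proof.
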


	We note that attractors have been obtained in previous works in the context of two-dimensional isotropic micropolar fluids \cite{chen_chen_dong, lukaszewicz_tarasinska}.

	A depiction of the equilibrium and the energetic equilibria configurations of the microstructure can be found in \fref{Figure}{fig:conjecture},
	where we also label each configuration with its relevant conjectured long-time behaviour.

	\begin{figure}
		\centering
		\begin{subfigure}{0.24\textwidth}
			\centering
			\includegraphics{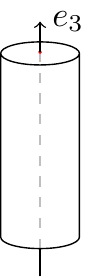}
			\caption{Unstable.}
			\label{fig:tube_rot_axis}
		\end{subfigure}
		\begin{subfigure}{0.24\textwidth}
			\centering
			\includegraphics{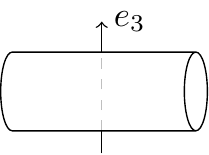}
			\caption{Globally attracting?}
			\label{fig:tube_rot_perp}
		\end{subfigure}
		\begin{subfigure}{0.24\textwidth}
			\centering
			\includegraphics{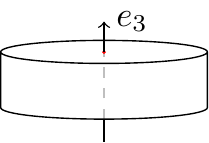}
			\caption{Globally attracting?}
			\label{fig:disk_rot_axis}
		\end{subfigure}
		\begin{subfigure}{0.24\textwidth}
			\centering
			\includegraphics{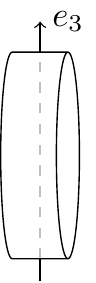}
			\caption{Unstable?}
			\label{fig:disk_rot_perp}
		\end{subfigure}
		\caption{
			Depictions of the microstructure for the equilibrium (A, C) and an energetic equilibrium (B, D)
			corresponding to both the oblong (A, B) and oblate cases (C, D).
			B and C are conjectured to be globally attracting for the oblong and oblate cases respectively,
			D is conjectured to be be unstable for the oblate case,
			and we prove in \fref{Theorem}{thm:main_result} that A is unstable.
		}
		\label{fig:conjecture}
	\end{figure}

\subsection{Heuristics for the long-time behaviour}
\label{sec:heuristics_long_time_behaviour}
	In this section we briefly discuss heuristics for the long-term behaviour of the system \eqref{eq:stat_prob_lin_mom}--\eqref{eq:stat_prob_microinertia}.
	The central element of the reasoning that follows is the energy-dissipation relation \eqref{eq:relative_energy_dissipation_relation}.
	As remarked in \fref{Section}{sec:equilibria}, this relation tells us that the relative energy $\mathcal{E}_\text{rel}$ defined in \eqref{eq:relative_energy}
	is non-increasing in time and bounded below. Let us therefore, for the sake of this discussion, assume that $\mathcal{E}_\text{rel}$ approaches its absolute minimum as time approaches $+\infty$.
	In particular this means that each term in $\mathcal{E}_\text{rel}$ approaches its absolute minimum, from which we deduce that $u$ approaches zero, $\omega$ approaches $\omega_{eq}$
	(since $J$ is strictly positive-definite at time $t=0$ and hence strictly positive-definite for all time), and $-J_{33}$ approaches $-\lambda_\text{max}$
	for $\lambda_\text{max}$ denoting the maximum eigenvalue of $J$, i.e. $\lambda_\text{max} = \max (\lambda, \nu)$.

	This last observation is precisely where the dichotomy between inertially oblong and inertially oblate microstructure comes in.
	If the microstructure is inertially oblong, i.e. $\lambda > \nu$, then $J_{33}$ approaches $\lambda$ which means that $\bar{J}$ must consist of the \emph{distinct} eigenvalues
	$\lambda$, $\nu$, and hence the global attractor is conjectured to be the orbit of energetic equilibria.
	If the microstructure is inertially oblate, i.e. $\nu > \lambda$, then $J_{33}$ approaches $\nu$ and hence $\bar{J}$ has repeated eigenvalues equal to $\lambda$,
	such that the global attractor is conjectured to be the equilibrium.

\subsection{Heuristics for the origin of the instability}
\label{sec:heuristics_instability}
	In this section we discuss heuristics for the origin of the instability of the system \eqref{eq:stat_prob_lin_mom}--\eqref{eq:stat_prob_microinertia}.
	Beyond being helpful heuristics that physically motivate the instability of the system, the ideas presented below actually form the core of our proof of the nonlinear instability.

	We begin with another energy-dissipation relation, which is associated with the linearization of the problem \eqref{eq:stat_prob_lin_mom}--\eqref{eq:stat_prob_microinertia} about its equilibrium.
	This relation is
	\begin{equation}
	\label{eq:energy_dissipation_of_the_linearization}
		\Dt \mathcal{E}_\text{lin}
		\defeq \Dt \int_{\T^3} \brac{
			\frac{1}{2} \abs{u}^2 + \frac{1}{2} J_{eq} \omega\cdot\omega - \frac{1}{2} \frac{1}{\lambda-\nu} {\brac{ \frac{\tau}{2\kappa} }}^2 \abs{a}^2
		}
		= - \mathcal{D} \brac{u, \omega - \omega_{eq}}
	\end{equation}
	where $a = \brac{J_{31}, J_{32}} = \brac{J_{13}, J_{23}}$ and where the dissipation $\mathcal{D}$ is given as in \eqref{eq:relative_energy_dissipation_relation} by
	\begin{equation*}
		\mathcal{D} \brac{u,\omega} = \int_{\T^3} 
			\frac{\mu}{2} \abs{\symgrad u}^2
			+ 2 \kappa {\vbrac{\half\nabla\times u - \omega}}^2
			+ \alpha\abs{\nabla\cdot\omega}^2
			+ \frac{\beta}{2} \abs{\symgrad^0 \omega}^2
			+ 2 \gamma \abs{\nabla\times\omega}^2.
	\end{equation*}
	Note that only part of the micro-inertia $J$ appears in \eqref{eq:energy_dissipation_of_the_linearization},
	namely $a = \brac{J_{31}, J_{32}}$ which corresponds to the products of inertia which describe the moment of inertia about the $e_1$-axis and $e_2$-axis,
	respectively, when the microstructure rotates about the $e_3$-axis.
	This is due to the fact that, as explained in detail in \fref{Section}{sec:block_structure}, the linearized problem can de decomposed into blocks which do not interact with one another.
	In particular the block governing the dynamics of $u$, $\omega$, and $a$ is the only block which produces non-trivial dynamics, and it is this block which gives rise to
	\eqref{eq:energy_dissipation_of_the_linearization}.

	Since the integrand of $\mathcal{E}_\text{lin}$ in \eqref{eq:energy_dissipation_of_the_linearization}, viewed as a quadratic form on $\brac{u, \omega, a}$,
	has negative directions precisely when the microstructure is inertially oblong, i.e. when $\lambda > \nu$,
	this suggests that the equilibrium is unstable in that case.

	We actually know a little bit more about the instability mechanism. If we denote by $M\brac{k}$, where $k\in\Z^3$, the symbol of the linearized operator about the equilibrium,
	then we can compute the spectrum of $M(0)$ explicitly and see that is has exactly two unstable eigenvalues, which come as a conjugate pair.
	An important point to note here is that the only nonzero components of the eigenvectors corresponding to this conjugate pair are the components corresponding to
	$a$ and $\bar{\omega}$, which denotes the horizontal components of $\omega$, i.e. $\bar{\omega} = \brac{\omega_1, \omega_2}$.
	It is thus precisely $a$ and $\bar{\omega}$ that are at the origin of the instability.

	This is particularly interesting since $M(0)$ is precisely (up to neglecting its components depending on $u$) the linearization of the ODE
	\begin{equation*}
		\left\{
		\begin{aligned}
			&J\, \frac{d\omega}{dt} + \omega\times J\omega = \tau e_3 - 2\kappa\omega\\
			&\frac{dJ}{dt} = \sbrac{\Omega, J}
		\end{aligned}
		\right.
	\end{equation*}
	about its equilibrium $\brac{\omega_{eq}, J_{eq}} = \brac{\frac{\tau}{2\kappa}e_3, \diag\brac{\lambda, \lambda, \nu}}$,
	where here $\omega$ and $J$ are only time-dependent.
	This ODE describes the rotation of a damped rigid body subject to a uniform torque, which tells us that instability of the system \eqref{eq:stat_prob_lin_mom}--\eqref{eq:stat_prob_microinertia}
	stems precisely from the instability of this ODE.

	Finally note that, although this ODE plays a key role in explaining the instability mechanism, it does not fully characterize it.
	To understand what we mean by this, recall that the linearization of the ODE about its equilibrium describes the evolution of the zero Fourier mode of the linearized PDE.
	However, the nonzero Fourier modes play a nontrivial role in the instability mechanism.
	Indeed numerics show that, depending on the physical regime, the most unstable mode (i.e. that giving rise to the eigenvalue with the largest positive real part)
	may or may not be the zero mode. This is shown in \fref{Figure}{fig:insta}.
	
	\begin{figure}
		\centering
		\begin{subfigure}{0.45\textwidth}
			\centering
			\includegraphics{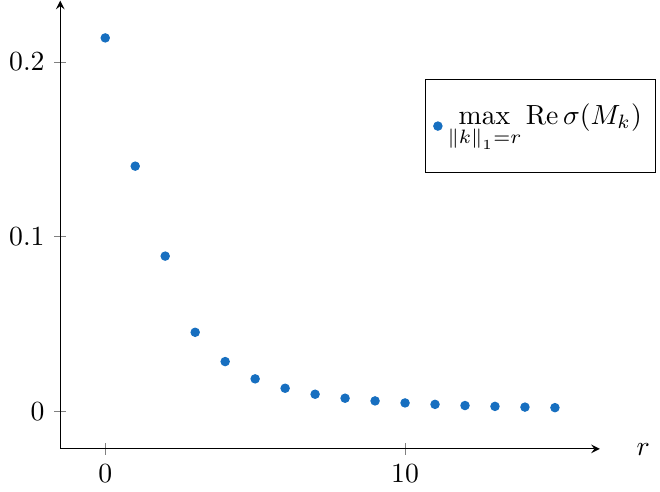}
			\caption{Physical parameters: $\lambda = 3.2$, $\nu = 0.6$, $\mu = 4.3$, $\kappa = 3.3$, $\alpha = 0.9$, $\beta = 6.8$, $\gamma = 0.4$, $\tau = 4.4$.}
			\label{fig:instaZero}
		\end{subfigure}
		\begin{subfigure}[h!]{0.45\textwidth}
			\centering
			\includegraphics{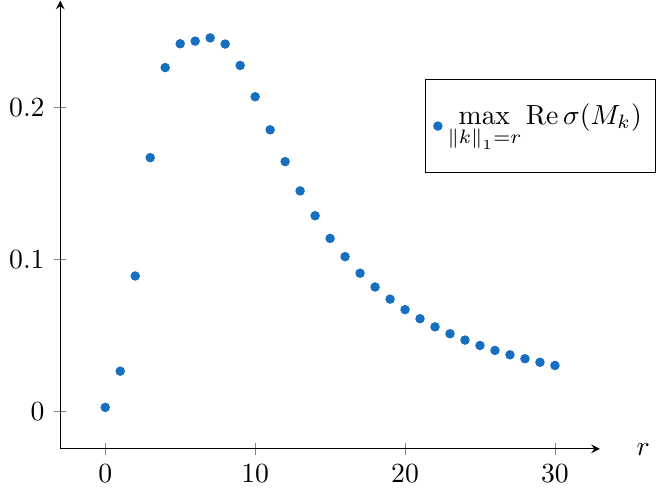}
			\caption{Physical parameters: $\lambda = 3.6$, $\nu = 1.2$, $\mu = 2.4$, $\kappa = 0.4$, $\alpha = 5.3$, $\beta = 3.1$, $\gamma = 1.7$, $\tau = 20$.}
			\label{fig:instaNonZero}
		\end{subfigure}
		\caption{
			An illustration of the fact that the instability is not exclusively due to the zero mode:
			depending on the physical parameter regime the eigenvalue with largest real part may or may not occur when $k=0$.
			Here $M_k$ denotes the symbol of the linearization of \eqref{eq:stat_prob_lin_mom}--\eqref{eq:stat_prob_microinertia} about the equilibrium.
		}
		\label{fig:insta}
	\end{figure}

\subsection{Summary of techniques and plan of paper}
\label{sec:difficulties}

	Our technique for proving \fref{Theorem}{thm:main_result} is to employ the nonlinear bootstrap instability framework first introduced by Guo-Strauss \cite{guo_strauss_bgk},
	which is not so much a black-box theorem as it is a strategy for proving instability.
	In broad strokes, the idea is to construct a maximally unstable solution to the linearized equations and then employ a nonlinear energy method to prove that this solution is nonlinearly stable,
	i.e. the nonlinear dynamics stay close to the linear growing mode, which then leads to instability.

	An essential feature of the Guo-Strauss bootstrap instability framework is that it does \emph{not} require the presence of a spectral gap,
	as is required for other standard methods used to prove nonlinear instability (see for example \cite{friedlander_strauss_vishik}).
	This is crucial for us since it is quite delicate to obtain spectral information about the problem at hand, as discussed in more detail below.
	In particular, note that \fref{Proposition}{prop:trajectories_evals} tells us that
	a pair of conjugate eigenvalues of the linearized operator approach the imaginary axis as the wavenumber approaches infinity.
	As an immediate consequence, we may thus deduce that there is no spectral gap.
	
	In order to implement the bootstrap instability strategy we need four ingredients.  The first is the maximally unstable linear growing mode.  This is a solution to the linearized equations (linearized around the equilibrium) that grows exponentially in time (when measured in various Sobolev norms) at a rate that is maximal in the sense that no other solution to the linearized equations grows more rapidly.  The second is a scheme of nonlinear energy estimates that allows us to obtain control of high-regularity norms of solutions to the nonlinear problems in terms of certain low-regularity norms.  This is the bootstrap portion of the argument.  The third is a low-regularity estimate of the nonlinearity in terms of the square of the high-regularity energy, valid at least in a \emph{small energy regime}.  Finally, we need a local existence theory for the nonlinear problem that is capable of producing solutions to which the bootstrap estimates apply.  With these ingredients in hand, we can then prove that the nonlinear solution stays sufficiently close to the growing linearized solution that it must leave a ball of fixed radius within a timescale computed in terms of the data.
	
	In Section \ref{sec:ana_lin_prob} we construct the maximally unstable solution to the linearized equations.  A principal difficulty is encountered immediately upon linearizing: the resulting (spatial) differential operator is not self-adjoint.  This is due entirely to the anisotropy of the microstructure, and in particular to the term $\omega \times J \omega$ in \eqref{eq:stat_prob_ang_mom}; indeed, in the case of isotropic microstructure this term vanishes and the linearized operator becomes self-adjoint.  The lack of self-adjointness means we have far fewer tools at our disposal, and in particular it means that we cannot employ variational methods to find the maximal growing mode.
	
	Since we work on the torus and the linearization is a constant coefficient problem, we are naturally led to seek the maximal solution in the form of a growing Fourier mode solution.  This leads to an ODE in $\C^8$ of the form $\partial_t \hat{X}_k = \hat{\B}_k \hat{X}_k$, where $k \in \Z^3$ is the wavenumber and $\hat{\B}_k \in \C^{8\times 8}$ is not Hermitian.  Without the precision tools associated to Hermitian matrices, we are forced to naively study the degree eight characteristic polynomial of $\hat{\B}_k$, which, due to the appearance of the physical parameters $\alpha,$ $\beta,$ $\gamma,$ $\kappa$, $\mu$, $\tau$, $\lambda,$ $\nu,$ in addition to the wave number $k$, is an unmitigated mess. 	Numerics (see Figure \ref{fig:insta}) suggest that for any $k \in \Z^3$ the spectrum consists of a conjugate pair of unstable eigenvalues, a zero eigenvalue (coming from the incompressibility condition), and five stable eigenvalues.  However, due to the inherent complexity of $\hat{\B}_k$ and its characteristic polynomial, we were unable to prove this, except in the case $k=0$.
	
	Failing at the direct approach of simply factoring the characteristic polynomial of $\hat{\B}_k$, we instead employ an indirect approach based on isolating the highest order (in terms of the wavenumber $k$) part of the characteristic polynomial and deriving its asymptotic form as $\abs{k} \to \infty$.  For this it's convenient to parameterize the matrices in terms of $k \in \R^3$ rather than $\Z^3$.  Using this idea, the special form of the highest-order term, and the implicit function theorem, we are then able to prove the existence of an unstable conjugate pair of eigenvalues, smoothly parameterized by $k \in \R^3$ in a neighborhood of infinity.  Remarkably, since the neighborhood of infinity contains all but finitely many lattice points from $\Z^3$, we conclude from this argument that for \emph{all but finitely many} wavenumbers $\hat{\B}_k$ is unstable.  Combining this with a number of delicate spectral estimates and an application of Rouch\'{e}'s theorem, we are then able to find $k_\ast \in \Z^3$ with the largest growth rate.  From this and a Fourier synthesis we then construct the desired maximal growing mode.  
	
	The lack of self-adjointness is also an issue when we seek to use spectral information about $\hat{\B}_k$ to obtain bounds on the corresponding matrix exponential $e^{t \hat{\B}_k}$.  These bounds are required to obtain the bounds on the semigroup generated by the linearization that verify that our growing mode is actually maximal among all linear solutions.  We only know that $e^{t \hat{\B}_k}$ is similar to its diagonal matrix up to a change of basis matrix whose norm
	\emph{depends on $k$}. Circumventing this issue requires a good understanding of the decay of the spectrum of the symmetric part of $\hat{\B}_k$ as $k$ becomes large, and the precise workaround is discussed at the beginning of the proof of \fref{Proposition}{prop:unif_bound_mat_exp}.

	In \fref{Section}{sec:nonlinear_estimates} we derive the nonlinear bootstrap energy estimates and the nonlinearity estimate.  Here the primary difficulty is related to rewriting the problem in a way that prevents time derivatives from entering the nonlinearity.  If we were to naively rewrite \eqref{eq:stat_prob_ang_mom} by writing $J\pdt\omega = J_{eq}\pdt\omega + \brac{J-J_{eq}}\pdt\omega$ and considering the term $\brac{J-J_{eq}}\pdt\omega$ as a remainder term, then we would then not be able to close the estimates due to this time derivative being present as part of the nonlinear remainder. Instead we must multiply \eqref{eq:stat_prob_ang_mom} by $J_{eq}J^{-1}$, which solves the time derivative problem but significantly worsens the form of the remaining terms in the nonlinearity.  In spite of this, we are able to derive the appropriate estimates needed for the bootstrap argument.  
	
	We delay the development of the final ingredient, the local existence theory, until \fref{Appendix}{sec:lwp}.  Our local existence theory is built on a nonlinear Galerkin scheme that employs the Fourier basis for the finite dimensional approximations.  To solve the resulting nonlinear, but finite dimensional, ODE we borrow many of the nonlinear estimates from \fref{Section}{sec:nonlinear_estimates}.
	
	\fref{Section}{sec:bootstrap} combines the four ingredients to prove our instability result.  This culminates in \fref{Theorem}{thm:bootstrap_instability}, the main result of the paper.  Finally, in \fref{Appendix}{sec:auxiliary} we record a number of auxiliary results that are used throughout the main body of the paper.
	
\subsection{Notation}
\label{sec:notation}

	We say a constant $C$ is universal if it only depends on the various parameters of the problem, the dimension, etc., but not on the solution or the data.
	The notation $\alpha \lesssim \beta$ will be used to mean that there exists a universal constant $C>0$ such that $\alpha \leqslant C \beta$. 

	Let us also record here some basic notation for linear algebraic operations.
	For any $w\in\R^n$ we denote by $P_\parallel\brac{w}$ and $P_\perp\brac{w}$ the orthogonal projections onto the span of $w$ and its orthogonal complement, respectively.
	More precisely: for any nonzero $w$, $P_\parallel\brac{w} = \frac{w\otimes w}{\abs{w}^2}$ and $P_\perp\brac{w} = I - \frac{w\otimes w}{\abs{w}^2}$,
	whilst $P_\parallel \brac{0} = 0$ and $P_\perp \brac{0} = I$.
	For any $v\in\R^2$ and $w\in\R^3$ we write $\bar{w} = \brac{w_1,w_2}$, $\bar{w}^\perp = \brac{-w_2, w_1}$, $\tilde{v} = \brac{v_1, v_2, 0}$, and $\tilde{v}^\perp = \brac{-v_2, v_1, 0}$.
	Finally, let $X_1$, $X_2$, $Y_1$, and $Y_2$ be normed vector spaces, let $L_1 \in \Leb\brac{X_1,\,Y_1}$, and let $L_2\in\Leb\brac{X_2,\,Y_2}$.
	The \emph{direct sum} of $L_1$ and $L_2$, denoted $L_1 \oplus L_2$, is the bounded linear operator from $X_1 \times X_2$ to $Y_1 \times Y_2$ defined via,
	for every $\brac{f_1,f_2}\in X_1 \times X_2$, $\brac{L_1 \oplus L_2}\brac{f_1,f_2} \defeq \brac{L_1 f_1, L_2 f_2}$.


\section{Analysis of the linearization}
\label{sec:ana_lin_prob}

	To begin we record the precise form of the linearization of \eqref{eq:stat_prob_lin_mom}--\eqref{eq:stat_prob_microinertia} about the equilibrium solution
	$\brac{u_{eq}, p_{eq}, \omega_{eq}, J_{eq}} = (0, 0, \frac{\tau}{2\kappa} e_3, \diag (\lambda, \lambda, \nu))$
	and introduce notation which allows us to write the linearized problem in a compact form.
	Then in \fref{Section}{sec:block_structure} we note that the linearized operator has a natural block structure with only \emph{one} block which gives rise to non-trivial dynamics.
	It is this component whose spectrum we study in detail in \fref{Section}{sec:spec_ana}.
	The results from \fref{Section}{sec:spec_ana} are then used to construct the semigroup associated with the linearization in \fref{Section}{sec:semigroup}
	and to construct a maximally unstable solution to the linearized problem in \fref{Section}{sec:max_unstable_sol}.

	The linearization is 
	\begin{subnumcases}{}
		\pdt u = \brac{\mu + \kappa/2} \Delta u + \kappa \nabla\times\omega - \nabla p,
		\label{eq:lin_prob_lin_mom}\\
		J_{eq} \pdt \omega
			=   -\brac{\omega\times J_{eq}\omega_{eq} + \omega_{eq}\times J\omega_{eq} + \omega_{eq} \times J_{eq} \omega}
		\nonumber\\\hspace{1.65cm}
			+ \kappa \nabla\times u - 2\kappa\omega + \brac{\alpha + \beta/3 - \gamma}\nabla\nabla\cdot\omega + \brac{\beta + \gamma}\Delta\omega, \text{ and }
		\label{eq:lin_prob_ang_mom}\\
		\pdt J = \sbrac{\Omega_{eq}, J} + \sbrac{\Omega, J_{eq}}
		\label{eq:lin_prob_microinertia}
	\end{subnumcases}
	subject to $\nabla\cdot u = 0$
	which, for $X = \brac{u,\omega,J}$, $D = I_3 \oplus J_{eq} \oplus \ISymThreeByThree$ (where $\ISymThreeByThree$ denotes the identity function on the space of 3-by-3 matrices),
	$\Lambda\brac{p} = \brac{-\nabla p, 0, 0}$,
	and an appropriate linear operator $\widetilde{\Leb}$
	can be written more succintly as
	\begin{equation}
	\label{eq:PDE_compact_not_lin}
		\pdt DX = \widetilde{\Leb} X + \Lambda\brac{p} \text{ subject to } \nabla\cdot u=0.
	\end{equation}

\subsection{The block structure}
\label{sec:block_structure}

	The linearization \eqref{eq:lin_prob_lin_mom}--\eqref{eq:lin_prob_microinertia} can be decomposed into blocks which do not interact with one another.
	Notably, only one of these blocks gives rise to non-trivial dynamics, so we will identify this block before studying its spectrum in \fref{Section}{sec:spec_ana}.
	More precisely: writing
	$$
		J = \begin{pmatrix} \JO & a \\ a^T & J_{33} \end{pmatrix},
	$$
	the linearization becomes
	\begin{subnumcases}{}
		\pdt u = \brac{\mu + \kappa/2} \Delta u + \kappa \nabla\times\omega - \nabla p
		\label{eq:lin_prob_block_lin_mom},\\
		J_{eq} \pdt \omega = \kappa\nabla\times u - 2\kappa\omega + \brac{\alpha + \beta/3 - \gamma}\nabla\nabla\cdot\omega + \brac{\beta + \gamma}\Delta\omega
			-\brac{\lambda-\nu}\frac{\tor}{2\kappa}\tilde{\omega}^\perp - {\brac{\frac{\tor}{2\kappa}}}^2 \tilde{a}^\perp
		\label{eq:lin_prob_block_ang_mom},\\
		\pdt a = \brac{\lambda - \nu} \bar{\omega}^\perp + \frac{\tor}{2\kappa} a^\perp
		\label{eq:lin_prob_block_microinertia_a}, \\
		\pdt \JO = \frac{\tau}{2\kappa} \sbrac{R, \JO}
		\label{eq:lin_prob_block_microinertia_barJ}, \text{ and }\\
		\pdt J_{33} = 0
		\label{eq:lin_prob_block_microinertia_J33}
	\end{subnumcases}
	subject to $\nabla\cdot u = 0$, where $R$ is the 2-by-2 matrix given by $R = e_2\otimes e_1 - e_1\otimes e_2$.
	In particular, if we write $Y = \brac{u,\omega,a}$ and $\bar{D} = I_3 \oplus J_{eq} \oplus I_2$
	then \eqref{eq:lin_prob_block_lin_mom}, \eqref{eq:lin_prob_block_ang_mom}, and \eqref{eq:lin_prob_block_microinertia_a} can be written as $\pdt \bar{D} Y = \widetilde{\M} Y +\Lambda\brac{p}$
	subject to $\nabla\cdot u = 0$ for an appropriate operator $\widetilde{\M}$.
	In particular, since $\widetilde{\M}$ commutes with the application of the Leray projector to $u$
	it suffices to study $\pdt \bar{D} Y = \widetilde{\M} \,\bar{\mathbb{P}}\, Y$, where $\bar{\mathbb{P}} \defeq \mathbb{P}_L \oplus I_3 \oplus I_2$ for $\mathbb{P}_L$ denoting the Leray projector.
	Recall that the Leray projector is the projection onto divergence-free vector fields, which on the 3-torus can be written explicitly as $\mathbb{P}_L = - \nabla\times \Delta\inv\, \nabla\times$
	(see \fref{Lemma}{lemma:formula_complement_Leray_projector}).

	So finally, for $\B \defeq \bar{D}\inv \widetilde{\M} \,\bar{\mathbb{P}}$ we have that
	$\Leb \defeq D\inv \widetilde{\Leb} \,\mathbb{P}$, where $\mathbb{P} \defeq \mathbb{P}_L \oplus I_3 \oplus \ISymThreeByThree$,
	can be written as $\Leb = \B\oplus\frac{\tau}{2\kappa}\sbrac{R,\,\cdot\,}\oplus 0$.
	Note that using this notation we may write the linearized problem \eqref{eq:PDE_compact_not_lin}, after Leray projection, as
	\begin{equation}
	\label{eq:PDE_compact_not_lin_semigroup}
		\pdt X = \Leb X.
	\end{equation}
	This is a particularly convenient formulation since it is amenable to attack via semigroup theory.

	What matters for the purpose of the spectral analysis carried out in the following section is that
	the equations governing the non-trivial dynamics of the problem can be written as $\pdt Y = \B Y$.
	The punchline is that it suffices to study the spectrum of $\B$, which is precisely what we do in \fref{Section}{sec:spec_ana} below.

\subsection{Spectral analysis}
\label{sec:spec_ana}

	In this subsection we study the spectrum of the operator $\B$ introduced in the preceding section.
	Since our domain is the torus it is natural to consider the symbol $\hat{\B}$ of this operator, which gives a matrix in $\mathbb{C}^{8 \times 8}$ for each wavenumber $k \in \mathbb{Z}^3$.  However, it will be more convenient for us to parameterize these with a continuous wavenumber $k \in \R^3$; for each such $k$ we define $\hat{\B}_k \in \mathbb{C}^{8 \times 8}$ 	according to   
	\begin{multline}
	\label{eq:def_B}
	\hat{\B}_k := \\
		\begin{pmatrix}
				-\brac{\mu + \frac{\kappa}{2}} \abs{k}^2 P_\perp\brac{k}
			&
				i\kappa k\times
			&
				0
			\\
				J_{eq}\inv \brac{i\kappa k\times} P_\perp\brac{k}
			&
				-2\kappa J_{eq}\inv - \tilde{\alpha} \abs{k}^2 J_{eq}\inv P_\parallel \brac{k} - \tilde{\gamma} \abs{k}^2 J_{eq}\inv P_\perp \brac{k}
				- \brac{1 - \frac{\nu}{\lambda}} \frac{\tau}{2\kappa} R_{33}
			&
				- \frac{1}{\lambda} {\brac{\frac{\tau}{2\kappa}}}^2 R_{32}
			\\
				0
			&
				\brac{\lambda - \nu} R_{23}
			&
				\frac{\tau}{2\kappa} R_{22}
		\end{pmatrix},
	\end{multline}
	where $P_\parallel$ and  $P_\perp$ are as defined in Section \ref{sec:notation}, and 
	\begin{equation*}
		R_{22} = R = \begin{pmatrix}
			0 & -1\\
			1 & 0
		\end{pmatrix},\,
		R_{23} = \begin{pmatrix}
			0 & -1 & 0\\
			1 &  0 & 0
		\end{pmatrix},\,
		R_{32} = \begin{pmatrix}
			0 & -1\\
			1 &  0\\
			0 &  0
		\end{pmatrix},\text{ and }
		R_{33} = \begin{pmatrix}
			0 & -1 & 0\\
			1 &  0 & 0\\
			0 &  0 & 0
		\end{pmatrix}.
	\end{equation*}
	Note here that we have abused notation by writing $i \kappa k \times$ as a place-holder to indicate the matrix corresponding to the linear map $z \mapsto i \kappa k \times z$.  
	
	It is somewhat tricky to extract useful spectral information from $\hat{\B}_k$ directly.
	Instead, we introduce a sort of similarity transformation $M_k \defeq Q_k \hat{\B}_k \bar{Q}_k$
	in such a way that $M_k$ is a real matrix, i.e. $M_k \in \R^{8 \times 8}$ for each $k\in\R^3$, which carries the spectral information of $\hat{\B}_k$.
	Here the matrices $Q_k,\bar{Q}_k \in \mathbb{C}^{8 \times 8}$ are defined by 
	\begin{equation*}
	 Q_k \defeq T\brac{k} \oplus J_{eq}^{1/2} \oplus sR_{22} \text{ and }\bar{Q}_k \defeq T\brac{k} \oplus J_{eq}^{-1/2} \oplus \brac{-s\inv}R_{22},
	\end{equation*}
 	where $T\brac{k} \defeq \frac{ik\times}{\abs{k}}$ if $k\neq 0$, $T\brac{0} \defeq 0$, and $s\defeq \frac{-1}{\sqrt{\lambda-\nu}} \frac{\tor}{2\kappa}$.
	Unfortunately,  $Q_k$ and $\bar{Q}_k$ are not quite invertible, so this isn't exactly a similarity transformation.
	When $k \neq 0$, this is due to the fact that $(k,0,0)$ belongs to the kernels of both operators, a fact that is ultimately related to the divergence-free condition for $u$, which reads $k\cdot \hat{u}_k =0$ on the Fourier side.
	In principle we could remove the kernel and restore invertibility, but the resulting 7-by-7 matrices are less convenient to work with.
	As such, we will stick with the 8-by-8 setup and find a work-around for the invertibility issue.
	Ultimately we will prove in \fref{Propositions}{prop:max_unstable_evals} and \ref{prop:unif_bound_mat_exp} that we can gain good spectral information about $M_k$,
	and it will follow from \fref{Definition}{def:lin_map_act_on_quot_space} and \fref{Lemmas}{lemma:B_Q_act_on_V_k} and \ref{lemma:sim_mat_act_on_quot_spaces}
	that the spectrum of $\hat{\B}_k$ coincides with that of $M_k$.
	Note that for all these $k$-dependent matrices we will write equivalently $M_k$ or $M\brac{k}$.

	An important observation is that the matrix $M_k \in \R^{8 \times 8}$ may be decomposed into its symmetric part $S_k \in \R^{8\times 8}$ and its antisymmetric part $A\in \R^{8\times 8}$ such that $A$ is \emph{independent of $k$}.   More precisely
	\begin{equation}
	\label{eq:def_S}
		S_k = \begin{pmatrix}
				-\brac{\mu+\frac{\kappa}{2}}\abs{k}^2 P_\perp \brac{k}
			&
				\kappa \abs{k} P_\perp \brac{k} J_{eq}^{-1/2}
			&
				0
			\\
				\kappa \abs{k} J_{eq}^{-1/2} P_\perp \brac{k}
			&
				-2\kappa J_{eq}\inv - \tilde{\alpha} \abs{k}^2 J_{eq}^{-1/2} P_\parallel \brac{k} J_{eq}^{-1/2} - \tilde{\gamma} \abs{k}^2 J_{eq}^{-1/2} P_\perp \brac{k} J_{eq}^{-1/2}
			&
				\phi I_{32}
			\\
				0
			&
				\phi I_{23}
			&
				0
		\end{pmatrix}
	\end{equation}
	and
	\begin{equation}
	\label{eq:def_A}
		A = 0 \oplus c R_{33} \oplus dR_{22},
	\end{equation}
	where 
	\begin{equation}\label{phi_c_d_def}
		\phi = \sqrt{1 - \frac{\nu}{\lambda}} \, \frac{\tor}{2\kappa}, \; c = \brac{\frac{\nu}{\lambda} - 1}\frac{\tor}{2\kappa}, \text{ and }d = \frac{\tor}{2\kappa}.	 
	\end{equation}
	Note that $M_k$ is written out explicitly in all its gory details in \fref{Appendix}{sec:M_k}.

	We now turn to the issue of proving that the spectra of $\hat{\B}_k$ and $M_k$ coincide.
	To do this we will need to use the notion of linear maps acting on quotient spaces.
	Here we quotient out by the spaces $V_k$ defined as $V_0 \defeq \Span\setdef{\brac{v,0,0}}{v\in\R^3}$ as well as, for any nonzero $k\in\R^3$, $V_k \defeq \Span\brac{k,0,0}$.

	\begin{definition}[Linear maps acting on quotient spaces]
	\label{def:lin_map_act_on_quot_space}
		Let $A \in \C^{n \times n}$ and let $V$ be a subspace of $\C^n$. We say that \emph{$A$ acts on $\C^n /\, V$} if and only if $\ker A = V$ and $\im A \subseteq V^\perp$, where $V^\perp$ is the orthogonal complement relative to the standard Hermitian structure on $\C^n$.
	\end{definition}

	We refer to \fref{Lemma}{lemma:sim_mat_act_on_quot_spaces} for the key property of linear maps acting on quotient spaces which we will use in the sequel, namely conditions under which two
	matrix representations of such maps are equivalent, even when the `change of basis' matrices involved are not invertible.
	We now prove that the matrices we are dealing with here do satisfy the hypotheses of \fref{Lemma}{lemma:sim_mat_act_on_quot_spaces}.

	\begin{lemma}
	\label{lemma:B_Q_act_on_V_k}
		For any $k\in\R^3$, $\hat{\B}_k$, $Q_k$, and $\bar{Q}_k$ act on $\C^8 /\, V_k$ and $Q_k \bar{Q}_k = \bar{Q}_k Q_k = \proj_{V_k^\perp}$.
	\end{lemma}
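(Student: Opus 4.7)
The plan is to split the lemma into its three claims --- the product identity $Q_k\bar Q_k = \bar Q_k Q_k = \proj_{V_k^\perp}$, and the statements that $Q_k$, $\bar Q_k$, and $\hat\B_k$ each act on $\C^8/V_k$ --- and to verify each by leveraging the block structure of the matrices involved. The first two claims reduce to block-diagonal bookkeeping; the third requires one genuine computation on the reduced $(v,w,b)$-system.

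For the product identity I would compute one diagonal block of $Q_k\bar Q_k$ at a time. On the $\omega$-block, $J_{eq}^{\pm 1/2}J_{eq}^{\mp 1/2} = I_3$; on the $a$-block, $(sR_{22})(-s\inv R_{22}) = -R_{22}^2 = I_2$ since $R_{22}^2 = -I_2$; and on the $u$-block, the identity $(k\times)^2 = k\otimes k - \abs{k}^2 I$ yields $T(k)^2 = P_\perp(k)$ for $k\neq 0$ and $T(0)^2 = 0$. The three blocks assemble to $P_\perp(k)\oplus I_3\oplus I_2 = \proj_{V_k^\perp}$, with the case $k=0$ consistent since then $V_0$ exhausts the $u$-block. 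The same computation works for $\bar Q_k Q_k$. The claim that $Q_k$ and $\bar Q_k$ act on $\C^8/V_k$ is then immediate: the invertibility of $J_{eq}^{\pm 1/2}$ and $s^{\pm 1}R_{22}$ combined with the cross-product identities $\ker(k\times) = \Span(k)$ and $\im(k\times) = k^\perp$ (for $k\neq 0$; trivially for $k=0$) give $\ker Q_k = V_k$ and $\im Q_k = V_k^\perp$.

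For $\hat\B_k$ the image inclusion and the easy half of the kernel inclusion are immediate by inspection: the $u$-component of $\hat\B_k(v,w,b)$ is $-(\mu+\kappa/2)\abs{k}^2 P_\perp(k)v + i\kappa k\times w$, whose summands both lie in $k^\perp$; and $P_\perp(k)k = 0 = k\times k$ force the first block column of $\hat\B_k$ to vanish on $V_k$. The substantive step is $\ker\hat\B_k \subseteq V_k$. After subtracting the $P_\parallel(k)v$ contribution (which already lies in $V_k$), it suffices to show that $\hat\B_k(v,w,b)=0$ with $v\in k^\perp$ forces $(v,w,b) = 0$. For $k = 0$, I would take the Schur complement of the $(\omega,a)$-subblock with respect to the invertible $aa$-block $\frac{\tau}{2\kappa}R_{22}$, which collapses to the strictly negative definite matrix $-2\kappa J_{eq}^{-1}$ after exploiting $R_{32}R_{22}R_{23} = -R_{33}$. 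For $k\neq 0$, I would solve the first equation for $v$ (obtaining $v = \frac{i\kappa}{(\mu+\kappa/2)\abs{k}^2}k\times w$), solve the third for $b$ in terms of $\bar w$, and substitute both into the middle equation. Using $(k\times)^2 = k\otimes k - \abs{k}^2 I$ to simplify and observing that the $R_{33}$-contributions cancel because $1 - \nu/\lambda = (\lambda-\nu)/\lambda$, one is left with the decoupled equation
\begin{equation*}
\Bigl(\tfrac{\kappa^2}{\mu + \kappa/2} - \tilde\gamma\abs{k}^2 - 2\kappa\Bigr)P_\perp(k)w + \bigl(-\tilde\alpha\abs{k}^2 - 2\kappa\bigr)P_\parallel(k)w = 0,
\end{equation*}
both of whose coefficients are strictly negative under the positivity hypothesis \eqref{eq:positivity_requirement_on_viscosity_constants}; this forces $w = 0$, after which $v$ and $b$ vanish as well.

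The main obstacle is this final substitution step: the bookkeeping simultaneously involves the projections $P_\parallel(k), P_\perp(k)$, the four rotation matrices $R_{22}, R_{23}, R_{32}, R_{33}$, and the viscosity constants, and would not close without both the algebraic cancellation of the $R_{33}$-terms (which reflects the tight coupling between the $\omega$- and $a$-dynamics of the linearization) and the strict sign information supplied by \eqref{eq:positivity_requirement_on_viscosity_constants}.
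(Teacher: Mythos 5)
Your proof is correct, but it takes a genuinely different route from the paper's at the one substantive point --- showing $\ker\hat\B_k\subseteq V_k$. The paper uses the energy-dissipation structure directly: it notes that $\bar D\hat\B_k y\cdot y$ is a negative sum of squares (the dissipation quadratic form), and the third row identity that expresses $b$ in terms of $\bar\theta$ makes the coupling terms vanish, so that $0 = \bar D\hat\B_k y\cdot y = -\mu|k|^2|v_\perp|^2 - 2\kappa|\tfrac12 ik\times v - \theta|^2 - \tilde\alpha|k|^2|\theta_\parallel|^2 - \tilde\gamma|k|^2|\theta_\perp|^2$ instantly forces $\theta = 0$ and $v_\perp = 0$. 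You instead eliminate $v_\perp$ and $b$ from the other rows and substitute into the $\omega$-row (a Schur complement when $k=0$), observing that the $R_{33}$-term cancels against the $R_{32}R_{22}^{-1}R_{23}$-term because $R_{32}R_{22}R_{23} = -R_{33}$; what remains splits cleanly into $P_\parallel$ and $P_\perp$ pieces with strictly negative scalar coefficients. Your cancellation is exactly the algebraic shadow of the fact that the $\bar\omega$--$a$ coupling is skew (and hence drops out of the quadratic form), so both proofs are ultimately exploiting the same structural feature; the paper's version is shorter once one has the quadratic form in hand, while yours is more computational but displays the mechanism without appealing to the dissipation identity. The product identity $Q_k\bar Q_k = \proj_{V_k^\perp}$ and the quotient-action statements for $Q_k,\bar Q_k$ are handled essentially as in the paper.
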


	\begin{proof}
		First we consider $\hat{\B}_k$ for $k\neq 0$.
		Since $\hat{\B}_k^\dagger \brac{k,0,0} = \hat{\B}_k \brac{k,0,0} = 0$, where $\dagger$ denotes the conjugate transpose,
		we know that $\im\hat{\B}_k \subseteq V_k$ and that $V_k \subseteq \ker\hat{\B}_k$,
		so we only have to show that $\ker\hat{\B}_k \subseteq V_k$.
		Let $y=\brac{v,\theta,b}\in\ker\hat{\B}_k$.
		The third row of \eqref{eq:def_B} tells us that $b = \frac{2\kappa\brac{\lambda-\nu}}{\tor} \bar{\theta}$ and hence
		$$
			0 = \bar{D} \hat{\B}_k y \cdot y
			= -\mu\abs{k}^2 {\vbrac{v_\perp}}^2
			- 2\kappa {\vbrac{ \frac{1}{2} ik\times v - \theta }}^2
			- \tilde{\alpha} \abs{k}^2 {\vbrac{\theta_\parallel}}^2
			- \tilde{\gamma} \abs{k}^2 {\vbrac{\theta_\perp}}^2.
		$$
		Therefore $\theta = v_\perp = 0$, and hence also $b=0$, such that indeed $y = \brac{v_\parallel,0,0}\in V_k$.
		So indeed $\hat{\B}_k$ acts on $\C^8/\,V_k$.

		Now we consider $\hat{\B}_0$, proceeding essentially as we did above for the case $k\neq 0$.
		Since $\hat{\B}_0^\dagger \brac{v,0,0} = \hat{\B}_0 \brac{v,0,0} = 0$ for any $v\in\R^3$ it follows that $\im\hat{B}_0 \subseteq V_0$ and that $V_0 \subseteq \ker \hat{\B}_0$.
		Now let $y = \brac{v,\theta,b}\in\ker\hat{\B}_0$ and observe that, as above, $b=\frac{2\kappa\brac{\lambda-\nu}}{\tor} \bar{\theta}$
		and that hence $0 = \bar{D}\hat{\B}_0 y\cdot y = -2\kappa\abs{\theta}^2$.
		Therefore $\theta=0$ and $b=0$ such that indeed $y=\brac{v,0,0}\in V_0$.
		So $\ker\hat{\B}_0 \subseteq V_0$ and thus indeed $\hat{\B}_0$ acts on $\C^8/\,V_0$.

		We now turn our attention to $Q_k$ and $\bar{Q}_k$.
		Since ${\brac{k,0,0}}^T Q_k = {\brac{k,0,0}}^T \bar{Q}_k = \brac{k\cdot T\brac{k}} \oplus 0 \oplus 0 = 0$ for any nonzero $k\in\R^3$
		and since $\brac{v,0,0}\cdot Q_0 = \brac{v,0,0}\cdot \bar{Q}_0 = v\cdot T\brac{0} \oplus 0 \oplus 0 = 0$, we may deduce that $\im Q_k,\,\im \bar{Q}_k \subseteq V_k^\perp$
		for \emph{all} $k\in\Z^3$.
		Now observe that, since $J_{eq}^{1/2}$ and $R_{22}$ are invertible, we deduce that $\ker Q_k = \ker \bar{Q}_k = \brac{\ker T\brac{k}} \oplus 0 \oplus 0$.
		Therefore, since $\ker T\brac{k} = \Span\cbrac{k}$ when $k$ is nonzero and since $\ker T\brac{0} = \R^3$,
		we have that indeed $\ker Q_k = \ker \bar{Q}_k = V_k$ for \emph{all} $k\in\Z^3$, i.e. $Q_k$ and $\bar{Q}_k$ act on $\C^8/\,V_k$ for all $k\in\Z^3$.

		Finally observe that, since $R_{22}^2 = -I_2$, it follows that $Q_k\bar{Q}_k = \bar{Q}_k Q_k = {T\brac{k}}^2 \oplus I_3 \oplus I_2$,
		where ${T\brac{0}}^2 = 0$ and ${T\brac{k}}^2 = \frac{{\brac{ik\times}}^2}{\abs{k}^2} = \proj_{{\Span \cbrac{k}}^\perp}$ for $k\neq 0$.
		Note that we have used the $\varepsilon$-$\delta$ identity $\varepsilon_{aij}\varepsilon_{akl} = \delta_{ik}\delta_{jl}- \delta_{il}\delta_{jk}$
		to deduce that ${\brac{k\times}}^2 = -\abs{k}^2 \proj_{{\Span\cbrac{k}}^\perp}$.
		So indeed $Q_k \bar{Q}_k = \bar{Q}_k Q_k = \proj_{V_k^\perp}$.
	\end{proof}

	We now record how $M_k$ behaves under transformations of the form $k \mapsto -k$ and $k=(\bar{k}, k_3) \mapsto (\bar{H}\bar{k}, k_3)$ for $\bar{H}$ an orthogonal map.
	This comes in handy when constructing the maximally unstable solution in \fref{Section}{sec:max_unstable_sol}.

	\begin{lemma}[Equivariance and invariance of $M$]
	\label{lemma:equiv_inv_M}
	Let $H$ be a \emph{horizontal rotation}, i.e. $H\in\R^{3\times 3}$ such that $H = \bar{H} \oplus 1$ for some 2-by-2 orthogonal matrix $\bar{H}$.
		We call $\widetilde{H} \defeq H \oplus H \oplus \bar{H}$ the \emph{joint horizontal rotation} associated with $H$.
		\begin{enumerate}
			\item	$M$ is equivariant under horizontal rotations, i.e. for any $k\in\R^3$ and any horizontal rotation $H$, $M\brac{Hk} = \widetilde{H} M\brac{k} \widetilde{H}^T$ and
			\item	$M$ is even, i.e. for any $k\in\R^3$, $M\brac{-k} = M\brac{k}$.
		\end{enumerate}
	\end{lemma}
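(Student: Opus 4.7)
The plan is to verify both claims at the level of the blockwise formula $M_k = Q_k \hat{\B}_k \bar{Q}_k$, reducing everything to (i) the parity/equivariance of the basic building blocks appearing in $\hat{\B}_k$ and (ii) a handful of commutation identities between $H$, $\bar{H}$, and the constant matrices $J_{eq}, R_{22}, R_{23}, R_{32}, R_{33}$. First I would record the building-block identities. Since $H \in SO(3)$ fixes $e_3$, it commutes with $J_{eq}$ and hence with $J_{eq}^{\pm 1/2}$. From the block form $H = \bar{H} \oplus 1$ and the fact that all 2-by-2 rotations commute, one obtains the intertwining identities $HR_{33} = R_{33}H$, $HR_{32} = R_{32}\bar{H}$, $R_{23}H = \bar{H}R_{23}$, and $\bar{H}R_{22} = R_{22}\bar{H}$. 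Since $\det H = 1$, we have $(Hk)\times(Hv) = H(k\times v)$, so $P_\perp(Hk) = HP_\perp(k)H^T$, $P_\parallel(Hk) = HP_\parallel(k)H^T$, $\abs{Hk} = \abs{k}$, and consequently $T(Hk) = HT(k)H^T$. Under $k\mapsto -k$, the quantities $P_\perp, P_\parallel, \abs{k}^2$ are even while $ik\times$ and $T(k)$ are odd.

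For part (1), these identities allow me to verify, block by block in the $3+3+2$ decomposition, the three conjugation identities $Q_{Hk} = \widetilde{H} Q_k \widetilde{H}^T$, $\bar{Q}_{Hk} = \widetilde{H} \bar{Q}_k \widetilde{H}^T$, and $\hat{\B}_{Hk} = \widetilde{H} \hat{\B}_k \widetilde{H}^T$. The only blocks of $\hat{\B}_k$ requiring careful bookkeeping are the rectangular $(2,3)$ and $(3,2)$ blocks, where the intertwining identities for $R_{32}$ and $R_{23}$ must be combined with $\bar{H}^T\bar{H} = I$ to produce the mixed conjugation $H \, \cdot \, \bar{H}^T$ (respectively $\bar{H}\, \cdot\, H^T$) matching the block structure of $\widetilde{H}$. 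Combining the three conjugation identities and using $\widetilde{H}^T\widetilde{H} = I$ then yields $M_{Hk} = \widetilde{H} M_k \widetilde{H}^T$.

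For part (2), I would compute $M_{-k} = Q_{-k}\hat{\B}_{-k}\bar{Q}_{-k}$ directly, exploiting that $Q_{\pm k}$ and $\bar{Q}_{\pm k}$ differ only in their first diagonal block, whose sign flips, while $\hat{\B}_{-k}$ agrees with $\hat{\B}_k$ except that its $(1,2)$ and $(2,1)$ blocks flip sign. Each block of $M_{-k}$ then picks up an even number of sign flips: the $(1,1)$ block sees two copies of $T_k$, the $(1,2)$ and $(2,1)$ blocks each pair a single $T_k$ with a single $ik\times$, and all remaining blocks involve neither $T_k$ nor $ik\times$. The signs cancel in every block, yielding $M_{-k} = M_k$.

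The main obstacle is purely notational: keeping track of which block lives in $\R^3$ versus $\R^2$, and of which of $H$ or $\bar{H}$ acts on each side of a given block. No conceptual difficulty arises, because $\B$ is manifestly assembled from rotation-invariant scalar data (the viscosities, $\lambda,\nu,\tau,\kappa$, and $J_{eq}$) together with the single distinguished direction $e_3$, which is preserved by every horizontal rotation and is insensitive to the inversion $k \mapsto -k$.
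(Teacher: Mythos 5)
Your proof is correct in substance but takes a genuinely different route from the paper's. The paper works directly with the decomposition $M_k = S_k + A$ recorded in \eqref{eq:def_S} and \eqref{eq:def_A}: the symmetric part $S_k$ is assembled entirely from $P_\parallel(k)$, $P_\perp(k)$, $\abs{k}$ and the constants $J_{eq}^{-1/2}$, $I_{23}$, $I_{32}$ — with neither $T(k)$ nor $ik\times$ appearing — while $A$ is a constant. Evenness of $M$ is then immediate, and equivariance reduces to the short list of commutation identities $\bar{H}I_{23}H = I_{23}$, $HI_{32}\bar{H} = I_{32}$, $[H, J_{eq}^{-1}] = 0$, and $\widetilde{H}A\widetilde{H}^T = A$. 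You instead unwind the factorization $M_k = Q_k\hat{\B}_k\bar{Q}_k$ and verify conjugation and parity for each factor separately. Both routes work, but the paper's is leaner precisely because passing to $S_k + A$ first eliminates $T(k)$ and $ik\times$ — the two building blocks that are delicate, being odd in $k$ and equivariant under $H$ only up to a factor of $\det H$. In your factor-by-factor scheme those sign factors must be tracked and shown to cancel, which works because each block of $M_k$ carries an even number of cross-product factors (the same bookkeeping you exploit for evenness), but it does require the explicit restriction $\det H = 1$ that you invoke: the identities $Q_{Hk} = \widetilde{H}Q_k\widetilde{H}^T$ and $\hat{\B}_{Hk} = \widetilde{H}\hat{\B}_k\widetilde{H}^T$ fail for $\det H = -1$. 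The lemma's hypothesis literally allows $\bar{H}$ to be a reflection, so strictly speaking you should flag this restriction explicitly; note, though, that the paper's own proof also implicitly restricts to $\bar{H}\in SO(2)$ (that is where the ``two-dimensional rotations commute'' step and $\widetilde{H}A\widetilde{H}^T = A$ actually hold), so your restriction matches the scope in which the statement is true.
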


	\begin{proof}
		Note that $k\mapsto P_\parallel\brac{k}, P_\perp\brac{k}$ are both even and equivariant under horizontal rotations,
		i.e., for any horizontal rotation $H$, $P_\parallel\brac{Hk} = HP_\parallel\brac{k} H^T$ and similarly for $P_\perp$,
		whilst $k\mapsto\abs{k}$ is even and invariant under horizontal rotations.
		We can therefore write
		\begin{equation*}
			S\brac{k} = \begin{pmatrix}
				A\brac{k}	& B\brac{k}				& 0\\
				C\brac{k}	& -2\kappa J_{eq}\inv + D\brac{k}	& \phi I_{32}\\
				0		& \phi I_{23}				& 0
			\end{pmatrix}
		\end{equation*}
		for some $A,B,C,D$ which are equivariant under horizontal rotations and even.
		It follows immediately that $M$ is even.
		Now let $H$ be a horizontal rotation. Since $\bar{H} I_{23} H = I_{23}$, $H I_{32} \bar{H} = I_{32}$, and since $H$ commutes with $J_{eq}\inv$ one may readily compute that
		$S\brac{Hk} = \widetilde{H} S\brac{k} \widetilde{H}^T$.
		Finally, since two-dimensional rotations (i.e. elements of $O\brac{2}$) commute with one another, $A = \widetilde{H}A\widetilde{H}^T$
		and so indeed $M$ is equivariant under horizontal rotations.
	\end{proof}

	We now obtain some fairly crude bounds on the spectrum of $M_k$ in \fref{Lemmas}{lemma:bounds_spectrum_S}, \ref{lemma:bounds_real_part_eval_M}, and \ref{lemma:bounds_im_part_eval_M}.
	These bounds are nonetheless essential in the proofs of \fref{Propositions}{prop:max_unstable_evals} and \ref{prop:unif_bound_mat_exp}.
	As a first step in obtaining these bounds we identify the quadratic form associated with $S_k$, the symmetric part of $M_k$, in \fref{Lemma}{lemma:quad_form_S}.

	\begin{lemma}[Quadratic form associated with $S_k$]
	\label{lemma:quad_form_S}
		For any $y = \brac{v,\theta,b}\in\R^3\times\R^3\times\R^2 = \R^8$ and any $k\in\R^3$,
		\begin{equation*}
			S\brac{k} y \cdot y
			= -\mu \abs{k}^2 \abs{v_\perp}^2
			- 2\kappa {\vbrac{
				\half\abs{k}v_\perp - J_{eq}^{-1/2} \theta
			}}^2
			- \tilde{\alpha} \abs{k}^2 {\vbrac{
				{\brac{J_{eq}^{-1/2} \theta}}_\parallel
			}}^2
			- \tilde{\gamma} \abs{k}^2 {\vbrac{
				{\brac{J_{eq}^{-1/2} \theta}}_\perp
			}}^2
			+ 2 \phi \bar{\theta} \cdot b.
		\end{equation*}
		where, for any $w\in\R^3$, $w_\parallel \defeq \proj_k w$ and $w_\perp \defeq \brac{I-\proj_k}w $, and $\phi$ is as in \eqref{phi_c_d_def}.
	\end{lemma}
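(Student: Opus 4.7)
The plan is to carry out a direct blockwise expansion of $S(k)y \cdot y$ and then recognize a completion-of-squares.

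Writing $y = (v, \theta, b)$ and reading off the entries of $S(k)$ from \eqref{eq:def_S}, the quadratic form decomposes into four contributions: a diagonal $v$-block, a diagonal $\theta$-block, the symmetric cross term between $v$ and $\theta$, and the symmetric cross term between $\theta$ and $b$ (the $v$-$b$ block vanishes). Since $P_\perp(k)$ is an orthogonal projection satisfying $P_\perp(k) v \cdot v = |v_\perp|^2$ and $P_\perp(k) J_{eq}^{-1/2}\theta \cdot v = J_{eq}^{-1/2}\theta \cdot v_\perp$, and since $J_{eq}^{-1}\theta \cdot \theta = |J_{eq}^{-1/2}\theta|^2$ with $P_\parallel(k), P_\perp(k)$ projecting onto $\Span(k)$ and its orthogonal complement, the direct computation yields
\begin{align*}
S(k) y \cdot y &= -\brac{\mu+\tfrac{\kappa}{2}} |k|^2 |v_\perp|^2 + 2\kappa |k|\, v_\perp \cdot J_{eq}^{-1/2}\theta - 2\kappa |J_{eq}^{-1/2}\theta|^2 \\
&\quad - \tilde{\alpha}|k|^2 \vbrac{(J_{eq}^{-1/2}\theta)_\parallel}^2 - \tilde{\gamma}|k|^2 \vbrac{(J_{eq}^{-1/2}\theta)_\perp}^2 + 2\phi \, I_{23}\theta \cdot b.
\end{align*}

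Next I would note that the $b$-$\theta$ cross term simplifies as $I_{23}\theta \cdot b = \bar{\theta} \cdot b$, directly from the definition of $I_{23}$ (which extracts the first two components). It remains only to handle the three terms involving $v_\perp$ and $J_{eq}^{-1/2}\theta$: expanding
\begin{equation*}
2\kappa \vbrac{\tfrac{1}{2}|k| v_\perp - J_{eq}^{-1/2}\theta}^2
= \tfrac{\kappa}{2}|k|^2 |v_\perp|^2 - 2\kappa|k|\, v_\perp \cdot J_{eq}^{-1/2}\theta + 2\kappa |J_{eq}^{-1/2}\theta|^2,
\end{equation*}
so that the sum of these three terms equals $-\mu|k|^2 |v_\perp|^2 - 2\kappa|\tfrac{1}{2}|k| v_\perp - J_{eq}^{-1/2}\theta|^2$. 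Substituting back produces the claimed identity.

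This is essentially an algebraic verification rather than a deep argument, so there is no serious obstacle. The only mild bookkeeping is keeping track of the factor $\kappa/2$ which is absorbed into $\tilde{\mu} = \mu + \kappa/2$ in the coefficient of $P_\perp(k)$ in the $v$-block but must be split off again to reconstitute the coupling square $|\tfrac{1}{2}|k| v_\perp - J_{eq}^{-1/2}\theta|^2$, in parallel with how $\tilde{\mu}$ arises in \eqref{eq:stat_prob_lin_mom}.
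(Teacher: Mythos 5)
Your computation is correct, and it is essentially the same approach the paper takes: the paper's proof states only that the identity ``follows immediately from the definition of $S$ in \eqref{eq:def_S},'' and you have spelled out the blockwise expansion and the completion of the square that the paper leaves implicit. The key bookkeeping you flag — splitting $\tilde{\mu} = \mu + \kappa/2$ back into $\mu$ and the $\kappa/2$ needed to form the coupling square $\vbrac{\tfrac12\abs{k}v_\perp - J_{eq}^{-1/2}\theta}^2$ — is exactly right.
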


	\begin{proof}
		This follows immediately from the definition of $S$ in \eqref{eq:def_S}.
	\end{proof}

	\noindent
	We now use \fref{Lemma}{lemma:quad_form_S} to obtain upper bounds on the eigenvalues of $S$.

	\begin{lemma}[Spectral bounds on $S_k$]
	\label{lemma:bounds_spectrum_S}
		For any $k\in\R^3$, it holds that $\max\sigma\brac{S_k} \leqslant \min\brac{\phi,\,\frac{C_\sigma}{\abs{k}^2}}$,
		where $C_\sigma \defeq \frac{\phi^2\lambda}{\min\brac{\tilde{\alpha}, \tilde{\gamma}}}$ and $\phi$ is as in \eqref{phi_c_d_def}.
	\end{lemma}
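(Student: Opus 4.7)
The plan is to exploit the fact that $S_k$ is symmetric and therefore $\max\sigma(S_k) = \sup_{|y|=1} S_k y \cdot y$, and then apply the explicit formula for the quadratic form $S_k y \cdot y$ provided by the preceding lemma. Writing $y = (v, \theta, b)$, all terms in this formula are non-positive except for the cross term $2\phi \bar{\theta}\cdot b$, so the entire argument reduces to controlling this single cross term in two different ways.

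For the first bound, $\max\sigma(S_k) \leqslant \phi$, I would simply estimate by Young's inequality
\begin{equation*}
    2\phi\bar\theta\cdot b \leqslant \phi\abs{\bar\theta}^2 + \phi\abs{b}^2 \leqslant \phi\brac{\abs{\theta}^2 + \abs{b}^2} \leqslant \phi\abs{y}^2,
\end{equation*}
and then discard all the non-positive terms in the expression for $S_k y\cdot y$ provided by \fref{Lemma}{lemma:quad_form_S}.

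For the second bound, $\max\sigma(S_k) \leqslant C_\sigma / \abs{k}^2$, the idea is to apply a weighted Young's inequality with a parameter tuned so that the $\abs{\bar\theta}^2$ contribution is completely absorbed by the negative $\theta$ terms, leaving only a $|b|^2$ residual of the desired size. More precisely, I would apply
\begin{equation*}
    2\phi\bar\theta\cdot b \leqslant \varepsilon\abs{\bar\theta}^2 + \frac{\phi^2}{\varepsilon}\abs{b}^2
\end{equation*}
with the choice $\varepsilon = \min(\tilde\alpha,\tilde\gamma)\abs{k}^2/\lambda$, so that $\phi^2/\varepsilon = C_\sigma/\abs{k}^2$. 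The key observation that makes this work is the elementary spectral bound $\abs{\theta}^2 \leqslant \lambda\abs{J_{eq}^{-1/2}\theta}^2$, valid precisely because $\lambda$ is the largest eigenvalue of $J_{eq}$ under the inertially oblong hypothesis $\lambda > \nu$. Combined with $\abs{\bar\theta}^2 \leqslant \abs{\theta}^2$ and the trivial identity $\abs{(J_{eq}^{-1/2}\theta)_\parallel}^2 + \abs{(J_{eq}^{-1/2}\theta)_\perp}^2 = \abs{J_{eq}^{-1/2}\theta}^2$, this shows that
\begin{equation*}
    \varepsilon\abs{\bar\theta}^2 \leqslant \min(\tilde\alpha,\tilde\gamma)\abs{k}^2\abs{J_{eq}^{-1/2}\theta}^2 \leqslant \tilde\alpha\abs{k}^2\vbrac{(J_{eq}^{-1/2}\theta)_\parallel}^2 + \tilde\gamma\abs{k}^2\vbrac{(J_{eq}^{-1/2}\theta)_\perp}^2,
\end{equation*}
so this contribution is completely cancelled by the negative terms in $S_k y\cdot y$.

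Putting these together yields $S_k y\cdot y \leqslant (C_\sigma/\abs{k}^2)\abs{b}^2 \leqslant (C_\sigma/\abs{k}^2)\abs{y}^2$, and combined with the first estimate we obtain the claimed bound $\max\sigma(S_k) \leqslant \min(\phi, C_\sigma/\abs{k}^2)$. There is no real obstacle in the proof; the only subtlety is picking the correct Young's weight so that the $\lambda$ coming from $J_{eq}^{-1/2}$ appears precisely in the numerator of $C_\sigma$, and noting that this requires the inertially oblong assumption to identify $\lambda$ as the largest eigenvalue of $J_{eq}$.
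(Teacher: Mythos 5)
Your proof is correct and follows essentially the same route as the paper: both start from the explicit quadratic form in \fref{Lemma}{lemma:quad_form_S}, get the bound by $\phi$ via symmetric Young's inequality, and get the bound by $C_\sigma/\abs{k}^2$ by balancing the cross term $2\phi\bar\theta\cdot b$ against the dissipative $\theta$-terms. The paper completes the square in $\bar\theta$; you use a weighted Young's inequality with weight $\varepsilon = \min(\tilde\alpha,\tilde\gamma)\abs{k}^2/\lambda$ -- these are the same computation in different clothing, and the resulting constant $C_\sigma$ is identical.

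One small correction of attribution: you claim the step $\abs{\bar\theta}^2 \leqslant \lambda\abs{J_{eq}^{-1/2}\theta}^2$ "requires the inertially oblong assumption to identify $\lambda$ as the largest eigenvalue," because you route it through $\abs{\bar\theta}^2\leqslant\abs{\theta}^2\leqslant\lambda\abs{J_{eq}^{-1/2}\theta}^2$. But the intermediate inequality $\abs{\theta}^2\leqslant\lambda\abs{J_{eq}^{-1/2}\theta}^2$ is stronger than needed. Since $J_{eq} = \diag(\lambda,\lambda,\nu)$ one has directly
\begin{equation*}
\lambda\abs{J_{eq}^{-1/2}\theta}^2 = \theta_1^2 + \theta_2^2 + \frac{\lambda}{\nu}\theta_3^2 \geqslant \abs{\bar\theta}^2
\end{equation*}
for \emph{any} $\lambda,\nu>0$, with no condition on their ordering. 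The oblong hypothesis $\lambda>\nu$ is used elsewhere in the paper (it is what makes $\phi$ real and nonzero, and what produces the instability), but not in this particular spectral estimate.
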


	\begin{proof}
		Let $k\in\R^3$ and let $y=\brac{v,\theta,b}\in\R^3\times\R^3\times\R^2$. By \fref{Lemma}{lemma:quad_form_S}
		\begin{equation}
		\label{eq:spec_bounds_S_intermediate_upper_bound}
			S\brac{k}y\cdot y
			\leqslant - \tilde{\alpha}\abs{k}^2 {\vbrac{ {\brac{J_{eq}^{-1/2} \theta }}_\parallel }}^2
			- \tilde{\gamma}\abs{k}^2 {\vbrac{ {\brac{J_{eq}^{-1/2} \theta }}_\perp }}^2
			+ 2\phi\bar{\theta}\cdot b
		\end{equation}
		from which it follows that $S\brac{k}y\cdot y \leqslant \phi\brac{\abs{\bar{\theta}}^2 + \abs{b}^2}$ and hence that $\max\sigma\brac{S_k}\leqslant\phi$.
		Now observe that
		\begin{equation}
		\label{eq:spec_bounds_S_intermediate_upper_bound_2}
			-\tilde{\alpha} \abs{k}^2 {\vbrac{ {\brac{ J_{eq}^{-1/2} \theta }}_\parallel }}^2
			-\tilde{\gamma} \abs{k}^2 {\vbrac{ {\brac{ J_{eq}^{-1/2} \theta }}_\perp }}^2
			\leqslant -\min\brac{\tilde{\alpha},\tilde{\gamma}} \abs{k}^2 {\vbrac{ J_{eq}^{-1/2} \theta }}^2
			\leqslant - \frac{1}{\lambda} \min\brac{\tilde{\alpha},\tilde{\gamma}} \abs{k}^2 \abs{\bar{\theta}}^2.
		\end{equation}
		Combining \eqref{eq:spec_bounds_S_intermediate_upper_bound} and \eqref{eq:spec_bounds_S_intermediate_upper_bound_2} tells us that, for $k\neq 0$,
		\begin{equation*}
			S\brac{k}y\cdot y
			\leqslant -\frac{\phi^2 \abs{k}^2}{C_\sigma} \abs{\bar{\theta}}^2 + 2\phi\bar{\theta}\cdot b
			= -\frac{\phi^2 \abs{k}^2}{C_\sigma} {\vbrac{ \bar{\theta} - \frac{C_\sigma}{\phi\abs{k}^2}b }}^2 + \frac{C_\sigma}{\abs{k}^2} \abs{b}^2
			\leqslant \frac{C_\sigma}{\abs{k}^2} \abs{y}^2
		\end{equation*}
		from which we deduce that $\max\sigma\brac{S_k}\leqslant \frac{C_\sigma}{\abs{k}^2}$.
	\end{proof}

	\noindent
	The bounds on $S$ from \fref{Lemma}{lemma:bounds_spectrum_S} coupled with elementary considerations from linear algebra allow us to deduce bounds on the real parts of the eigenvalues of $M_k$.

	\begin{lemma}[Bounds on the real parts of eigenvalues of $M_k$]
	\label{lemma:bounds_real_part_eval_M}
	For any $k\in\R^3$, and with $\phi$ as in \eqref{phi_c_d_def}, it holds that $\max\re\sigma\brac{M_k}\leqslant\phi$.
	\end{lemma}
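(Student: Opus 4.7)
The plan is to exploit the decomposition $M_k = S_k + A$ into its symmetric and antisymmetric parts (recorded in \eqref{eq:def_S}--\eqref{eq:def_A}) and combine it with the spectral bound on $S_k$ already obtained in \fref{Lemma}{lemma:bounds_spectrum_S}. The key elementary observation is that for a real antisymmetric matrix $A$ and any complex vector $v \in \C^8$, the Hermitian form $v^\dagger A v$ is purely imaginary, while for the real symmetric matrix $S_k$ the form $v^\dagger S_k v$ is real.

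First I would fix $k \in \R^3$ and let $\mu \in \sigma(M_k)$ with associated (possibly complex) eigenvector $v \in \C^8 \setminus \{0\}$, so $M_k v = \mu v$. Taking the Hermitian pairing with $v$ yields $\mu \abs{v}^2 = v^\dagger M_k v = v^\dagger S_k v + v^\dagger A v$. A short computation in coordinates, using $A_{ij} = -A_{ji}$ and the reality of $A$, shows that $\overline{v^\dagger A v} = -v^\dagger A v$, so $v^\dagger A v \in i \R$. Since $S_k$ is real symmetric, $v^\dagger S_k v \in \R$. Taking real parts then gives
\begin{equation*}
  \brac{\re \mu} \abs{v}^2 = v^\dagger S_k v.
\end{equation*}

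The final step is to bound the right-hand side. Writing $v = x + i y$ with $x, y \in \R^8$, the reality and symmetry of $S_k$ yield $v^\dagger S_k v = \brac{S_k x}\cdot x + \brac{S_k y}\cdot y \leqslant \max\sigma\brac{S_k}\brac{\abs{x}^2 + \abs{y}^2} = \max\sigma\brac{S_k}\abs{v}^2$. Applying \fref{Lemma}{lemma:bounds_spectrum_S}, which gives $\max\sigma\brac{S_k} \leqslant \phi$, and dividing by $\abs{v}^2 > 0$ yields $\re \mu \leqslant \phi$. Since $\mu$ was an arbitrary element of $\sigma(M_k)$, this proves $\max\re\sigma\brac{M_k} \leqslant \phi$.

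No step here poses any real obstacle: the bound on $\max\sigma(S_k)$ has already been established, and the remaining argument is the standard Rayleigh-quotient-type trick for bounding real parts of eigenvalues via the symmetric part. The one mild subtlety is simply to remember to work with \emph{complex} eigenvectors (since $M_k$ is not symmetric and may well have genuinely complex eigenvalues) and to verify carefully that the antisymmetric contribution is purely imaginary on the Hermitian form.
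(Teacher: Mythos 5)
Your argument is correct and is essentially the paper's proof: the paper delegates the Rayleigh-quotient step to an auxiliary result (\fref{Lemma}{lemma:bounds_real_part_evals_using_sym_part_mat}), which is proved by writing the eigenvector as $x+iy$ and computing $Sx\cdot x + Sy\cdot y = a(|x|^2+|y|^2)$, exactly the computation you carry out via the Hermitian form $v^\dagger M_k v$. The only difference is presentational (inline Hermitian-form language versus a separate real/imaginary-split lemma), and both then invoke \fref{Lemma}{lemma:bounds_spectrum_S} to conclude $\max\sigma(S_k)\leqslant\phi$.
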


	\begin{proof}
		This follows immediately from \fref{Lemmas}{lemma:bounds_spectrum_S} and \ref{lemma:bounds_real_part_evals_using_sym_part_mat}.
	\end{proof}

	\noindent
	To conclude this batch of spectral estimates we obtain bounds on the imaginary parts of the eigenvalues of $M_k$ as a corollary of
	the Gershgorin disk theorem (\fref{Theorem}{thm:Gershgorin}).

	\begin{lemma}[Bounds on the imaginary parts of eigenvalues of $M_k$]
	\label{lemma:bounds_im_part_eval_M}
		For any $k\in\R^3$ it holds that $\max\abs{\imp\sigma\brac{M_k}}\leqslant\frac{\sqrt{7}\tor}{2\kappa}$.
	\end{lemma}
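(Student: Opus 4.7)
The plan is to exploit the symmetric--antisymmetric decomposition $M_k = S_k + A$ recorded in \eqref{eq:def_S}--\eqref{eq:def_A}, where $S_k$ is real symmetric and $A$ is real antisymmetric and, crucially, \emph{independent} of $k$. The central observation is that the imaginary part of any eigenvalue of $M_k$ is controlled by $A$ alone, so any bound obtained this way is automatically uniform in $k$.

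Let $\lambda = x + iy$ be an eigenvalue of $M_k$ with normalized eigenvector $v \in \C^8$. Taking the Hermitian inner product with $v$ yields
\begin{equation*}
\lambda = v^{\ast} M_k v = v^{\ast} S_k v + v^{\ast} A v.
\end{equation*}
Since $S_k$ is Hermitian on $\C^8$, $v^{\ast} S_k v \in \R$; since $A$ is real antisymmetric, $A^{\ast} = A^T = -A$ is anti-Hermitian, so $v^{\ast} A v \in i\R$. Separating real and imaginary parts then gives $iy = v^{\ast} A v$, whence $|y| \leq \|A\|_{\mathrm{op}}$ where the operator norm is with respect to the standard Hermitian inner product on $\C^8$.

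It then remains to estimate $\|A\|_{\mathrm{op}}$ via the Gershgorin disk theorem (\fref{Theorem}{thm:Gershgorin}). Since $A$ is normal (as a real antisymmetric matrix, $A A^T = A^T A = -A^2$), its operator norm coincides with its spectral radius, to which the theorem applies directly. From the explicit block form $A = 0 \oplus c R_{33} \oplus d R_{22}$ with $c = (\nu/\lambda - 1)\tau/(2\kappa)$ and $d = \tau/(2\kappa)$ given in \eqref{phi_c_d_def}, and using the standing assumption $0 < \nu < 2\lambda$ to bound $|c| \leq \tau/(2\kappa)$, the Gershgorin disks of $A$ are centered at the vanishing diagonal entries with radii equal to row sums of the sparse blocks $cR_{33}$ and $dR_{22}$. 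Summing these contributions yields $\|A\|_{\mathrm{op}} \leq \tfrac{\sqrt{7}\,\tau}{2\kappa}$.

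I anticipate no substantive obstacle: the argument is a short linear algebraic computation that simply packages the Hermitian/anti-Hermitian decomposition of $M_k$ together with the elementary Gershgorin bound on the sparse, $k$-independent antisymmetric part $A$.
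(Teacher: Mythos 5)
Your core argument is correct, and it is genuinely different from the paper's. The paper invokes \fref{Corollary}{cor:bounds_im_part_evals_using_norm_antisym_part_mar}, which diagonalizes the symmetric part $S_k$ by an orthogonal conjugation, applies Gershgorin to the resulting matrix, and then bounds each Gershgorin radius by $\sqrt{n-1}\,\norm{A}{2}$ via Cauchy--Schwarz with $n=8$; this is where the $\sqrt{7}$ and the Frobenius norm enter. You instead use the Rayleigh-quotient identity $\lambda = v^\ast M_k v = v^\ast S_k v + v^\ast A v$ for a unit eigenvector $v$, observe that the two inner products are respectively real and purely imaginary, and conclude $\vbrac{\imp\lambda}\leqslant\norm{A}{\mathrm{op}}$. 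This is precisely the imaginary-part analogue of the paper's \fref{Lemma}{lemma:bounds_real_part_evals_using_sym_part_mat}, and it is both shorter and sharper: it eliminates both the Frobenius norm and the factor $\sqrt{n-1}$.

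The one place where you slip is the final numerical step. Your claim that ``summing these contributions yields $\norm{A}{\mathrm{op}}\leqslant\sqrt{7}\tau/(2\kappa)$'' is not what Gershgorin gives; there is no summation over rows. Each Gershgorin radius of $A = 0 \oplus c R_{33} \oplus d R_{22}$ is either $0$, $\abs{c}$, or $\abs{d}$, and all disks are centered at the origin, so Gershgorin gives $\rho(A)\leqslant\max(\abs{c},\abs{d}) = d = \tau/(2\kappa)$ (using $\abs{c}\leqslant d$ from $0<\nu<2\lambda$). Since $A$ is normal, $\norm{A}{\mathrm{op}}=\rho(A)$, and (as you could also read off directly from the spectrum $\sigma(A)=\{0,\pm ic,\pm id\}$) you in fact obtain $\vbrac{\imp\sigma(M_k)}\leqslant\tau/(2\kappa)$, which is strictly stronger than the lemma's stated bound. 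The conclusion therefore holds a fortiori, but the $\sqrt{7}$ is an artifact of the paper's cruder corollary and does not arise in your argument; presenting your bound as exactly $\sqrt{7}\tau/(2\kappa)$ obscures that you have actually proved something better.
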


	\begin{proof}
		This follows from \fref{Corollary}{cor:bounds_im_part_evals_using_norm_antisym_part_mar} since
		$$\norm{A}{2}^2 = 2\brac{c^2+d^2} = {\brac{\frac{\tor}{2\kappa}}}^2 \brac{1 - 2\nu\brac{\nu - 2\lambda}} \leqslant {\brac{\frac{\tor}{2\kappa}}}^2.$$
	\end{proof}

	We now record some useful facts about the characteristic polynomial $p$ of $M_k$.
	Computing $p$ was done by using a computer algebra system, and we thus record $M_k$ in \fref{Appendix}{sec:M_k} in a form which can readily be used for computer-assisted algebraic manipulations.

	Upon computing $p$ we observe that it is a polynomial in $k$ of degree 10 and that it only depends on even powers of $\abs{\bar{k}}$ and $k_3$.
	Therefore we may write
	\begin{equation}
	\label{eq:p_sum_r_q}
		p\brac{x,k} = \sum_{q=0}^5 r_q \brac{x, \abs{\bar{k}}, k_3}
	\end{equation}
	where each $r_q$ is a polynomial in $\brac{x, \abs{\bar{k}}, k_3}$ which is \emph{homogeneous of degree $2q$ in $\brac{\abs{\bar{k}}, k_3}$}.
	In particular:
	\begin{equation}
	\label{eq:r_5_and_r_4}
		r_5 \brac{x, \abs{\bar{k}}, k_3} = C_0 x \brac{x^2 + d^2} \abs{k}^{10}\text{ and }
		r_4 \brac{x, \abs{\bar{k}}, k_3} = \abs{k}^6 \brac{t_1\brac{x} \abs{\bar{k}}^2 + t_2\brac{x} k_3^2}
	\end{equation}
	where
	\begin{equation}
	\label{eq:t_i}
		t_i \brac{x} = x^2 \brac{-C_{i,0} + C_{i,1} x + C_{i,2} \brac{x^2 + d^2}}
	\end{equation}
	and
	\begin{align*}
		C_0 &=
			{\brac{\mu + \kappa/2}}^2
			\brac{\alpha + 4\beta/3}
			{\brac{\beta + \gamma}}^2
			/(\nu\lambda^2)
		,\\
		C_{1,0} &=
			\brac{\alpha + 5\beta/3 + \gamma}
			\brac{\beta + \gamma}
			{\brac{\mu + \kappa/2}}^2
			\phi/(\nu\lambda)
		,\,
		C_{2,0} =
			2\brac{\alpha + 4\beta/3}
			\brac{\beta + \gamma}
			\brac{\mu + \kappa/2}
			\phi/(\nu\lambda)
		,\\
		C_{1,1} &= C_{2,1} =
			2 \kappa
			\brac{\mu + \kappa/2}
			\brac{\beta + \gamma}
			\brac{
				2\mu\brac{\alpha + 4\beta/3}
				+ \brac{\mu + \kappa/2}\brac{\beta + \gamma}
			}
			/(\nu\lambda^2)
		,\\
		C_{1,2} &=
			\brac{\mu + \kappa/2}
			\brac{\beta + \gamma}
			\brac{
				2\brac{\alpha + 4\beta/3}\brac{\beta + \gamma}
				+ \brac{\mu + \kappa/2} \brac{
					\brac{\alpha + 5\beta/3 + \gamma} \lambda
					+ \brac{\alpha + 4\beta/3} \nu
				}
			}
			/(\nu\lambda^2)
		\text{ and}\\
		C_{2,2} &=
			2\brac{\mu + \kappa/2}
			\brac{\beta + \gamma}
			\brac{
				\brac{\alpha + 4\beta/3}\brac{\beta + \gamma}
				+ \brac{\mu + \kappa/2} \brac{
					\brac{\alpha + 4\beta/3} \lambda
					+ \brac{\beta + \gamma} \nu/2
				}
			}
			/(\nu\lambda^2)
		.
	\end{align*}
	The exact dependence of these constants on the various physical parameters is not of concern here,
	since all that matters is that all these constants are strictly positive, i.e. $C_0, C_{i,j} > 0$ for all $i,j$.

	We now use Rouch\'{e}'s Theorem (c.f. \fref{Theorem}{thm:Rouche}) and our explicit expressions for the leading factors (with respect to $\abs{k}$) of the characteristic polynomial $p$ of $M_k$
	to control the number of eigenvalues remaining within bounded neighbourhoods of the origin as $\abs{k}$ becomes large.
	This is stated precisely in \fref{Lemma}{lemma:isolation_evals} below, which is another ingredient of the proof of \fref{Proposition}{prop:max_unstable_evals}.

	\begin{lemma}[Isolation of some eigenvalues of $M$ for large wavenumbers]
	\label{lemma:isolation_evals}
		For any $R > \frac{\tor}{2\kappa}$ there exist $K_I > 0$ such that for any $k\in\R^3$, if $\abs{k} > K_I$
		then there are precisely three eigenvalues of $M_k$ in an open ball of radius $R$ about the origin.
	\end{lemma}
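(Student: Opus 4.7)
The plan is to apply Rouch\'e's theorem (\fref{Theorem}{thm:Rouche}) to the characteristic polynomial $p\brac{\,\cdot\,, k}$ on the circle $\cbrac{\abs{x} = R}$, using as the comparison polynomial the highest-order-in-$\abs{k}$ piece of $p$. The decisive feature, visible in \eqref{eq:r_5_and_r_4}, is that although $p\brac{\,\cdot\,,k}$ has degree $8$ in $x$, its leading-in-$\abs{k}$ component $r_5$ is of degree only $3$ in $x$; once we strip off the overall factor $\abs{k}^{10}$, the leading behaviour is governed by the cubic $f\brac{x} \defeq C_0\, x\brac{x^2+d^2}$, whose three roots $0, \pm i d$ are precisely the bounded eigenvalues we want to isolate.

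Concretely, I would set $q_k\brac{x} \defeq p\brac{x,k}/\abs{k}^{10}$ for $k \neq 0$ and observe from \eqref{eq:p_sum_r_q} and \eqref{eq:r_5_and_r_4} that
\begin{equation*}
q_k\brac{x} = f\brac{x} + \sum_{q=0}^{4} \frac{r_q\brac{x, \abs{\bar k}, k_3}}{\abs{k}^{10}}.
\end{equation*}
Because each $r_q$ is a polynomial in $x$ whose coefficients are homogeneous polynomials of degree $2q$ in $\brac{\abs{\bar k}, k_3}$, the $q$-th term in this sum is bounded by a constant times $\abs{k}^{2q-10}$ on any fixed compact subset of $\C$, uniformly in the direction $k/\abs{k}$. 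Consequently $q_k \to f$ uniformly on $\overline{B\brac{0, R}}$ as $\abs{k} \to \infty$.

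To conclude I would exploit the hypothesis $R > d = \frac{\tor}{2\kappa}$: the three roots $0, \pm i d$ of $f$ all lie strictly inside $B\brac{0, R}$, and since $\tor > 0$ they are distinct and hence simple. In particular $\abs{f}$ attains a positive minimum $m > 0$ on $\cbrac{\abs{x} = R}$, and by the uniform convergence established above one may choose $K_I$ so that $\sup_{\abs{x}=R}\abs{q_k\brac{x} - f\brac{x}} < m$ whenever $\abs{k} > K_I$. Rouch\'e's theorem then yields that $q_k$ and $f$ have the same number of zeros, counted with multiplicity, inside $B\brac{0,R}$, namely three; since the zeros of $q_k$ coincide with those of $p\brac{\,\cdot\,,k}$, that is, with the eigenvalues of $M_k$, this proves the lemma.

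The proof is a textbook Rouch\'e application once the correct comparison polynomial $f$ is identified. The only point needing a little care is the uniformity of the convergence $q_k \to f$ not merely in $\abs{k}$ but also in the direction $k/\abs{k}$, which is secured by the polynomial and homogeneous structure of the $r_q$ recorded after \eqref{eq:p_sum_r_q}, rather than by any soft compactness argument.
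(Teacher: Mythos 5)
Your proof is correct and is essentially the paper's argument, merely rescaled: you apply Rouch\'e to $q_k = p/\abs{k}^{10}$ with comparison function $f = C_0 x(x^2+d^2)$, whereas the paper compares $r_5$ against $s = p - r_5$ directly, but these are identical up to dividing both sides of the inequality $\abs{s} < \abs{r_5}$ by $\abs{k}^{10}$. The key observations — that $r_5$ dominates for large $\abs{k}$ because $s$ has strictly lower degree in $k$, and that the cubic $C_0 x(x^2 + d^2)$ has exactly three roots $\{0, \pm id\} \subset B_R$ — are the same in both.
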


	\begin{proof}
		Let $k\in\R^3$ be nonzero, let $p\brac{\cdot,k}$ denote the characteristic polynomial of $M_k$, and let us write $s\defeq p-r_5$ for $r_5$ as in \eqref{eq:r_5_and_r_4}.
		The key observations are that $r_5$ has precisely three roots in $B_R$ when $R>\frac{\tor}{2\kappa}$ and that $s$ is lower-order in $k$ than $r_5$.
		The result then follows from Rouch\'{e}'s Theorem since $r_5$ dominates $s$ for large $\abs{k}$.

		More precisely, let $R > d = \frac{\tor}{2\kappa}$ and let $\tilde{r}_5 \brac{x} \defeq C_0 x \brac{x^2+d^2}$ for $C_0$ as in \eqref{eq:r_5_and_r_4} such that
		$r_5\brac{x,k} = \tilde{r}_5\brac{x}\abs{k}^{10}$. Since $\tilde{r}_5$ is a polynomial whose roots are away from $\partial B_R$,
		since $s\brac{x,k}$ is a polynomial of degree 8 in $k$, and since $\partial B_R$ is compact, it follows that $C_r \defeq \inf_{\partial B_R} \abs{\tilde{r}_5} > 0$
		and that
		$
			C_s \defeq \sup\limits_{\substack{x\in\partial B_R\\ k\neq 0}} \frac{s\brac{x,k}}{\abs{k}^8} < \infty.
		$

		\vspace{-0.675cm}
		\noindent
		So pick $K_I\defeq\sqrt{\frac{C_s}{C_r}}$ and observe that for any $k\in\Z^3$, if $\abs{k} > K_I$ then, on $\partial B_R$,
		\begin{equation}
		\label{eq:isolation_Rouche_bound}
			\abs{r_5\brac{\cdot,k}} = \abs{\tilde{r}_5} \abs{k}^{10} > C_r \abs{k}^8 K_I^2 \geqslant \frac{C_r}{C_s} K_I^2 \abs{s\brac{\cdot,k}} = \abs{s\brac{\cdot,k}}.
		\end{equation}
		Since $r_5\brac{\cdot,k}$ has three roots in $B_R$, namely $0$ and $\pm\frac{\tor}{2\kappa}$,
		we may use \eqref{eq:isolation_Rouche_bound} to deduce from \fref{Theorem}{thm:Rouche} that $p\brac{\cdot,k}$ has three roots in $B_R$.
	\end{proof}

	In \fref{Proposition}{prop:trajectories_evals} below we use the Implicit Function Theorem
	to identify the trajectories of some unstable eigenvalues of $M_k$ when $\abs{k}$ is large.
	In particular we will see in the proof of \fref{Proposition}{prop:max_unstable_evals} that, combining this result with earlier results from this section,
	we may deduce that these eigenvalues are the \emph{most} unstable eigenvalues of $M_k$ for large $k$.
	Here we say that an eigenvalue is unstable when it has strictly positive real part.

	\begin{prop}[Trajectories of some eigenvalues of $M$ for large wavenumbers]
	\label{prop:trajectories_evals}
	There exists $K_T > 0$ and a function $z : \cbrac{k\in\R^3 : \abs{k} > K_T} \to \C$, which is continuously differentiable in the real sense
	(i.e. after identifying $\C$ with $\R^2$ in the canonical way), such that
	\begin{enumerate}
		\item	for every $k\in\R^3$, if $\abs{k} > K_T$ then
			\begin{enumerate}
				\item	$z\brac{k}$ and $\overline{z}\brac{k}$ are eigenvalues of $M\brac{k}$ and
				\item	$\re z\brac{k} > 0$, and
			\end{enumerate}
		\item	$z\brac{k} \to \frac{i\tor}{2\kappa}$ as $\abs{k}\to\infty$.
	\end{enumerate}
	\end{prop}

	\begin{proof}
		Recall that $d=\frac{\tor}{2\kappa}$ and let $p\brac{\cdot,k}$ denote the characteristic polynomial of $M_k$.
		We proceed in three steps: first we define $s$ to be essentially $\abs{\varepsilon}^5 p \brac{\,\cdot\,, \varepsilon^{-1/2}}$
		(such that the study of $s$ about zero is equivalent to the study of $p$ about infinity) and verify that we may apply the Implicit Function Theorem to $s$
		about $\brac{x, \varepsilon} \sim \brac{id, 0}$, second we deduce from explicit computations of $p$ (namely \eqref{eq:r_5_and_r_4}) that, for small nonzero $\varepsilon$,
		$s$ has two roots with strictly positive real parts, and third we write $k\sim\varepsilon^{-1/2}$ to turn our result from step 2 about $\varepsilon\sim 0$
		into a result about $k\sim\infty$ which allows us to conclude that, for large $\abs{k}$, $p$ has two roots with strictly positive real part.

		\textbf{Step 1:}
		Recall (from \eqref{eq:p_sum_r_q} and the preceding discussion) that $p$ only depends on $\abs{\bar{k}}$ and $k_3$,
		so we may write $p\brac{x,k} = \tilde{p}\brac{x,\abs{\bar{k}},k_3}$.
		Now define, for any $x\in\C$ and any $\varepsilon=\brac{\varepsilon_h,\varepsilon_v}\in\R^2_{>0}$,
		$s\brac{x,\varepsilon} \defeq \abs{\varepsilon}_1^5 \,\tilde{p} \brac{x,\frac{\brac{\sqrt{\varepsilon_h},\sqrt{\varepsilon_v}}}{\abs{\varepsilon}_1}}$,
		where $\abs{\,\cdot\,}_1$ denotes the $l^1$ norm.
		It follows from \eqref{eq:p_sum_r_q} that $s\brac{x,\varepsilon} = \sum_{q=0}^5 u_q\brac{x,\varepsilon}$
		for $u_{5-q}\brac{x,\varepsilon}\defeq\abs{\varepsilon}_1^5\, r_q\brac{x, \frac{\sqrt{\varepsilon_h}}{\abs{\varepsilon}_1}, \frac{\sqrt{\varepsilon_h}}{\abs{\varepsilon}_1}}$.
		Since the only dependence of $r_q$ on $k$ is through $\brac{\abs{\bar{k}}, k_3}$, i.e. since $r_q\brac{x,\abs{\bar{k}},k_3} = \tilde{r}_q\brac{x,\abs{\bar{k}}^2,k_3^2}$
		for some $\tilde{r}_q$, we may write $r_q\brac{x,\abs{\bar{k}},k_3} = C_q\brac{x}\bullet{\brac{\abs{\bar{k}}^2,k_3^2}}^{\otimes q}$ for some polynomial $C_q$.
		In particular, it follows that
		\begin{equation*}
			u_2\brac{x,\varepsilon} = C_3\brac{x}\bullet\frac{\varepsilon^{\otimes 3}}{\abs{\varepsilon}_1},\text{ }
			u_3\brac{x,\varepsilon} = \abs{\varepsilon}_1 \,C_2\brac{x} \bullet \varepsilon^{\otimes 2},\text{ }
			u_4\brac{x,\varepsilon} = \abs{\varepsilon}_1^3 \,C_1 \brac{x}, \text{ and }
			u_5\brac{x,\varepsilon} = \abs{\varepsilon}_1^5 \,C_0\brac{x}
		\end{equation*}
		such that, for $q\geqslant 2$, $u_q\brac{x,0}=0$ and both $\partial_x u_q \brac{x,0} = 0$ and $\nabla_\varepsilon u_q \brac{x,0} = 0$.
		Moreover we may compute, using \eqref{eq:r_5_and_r_4}, that
		\begin{equation}
		\label{eq:u_0_1}
			u_0\brac{x,\varepsilon} = C_0 x \brac{x^2+d^2} \eqdef u_0\brac{x}
			\text{ and }
			u_1\brac{x,\varepsilon} = \brac{t_1\brac{x}, t_2\brac{x}} \cdot \varepsilon \eqdef \bar{u}_1\brac{\varepsilon}.
		\end{equation}
		So finally, for $v \defeq s - \brac{u_0 + u_1} = \sum_{q=2}^5 u_q$, we have that $s\brac{x,\varepsilon} = u_0\brac{x} + \bar{u}_1\brac{x}\cdot\varepsilon + v\brac{x,\varepsilon}$
		where $v\brac{x,0}=0$ and both $\partial_x v\brac{x,0} = 0$ and $\nabla_\varepsilon v\brac{x,0}=0$.
		In particular, note that $s\brac{id,0} = u_0\brac{id} = 0$ and that $\partial_x s\brac{id,0} = u_0'\brac{id} = -2C_0 d^2 \neq 0$.

		\textbf{Step 2:}
		By step 1 we may apply \fref{Theorem}{thm:ift} to $s$ about $id$
		to deduce that there exists a number $\xi > 0$ and a function $w : B_{1,\xi}^+ \to \C$
		which is continuously differentiable in the real sense, where $B_{1,\xi}^+$ is the intersection of the first quadrant and the $l^1$-ball of radius $\xi$,
		i.e. $B_{1,\xi}^+ \defeq \setdef{\brac{\varepsilon_h,\varepsilon_v}}{\varepsilon_h,\varepsilon_v > 0 \text{ and }\varepsilon_h + \varepsilon_v < \xi}$,
		such that $w\brac{0} = id$, $s\brac{w\brac{\varepsilon},\varepsilon} = 0$ for every $\varepsilon\in B_{1,\xi}^+$,
		and $\nabla_\varepsilon w\brac{0} = \frac{-\nabla_\varepsilon s\brac{id,0}}{\partial_x s\brac{id,0}}$.
		Moreover we may compute from \eqref{eq:t_i} and \eqref{eq:u_0_1} that 
		$\nabla_\varepsilon w \brac{0} = \frac{1}{2C_0} \begin{pmatrix} C_{1,0} + i C_{1,1}d \\ C_{2,0} + i C_{2,1} d \end{pmatrix}$,
		such that $\re\nabla_\varepsilon w\brac{0}\in\R^2_{>0}$.
		It follows that there exists $0<\sigma<\xi$ such that $\re w\brac{\varepsilon} > 0$ for all $\varepsilon\in B_{1,\sigma}^+$.

		\textbf{Step 3:}
		Pick $K_T \defeq 1/\sqrt{\sigma}$ and define $z$ via, for every $k\in\R^3$ such that $\abs{k}>K_T$, $z\brac{k}\defeq w\brac{\varepsilon\brac{k}}$
		for $\varepsilon\brac{k}\defeq\frac{1}{\abs{k}^4} \brac{\abs{\bar{k}}^2,k_3^2}$.
		Note that $z$ is well-defined on $\cbrac{k\in\R^3:\abs{k}>K_T}$ since, for every $k\in\R^3$, $\abs{k}>K_T \iff \abs{\varepsilon\brac{k}} = 1 / \abs{k}^2 < \sigma$.
		Now observe that, for every $k\in\R^3$ such that $\abs{k}>K_T$,
		$
			\tilde{p}\brac{z\brac{k},k} = \frac{1}{\abs{\varepsilon}_1^5} s\brac{w\brac{\varepsilon\brac{k}},\varepsilon\brac{k}} = 0
		$,
		i.e. indeed $z\brac{k}$ is a root of $p\brac{\cdot,k}$ and hence an eigenvalue of $M_k$.
		Since $M_k$ is a matrix with real entries, we may deduce that $\bar{z}\brac{k}$ is also an eigenvalue of $M_k$.
		Moreover it follows from step 2 above that $\re z\brac{k} > 0$ for every $\abs{k}>K_T$.
		Finally, note that since $w\brac{0}=id$, since $w$ is continuous, and since $\varepsilon\brac{k}$ is continuous away from $k=0$, we may conclude that
		$z\brac{k} \to id$ as $k\to\infty$.
	\end{proof}

	We now have all the ingredients in hand to prove one of the two key results of this section, namely \fref{Proposition}{prop:max_unstable_evals}.
	This result tells us that there exists a \emph{most} unstable eigenvalue of $M_k$, i.e. an eigenvalue with largest strictly positive real part.

	\begin{prop}[Maximally unstable eigenvalues]
	\label{prop:max_unstable_evals}
		There exist $k_*\in\Z^3$ and $w_*\in\C$ with strictly positive real part such that
		\begin{enumerate}
			\item	$w_*$ is an eigenvalue of $M\brac{k_*}$ and
			\item	for every $k\in\Z^3$ and every eigenvalue $w$ of $M\brac{k}$, $\re w \leqslant \re w_*$.
		\end{enumerate}
		We define $\eta_* \defeq \re w_*$.
	\end{prop}

	\begin{proof}
		The key observations are that:
		(i) by combining \fref{Proposition}{prop:trajectories_evals} and \fref{Lemmas}{lemma:bounds_real_part_eval_M} and \ref{lemma:bounds_im_part_eval_M},
		we can show that for $\abs{k}$ large enough, the eigenvalues whose trajectory can be obtained via the implicit function theorem in \fref{Proposition}{prop:trajectories_evals}
		are the most unstable eigenvalues (i.e those with the largest real part) and that
		(ii) by \fref{Proposition}{prop:trajectories_evals} we know that $\re z\brac{k} \to 0$ as $\abs{k}\to\infty$.
		We prove the first observation in step 1 below, and in step 2 we use the first step and the second observation to conclude.

		\textbf{Step 1:}
		We show that there exists $K_* > 0$ such that, for every $\abs{k} > K_*$,
		$
			\re z\brac{k} = \max\limits_{w\in\sigma\brac{M\brac{k}}} \re w.
		$
		Pick $R > \phi^2 + 7d^2$ and note that since $R > d = \frac{\tor}{2\kappa}$ we may pick $K_I = K_I \brac{R}$ as in \fref{Lemma}{lemma:isolation_evals}.
		Let $K_* \defeq \max\brac{K_I, K_T}$ for $K_T$ as in \fref{Proposition}{prop:trajectories_evals},
		let $H$ denote the half-slab $\setdef{w\in\C}{\re z \leqslant \phi, \abs{\imp z} \leqslant \sqrt{7}d}$,
		and let $B_R \subseteq \C$ denote the open ball of radius $R$ about the origin.

		Let $k\in\Z^3$ such that $\abs{k} > K_*$.
		By \fref{Lemmas}{lemma:bounds_real_part_eval_M} and \ref{lemma:bounds_im_part_eval_M} we know that all the eigenvalues of $M\brac{k}$ are in $H$,
		and by \fref{Lemma}{lemma:isolation_evals} we know that exactly three eigenvalues of $M\brac{k}$ are in $B_R \cap H$.
		Moreover, by \fref{Proposition}{prop:trajectories_evals} we know that the three eigenvalues of $M\brac{k}$ in $B_R \cap H$ are precisely $0$
		(since $M\brac{k} \brac{k,0,0} = 0$), $z\brac{k}$, and $\bar{z\brac{k}}$, for $z$ as in \fref{Proposition}{prop:trajectories_evals}.

		In particular, since $R > \phi^2 + 7d^2$ such that no points in the half-slab $H$ have larger real parts than all points in $B_R \cap H$, it follows that
		indeed the eigenvalues of $M\brac{k}$ with largest real part are $z\brac{k}$ and $\bar{z\brac{k}}$.

		\begin{figure}[h!]
			\centering
			\includegraphics{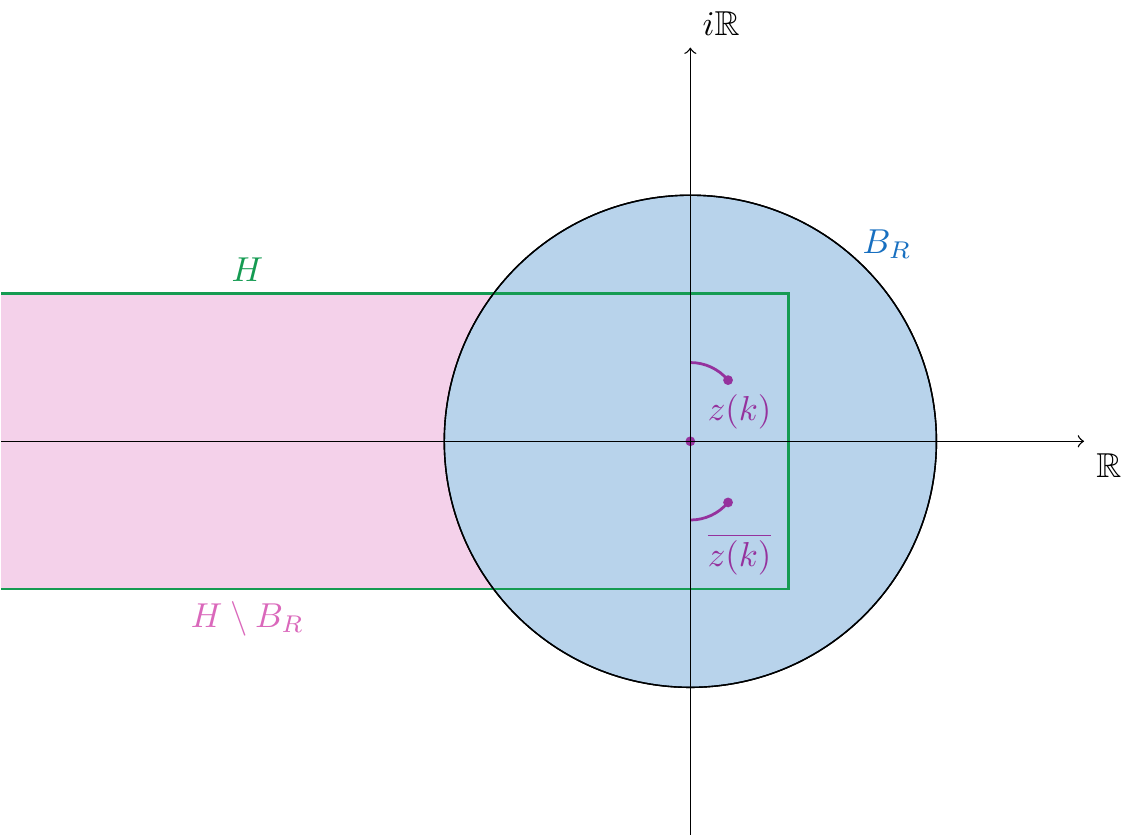}
			\caption{A pictorial summary of step 1 of the proof of \fref{Proposition}{prop:max_unstable_evals}.}
			\label{fig:diag-proof-max-unstable-mode}
		\end{figure}

		\textbf{Step 2:}
		We want to show that the supremum
		\begin{equation*}
			\sup_{k\in\Z^3} \;\max_{w\in\sigma\brac{M\brac{k}}} \re w
		\end{equation*}
		is strictly positive and attained. It is clearly strictly positive since for any $k\in\Z^3$ such that $\abs{k} > K_T$
		it follows from \fref{Proposition}{prop:trajectories_evals} that $z\brac{k}$ is an eigenvalue of $M\brac{k}$ with strictly positive real part.
		To see that this supremum is attained, we write for simplicity
		\begin{equation*}
			s\brac{E} \defeq \sup_{k\in E} \;\max_{w\in\sigma\brac{M\brac{k}}} \re w
		\end{equation*}
		for any $E\subseteq \Z^3$. We thus want to show that $s\brac{\Z^3}$ is attained.
		On one hand, by step 1, the supremum $s\brac{\cbrac{k\in\Z^3 : \abs{k} > K_*}}$
		is achieved. Indeed, we may pick the eigenvalue $z\brac{k_\text{crit}}$ of $M\brac{k_\text{crit}}$ corresponding to any $k_\text{crit}$ such that
		$\abs{k_\text{crit}}$ is equal to the smallest integer strictly larger than $K_*$ which can be written as a sum of squares of integers.
		On the other hand the supremum $s\brac{\cbrac{k\in\Z^3 : \abs{k} \leqslant K_*}}$
		is attained since it is taken over a finite set.
		Since $\Z^3$ is the union of $\cbrac{k\in\Z^3 : \abs{k} > K_*}$ and $\cbrac{k\in\Z^3 : \abs{k} \leqslant K_*}$
		we may conclude that the supremum $s\brac{\Z^3}$ is attained.
	\end{proof}

	We conclude this section with the second of its two key results: \fref{Proposition}{prop:unif_bound_mat_exp}.
	This result is essential in the construction of the semigroup associated with the linearized operator.
	This construction is performed in \fref{Section}{sec:semigroup} below.

	\begin{prop}[Uniform bound on the matrix exponentials]
	\label{prop:unif_bound_mat_exp}
		Let $\eta_*$ be as in \fref{Proposition}{prop:max_unstable_evals}.
		There exists $C_S > 0$ such that for every $k\in\Z^3$ and every $t>0$, $\abs{e^{t M_k}}\leqslant C_S \brac{1+t^8} e^{\eta_* t}$.
		As a consequence, for every $\varepsilon > 0$ there exists $C_S \brac{\varepsilon} > 0$ such that for every $k\in\Z^3$ and every $t>0$,
		$\abs{e^{t\hat{\B}_k}}\leqslant C_S \brac{\varepsilon}  e^{\brac{\eta_* + \varepsilon} t}$.
	\end{prop}

	\begin{proof}
		Naively, one may seek to use the bound from \fref{Corollary}{cor:bounds_mat_exp} to control $e^{t M_k}$.
		However, this bounds only holds up to a constant \emph{dependent on $k$}.
		To circumvent this issue, we observe that alternatively one may bound $e^{t M_k}$ using its symmetric part (as per \fref{Lemma}{lemma:bounds_real_part_evals_using_sym_part_mat}).
		Coupling this observation with the fact that we have an upper bound which decays as $\abs{k}^{-2}$ for the spectrum of $S_k$, namely \fref{Lemma}{lemma:bounds_spectrum_S},
		we see that for sufficiently large $\abs{k}$ the exponential $e^{tM_k}$ grows at most like $e^{\eta_* t}$.
		It thus suffices to use \fref{Corollary}{cor:bounds_mat_exp} for the finitely many modes with non-large $\abs{k}$, in which case the dependence of the constant on $k$ is harmless.

		More precisely: let $K_S \defeq \sqrt{\frac{C_\sigma}{\eta_*}}$ where $C_\sigma$ is as in \fref{Lemma}{lemma:bounds_spectrum_S},
		write $C\brac{k} \defeq C\brac{M_k}$ for $C\brac{M}$ as in \fref{Corollary}{cor:bounds_mat_exp}, and let
		$
			C_S \defeq \max\brac{1, \max\limits_{\abs{k} < K_S} C\brac{k}} > 0.
		$
		Then, for every $k\in\Z^3$,
		if $\abs{k} \geqslant K_S$ then
		$\frac{C_\sigma t}{\abs{k}^2} \leqslant \frac{C\sigma t}{K_S^2} = \eta_* t$
		and hence, by \fref{Lemmas}{lemma:bounds_real_part_evals_using_sym_part_mat} and \ref{lemma:bounds_spectrum_S},
		$
			\norm{e^{tM_k}}{\Leb\brac{l^2,\,l^2}}
			\leqslant e^{\frac{C_\sigma t}{\abs{k}^2}}
			\leqslant e^{\eta_* t},
		$
		and if $\abs{k} < K_S$ then by \fref{Corollary}{cor:bounds_mat_exp}, the choice of $C_S$, and \fref{Proposition}{prop:max_unstable_evals}
		\begin{equation*}
			\norm{e^{tM_k}}{\Leb\brac{l^2,\,l^2}}
			\leqslant C\brac{k} \brac{1 + t^8} e^{\brac{\max\re\sigma\brac{M_k}} t}
			\leqslant C_S \brac{1 + t^8} e^{\eta_* t}
		\end{equation*}
		from which the first part of the result follows.
		To obtain the second part we simply use the fact that polynomials of arbitrarily large degree can be controlled by exponentials of arbitrarily slow growth,
		i.e. the fact that for every $j\in\N$ and every $\varepsilon > 0$ there exists $C = C\brac{j,\varepsilon} > 0$ such that, for every $t\geqslant 0$, $1+t^j \leqslant Ce^{\varepsilon t}$.
	\end{proof}

\subsection{The semigroup}
\label{sec:semigroup}

	In this section we proceed in a standard fashion and use \fref{Proposition}{prop:unif_bound_mat_exp}
	to construct the semigroup associated with the linearized problem as recorded after Leray projection in \eqref{eq:PDE_compact_not_lin_semigroup}.

	\begin{prop}[Semigroup for the linearization]
	\label{prop:semigroup_lin_prob}
		Let $\eta_*$ be as in \fref{Proposition}{prop:max_unstable_evals}. For every $t\geqslant 0$ we define the operator $e^{t\B}$
		on $L^2 \brac{\T^3,\,\R^8}$ via the Fourier multiplier
		${\brac{e^{t\B}}}^\wedge \brac{k} \defeq e^{t\hat{\B}_k}$ for every $k\in\Z^3$ and we define $e^{t\Leb}$ as
		$
			e^{t\Leb} \defeq e^{t\B} \oplus e^{t\sbrac{\bar{\Omega}_{eq},\,\cdot\,}} \oplus 1
		$,
		i.e. for every $\brac{f, \JO, J_{33}} \in L^2 \brac{\T^3,\, \R^8} \times L^2 \brac{\T^3,\, \R^{2 \times 2}} \times L^2 \brac{\T^3,\, \R}$,
		$
			e^{t\Leb} \brac{ f , \JO , J_{33} }
			\defeq \brac{ e^{t\B} f , e^{t\sbrac{\bar{\Omega}_{eq},\,\cdot\,}} \JO , J_{33} }
		$.

		Then ${\brac{e^{t\Leb}}}_{t\geqslant 0}$ is a semigroup on $L^2$ and for every $\varepsilon > 0$ it is an $\brac{\eta_* + \varepsilon}$-contractive semigroup with domain
		$
		H^2 \brac{\T^3,\,\R^6} \times L^2 \brac{\T^3,\,\R^2} \times L^2\brac{\T^3,\,\R^{2\times 2}} \times L^2 \brac{\T^3,\,\R}
			\eqdef\mathfrak{D}
		$
		and generator $\Leb$.

		Moreover, for every $\varepsilon > 0$ there exists a constant $C_S \brac{\varepsilon} > 0$ such that,
		for every $p,q,r \geqslant 0$ and every $t\geqslant 0$, $e^{t\Leb}$ is a bounded operator on
		$
			H^{p,q,r} \defeq H^p \brac{\T^3, \R^8} \times H^q \brac{\T^3, \R^{2\times 2}} \times H^r \brac{\T^3, \R}
		$
		such that for any $\brac{f,\JO,J_{33}}\in H^{p,q,r}$,
		$
			\norm{
				e^{t\Leb} \brac{f, \JO, J_{33}}
			}{
				H^{p,q,r}
			}^2
			\leqslant C^2_S \brac{\varepsilon} e^{2\brac{\eta_* + \varepsilon}t} \norm{\brac{f,\JO,J_{33}}}{H^{p,q,r}}^2
		$, where
		\begin{equation*}
			\norm{\brac{f,\JO,J_{33}}}{H^{p,q,r}}^2 \defeq \norm{f}{H^p}^2 + \norm{\JO}{H^q}^2 + \norm{J_{33}}{H^r}^2.
		\end{equation*}
		Finally: the semigroup propagates incompressibility.
		More precisely: let
		\begin{equation*}
			X_0 = \brac{u_0, \omega_0, a_0, \JO_0, {\brac{J_{33}}}_0}
			\in L^2 \brac{\T^3,\, \R^3} \times L^2 \brac{\T^3,\, \R^3} \times L^2 \brac{\T^3,\, \R^2} \times L^2 \brac{\T^3,\, \R^{2 \times 2}} \times L^2 \brac{\T^3,\, \R}
		\end{equation*}
		and let
		$X\brac{t,\cdot} = \brac{u, \omega, a, \JO, J_{33}} \brac{t,\cdot} \defeq e^{t\Leb} X_0$ for all $t > 0$.
		If $u_0$ is incompressible, in a distributional sense, then $u\brac{t,\cdot}$ is incompressible for all time $t > 0$.
	\end{prop}

	\begin{proof}
		\textbf{Step 1:}
		We begin by constructing the semigroup $e^{t\B}$.
		Note that, in this proof, all matrix norms are norms in $\Leb\brac{l^2,\,l^2}$.
		To construct this semigroup we will use \fref{Proposition}{prop:const_semigroup_Fourier} and must therefore verify that
		(i) for every $\varepsilon > 0$ there exists $C_S\brac{\varepsilon} > 0$ such that for every $k\in\Z^3$ and every $t>0$,
		$\norm{e^{t\hat{\B}_k}}{} \leqslant C_S\brac{\varepsilon} e^{\brac{\eta_* + \varepsilon}t}$ and that
		(ii) there exists $C_D > 0$ such that for every $\brac{v,\theta,b}\in\R^3\times\R^3\times\R^2$,
		$\vbrac{\hat{\B}_k \brac{v,\theta,b}}\leqslant C_D\brac{\jap{k}{4}\brac{\abs{u}^2+\abs{\omega}^2} + \abs{a}^2}$.
		Note that (ii) follows immediately from the expression provided for $\hat{\B}$ in \eqref{eq:def_B}.
		To obtain (i) we note that it follows from \fref{Lemmas}{lemma:B_Q_act_on_V_k} and \ref{lemma:sim_mat_act_on_quot_spaces} that
		\begin{equation*}
			\hat{\B}^n_k
			= {\brac{ \bar{Q}_k M_k Q_k }}^n
			= \bar{Q}_k M^n_k Q_k
			\text{ for every } n \geqslant 1
		\end{equation*}
		whilst $\hat{B}_k^0 = \id = \proj_{V_k} + \proj_{V_k^\perp} = \proj_{V_k} + \bar{Q}_k M_k^0 Q_k$. Therefore
		\begin{equation}
		\label{eq:exp_B_sim_exp_M}
		e^{t\hat{\B}_k} = \proj_{V_k} +\, \bar{Q}_k e^{tM_k} Q_k
		\end{equation}
		where
		\begin{equation}
		\label{eq:bound_Qk}
			\frac{1}{2}\brac{ \norm{Q_k}{}^2 + \norm{\bar{Q}_k}{}^2}
			\leqslant \norm{\frac{ik\times}{\abs{k}}}{}^2 + \frac{1}{2} \brac{ \norm{J_{eq}^{1/2}}{}^2 + \norm{J_{eq}^{-1/2}}{}^2 } + \frac{1}{2} \brac{s + s\inv} \norm{R}{}^2
			\leqslant C_b
		\end{equation}
		for some $C_b > 0$ \emph{independent} of $k$.
		We may thus deduce from \eqref{eq:exp_B_sim_exp_M}, \eqref{eq:bound_Qk}, and \fref{Proposition}{prop:max_unstable_evals} that (i) holds.

		With (i) and (ii) in hand we apply \fref{Proposition}{prop:const_semigroup_Fourier} and obtain that $e^{t\B}$ is a semigroup on $L^2$ which is $\brac{\eta_*+\varepsilon}$-contractive
		on all $H^r$ spaces, for $r\geqslant 0$, with domain $H^2\brac{\T^3,\,\R^3\times\R^3} \times L^2\brac{\T^3,\,\R^3}$ and generator $\B$.

		\textbf{Step 2:}
		Now we construct the full semigroup $e^{t\Leb}$.
		First observe that, since $\sbrac{\bar{\Omega}_{eq},\,\cdot\,}$ is a finite-dimensional linear operator,
		${\brac{e^{t\sbrac{\bar{\Omega}_{eq},\,\cdot\,}}}}_{t\geqslant 0}$ is a semigroup on $\R^{2\times 2}$ and moreover
		\begin{equation}
		\label{eq:dom_gen_L_0}
			\text{the domain of }
			{\brac{ e^{t \sbrac{\bar{\Omega}_{eq},\,\cdot\,}} }}_{t\geqslant 0}
			\text{ is }
			\R^{2\times 2}
			\text{ and its generator is }
			\sbrac{\bar{\Omega}_{eq},\,\cdot\,}.
		\end{equation}
		Moreover, \fref{Lemma}{lemma:antisymmetry_commutator_on_space_symmetric_matrices} tells us that $\sbrac{\bar{\Omega}_{eq},\,\cdot\,}$ is antisymmetric,
		and thus it follows from \fref{Lemma}{lemma:bounds_mat_exp_using_sym_part}
		that ${\brac{ e^{t\sbrac{\bar{\Omega}_{eq},\,\cdot\,}} }}_{t\geqslant 0}$ is a contractive semigroup, i.e.
		\begin{equation}
		\label{eq:semigroup_L_0_contractive}
			\norm{e^{t\sbrac{\bar{\Omega}_{eq},\,\cdot\,}}}{\Leb\brac{l^2,l^2}} \leqslant 1.
		\end{equation}
		From \eqref{eq:semigroup_L_0_contractive} and step 1 it follows that
		$
			e^{t\Leb} = e^{t\calN} \oplus e^{t\sbrac{\bar{\Omega}_{eq},\,\cdot\,}} \oplus 1
		$
		is a direct sum of semigroups which are, for every $\varepsilon > 0$,
		$\brac{\eta_* + \varepsilon}$-contractive (since contractive semigroups are $\eta$-contractive for any $\eta >0$ and since $1 = e^0$ is the trivial semigroup, which is contractive),
		and is hence $\brac{\eta_* + \varepsilon}$-contractive itself.
		Moreover, it follows from the observation \eqref{eq:dom_gen_L_0} and step 1 that the domain and generator of $e^{t\Leb}$ are as claimed.
		Finally the $H^{p,q,r}$ estimates follow immediately from \eqref{eq:semigroup_L_0_contractive} and the $H^r$ estimates of step 1,
		upon observing that since, for each $t>0$, $e^{t\sbrac{\bar{\Omega}_{eq},\,\cdot\,}}$ is a linear operator \emph{independent of the spatial variable $x$},
		it commutes with partial derivatives and with the Fourier transform.

		\textbf{Step 3:}
		We now prove that incompressibility is propagated.
		Let us write $Y\brac{t,\cdot} \defeq \brac{ u, \omega, a }\brac{t,\cdot}$.
		The key observation is that, as a consequence of \fref{Lemma}{lemma:B_Q_act_on_V_k},
		$
			\pdt\brac{ \brac{k,0,0}\cdot \hat{Y}_k}
			= \brac{k,0,0}\cdot\hat{\B}_k \hat{Y}_k
			= 0
		$
		for every $k\in\Z^3$.
		In particular, if $\nabla\cdot_0 u = 0$ then indeed
		\begin{equation*}
			\brac{\nabla \cdot u}\brac{t,\cdot}
			= \sum_{k\in\Z^3} \brac{k,0,0} \cdot \hat{Y}_k \brac{t}
			= \sum_{k\in\Z^3} \brac{k,0,0} \cdot \hat{Y}_k \brac{0}
			= \nabla\cdot u_0
			= 0.
		\end{equation*}
	\end{proof}

\subsection{A maximally unstable solution}
\label{sec:max_unstable_sol}

	In this section we construct a maximally unstable solution of the linearized problem \eqref{eq:PDE_compact_not_lin_semigroup}.
	Recall that \eqref{eq:PDE_compact_not_lin_semigroup} is obtained from the linearized problem by Leray projection.
	In particular, since \eqref{eq:PDE_compact_not_lin_semigroup} is invariant under the transformation $u \mapsto u + C$ for any constant $C$,
	the component corresponding to $u$ in this maximally unstable solution will have average zero
	(this is as expected in light of the Galilean equivariance of the original system of equations, as discussed at the end of \fref{Section}{sec:micropolar}).
	Note that, just as \fref{Proposition}{prop:semigroup_lin_prob} is essentially a semigroup version of \fref{Proposition}{prop:unif_bound_mat_exp},
	\fref{Proposition}{prop:max_unstable_sol} below is essentially a semigroup version of \fref{Proposition}{prop:max_unstable_evals}.
	
	\begin{prop}[Maximally unstable solution]
	\label{prop:max_unstable_sol}
		Let $\eta_*$ be as in \fref{Proposition}{prop:max_unstable_evals}.
		There is a smooth function $Y : \cobrac{0,\infty} \times \T^3 \to \R^8$ such that $\pdt Y = \B Y$ and
		$
			\norm{Y\brac{t,\cdot}}{H^r\brac{\T^3, \R^8}} = e^{\eta_* t} \norm{Y\brac{0,\cdot}}{H^r\brac{\T^3, \R^8}}
		$
		for every $t\geqslant 0$ and every $r\geqslant 0$.
		Moreover, if we write $Y = \brac{u,\omega,a} \in \R^3 \times \R^3 \times \R^2$, then $\nabla\cdot u = 0$, and for every $t\geqslant 0$ and every $r\geqslant 0$
		\begin{align*}
			\norm{u\brac{t,\cdot}}{H^r\brac{\T^3,\, \R^3}} &= e^{\eta_* t} \norm{u\brac{0,\cdot}}{H^r\brac{\T^3,\, \R^3}},\\
			\norm{\omega\brac{t,\cdot}}{H^r\brac{\T^3,\, \R^3}} &= e^{\eta_* t} \norm{\omega\brac{0,\cdot}}{H^r\brac{\T^3,\, \R^3}},\text{ and }\\
			\norm{a\brac{t,\cdot}}{H^r\brac{\T^3,\, \R^3}} &= e^{\eta_* t} \norm{a\brac{0,\cdot}}{H^r\brac{\T^3,\, \R^3}}.
		\end{align*}
	\end{prop}

	\begin{proof}
		Let $k_*\in\Z^3$ and $w_*\in\C$ be as in \fref{Proposition}{prop:max_unstable_evals} and recall that $\eta_*\defeq\re w_*$.
		It follows from \fref{Lemma}{lemma:B_Q_act_on_V_k} and \fref{Lemma}{lemma:sim_mat_act_on_quot_spaces} that, for any $k\in\Z^3$, $\hat{\B}_k$ and $M_k$ are similar,
		so in particular $w_*$ is an eigenvalue of $\hat{\B}_k$ and thus there exists $v_*\in\C^8$ such that $\hat{\B}\brac{k_*} v_* = w_* v_*$.
		Now define, for every $t\geqslant 0$ and every $x\in\T^3$,
		$
			Y\brac{t,x}\defeq v_* e^{w_* t + i k_* \cdot x} + v_*^\dagger e^{w_*^\dagger t - i k_* \cdot x}
		$
		where, for any complex number $w$, we denote its complex conjugate by $w^\dagger$.
		For a complex matrix $A$ we will write, in this proof only, $A^\dagger$ to denote its entry-wise complex conjugate (and not its conjugate transpose).

		Observe that $Y^\dagger = Y$ and hence $Y$ is real-valued.
		Note that since $\hat{\B}_k = Q_k M_k \bar{Q}_k$ (which follows from \fref{Lemmas}{lemma:B_Q_act_on_V_k} and \ref{lemma:sim_mat_act_on_quot_spaces}),
		since $M_k$ has real entries and is even in $k$ (i.e. $M_{-k} = M_k$), and since ${Q\brac{k}}^\dagger = Q\brac{-k}$ and ${\bar{Q}\brac{k}}^\dagger = \bar{Q}\brac{-k}$,
		we obtain that ${\hat{B}\brac{k}}^\dagger = \hat{\B}\brac{-k}$ and hence $\brac{w_*^\dagger,\, v_*^\dagger}$ is an eigenvalue-eigenvector pair for $\hat{\B}\brac{-k_*}$.
		Therefore
		\begin{equation}
			\pdt Y
			= w_* v_* e^{w_* t + i k_* \cdot x} + w_*^\dagger v_*^\dagger e^{w_*^\dagger t - i k_* \cdot x}
			= \hat{\B}\brac{k_*} v_* e^{w_* t + i k_* \cdot x} + \hat{\B}\brac{-k_*} v_*^\dagger e^{w_*^\dagger t - i k_* \cdot x}
			= \B Y.
		\end{equation}

		Now we argue that $u \defeq \brac{Y_1,Y_2,Y_3}$ is divergence-free.
		Observe that if $k_* = 0$ then $Y$ is constant in the spatial variable $x\in\T^3$ and thus $u$ is constant and hence divergence-free.
		Now consider the case $k_*\neq 0$.
		Note that we have proved in \fref{Lemma}{lemma:B_Q_act_on_V_k} that, for all $k\in\Z^3$, $\im\hat{\B}_k \subseteq V_k^\perp$ and hence $\brac{k,0,0}\cdot v = 0$
		for any eigenvector $v$ of $\hat{\B}_k$.
		We may thus compute:
		\begin{equation}
			\nabla\cdot u
			= \sum_{k\in\Z^3} k\cdot \hat{u}\brac{k}
			= \brac{k_*,0,0} \cdot \hat{Y}\brac{k_*} + \brac{-k_*,0,0}\cdot\hat{Y}\brac{-k_*}
			= 0.
		\end{equation}

		Finally, observe that for any $j=1,\,\dots,\,8$, $Y_j \brac{t,x} = {(v_*)}_j\, e^{w_* t + i k_* \cdot x} + {(v_*^\dagger)}_j\, e^{w_*^\dagger t - i k_* \cdot x}$ and hence,
		proceeding as above yields
		\begin{equation*}
			\norm{Y_j\brac{t,\cdot}}{H^r}^2
			= \jap{k_*}{2r} \abs{ {(v_*)}_j }^2\, \abs{ e^{\re w_* t} }^2
			+ \jap{k_*}{2r} \abs{ {(v_*^\dagger)}_j }^2\, \abs{ e^{\re w_*^\dagger t} }^2
			= 2 \jap{k_*}{2r} \abs{v_*}\, e^{2 \eta_* t}
			= e^{2\eta_* t}\,\norm{Y_j\brac{0,\cdot}}{H^r}^2.
		\end{equation*}
		We can thus conclude that, for $u=\brac{Y_1, Y_2, Y_3}$, $\omega=\brac{Y_4, Y_5, Y_6}$, and $a=\brac{Y_7, Y_8}$,
		\begin{equation*}
			\norm{u\brac{t,\cdot}}{H^r}^2 = e^{2\eta_* t} \norm{u\brac{0,\cdot}}{H^r}^2,\text{ }
			\norm{\omega\brac{t,\cdot}}{H^r}^2 = e^{2\eta_* t} \norm{\omega\brac{0,\cdot}}{H^r}^2, \text{ and }
			\norm{a\brac{t,\cdot}}{H^r}^2 = e^{2\eta_* t} \norm{a\brac{0,\cdot}}{H^r}^2.
		\end{equation*}
	\end{proof}


\section{Nonlinear energy estimates}
\label{sec:nonlinear_estimates}

	In this section we perform the nonlinear energy estimates necessary to carry out the bootstrap instability argument in \fref{Section}{sec:bootstrap}.
	First we record the precise form of the nonlinearities and introduce, in \fref{Definitions}{def:m_and_its_domain} and \ref{def:small_energy_regime}, notation used in the remainder of the paper.
	In \fref{Section}{sec:est_nonlinearity} we obtain bounds on the nonlinearity in $L^2$.
	We record the energy-dissipation relations satisfied by solutions of \eqref{eq:stat_prob_lin_mom}--\eqref{eq:stat_prob_microinertia} and their derivatives in \fref{Section}{sec:ED_identities}.
	In \fref{Section}{sec:est_interactions} we estimate the interaction terms appearing in the relations obtained in the preceding section.
	Finally we use the results of \fref{Sections}{sec:ED_identities} and \ref{sec:est_interactions} in \fref{Section}{sec:chain_energy_ineq}
	to obtain a chain of energy inequalities from which we deduce the key bootstrap energy inequality.

	Writing the problem compactly using the same notation as that which was used in \eqref{eq:PDE_compact_not_lin} and defining $Z \defeq X-X_{eq}$ and $q\defeq p - p_{eq}$ we may write
	the original problem \eqref{eq:stat_prob_lin_mom}--\eqref{eq:stat_prob_microinertia} as
	\begin{equation}
	\label{eq:PDE_compact_not}
		\pdt DZ = \widetilde{\Leb} Z + \Lambda\brac{q} + N\brac{Z} \text{ subject to } \nabla\cdot u = 0.
	\end{equation}
	For simplicity we will abuse notation in this section and write the components of the perturbative unknown $Z$ as $Z = \brac{u ,\omega, J}$.
	This does conflict with the notation used in \fref{Section}{sec:ana_lin_prob} for $X$.
	However confusion may be avoided by noting that all the unknowns appearing in this section are \emph{perturbative}, i.e. $\brac{u, \omega, J}$ will always denote the components of $Z$.
	We also abuse notation and, in this section, write $p = q$.

	Using this notation we have that $N=\brac{N_1, N_2, N_3}$ for
	\begin{equation}
	\label{eq:record_nonlin_13}
		N_1\brac{Z} = -\brac{u\cdot\nabla}u,\, N_3\brac{Z} = \sbrac{\Omega,J} - \brac{u\cdot\nabla}J,
	\end{equation}
	and
	\begin{align}
		N_2\brac{Z} \hspace{-1.45cm}&&= -J_{eq} \brac{u\cdot\nabla}\omega - \brac{I+JJ_{eq}\inv}\inv \brac{
			\omega\times J\omega + \omega_{eq}\times J\omega + \omega\times J_{eq} \omega + \omega \times J \omega_{eq}
		}
		\nonumber
		\\
		&&\quad- JJ_{eq}\inv \brac{I + JJ_{eq}\inv}\inv \big(
			\kappa\nabla\times u - 2\kappa\omega + \brac{\tilde{\alpha} - \tilde{\gamma}} \nabla\brac{\nabla\cdot\omega} + \tilde{\gamma}\Delta\omega
		\nonumber
		\\
		&&\hspace{5cm}-\, \omega\times J_{eq}\omega_{eq} - \omega_{eq}\times J\omega_{eq} - \omega_{eq}\times J_{eq} \omega
		\big)
	\label{eq:record_nonlin_2}
	\end{align}

	Note that $Z$ being a solution of \eqref{eq:PDE_compact_not} is equivalent to $Z$ being a solution of
	\begin{equation}
	\label{eq:PDE_compact_not_semigroup}
		\pdt Z = \Leb Z + \Lambda\brac{p} + D\inv N\brac{Z} \text{ subject to } \nabla\cdot u = 0,
	\end{equation}
	for $\Leb$ as in \eqref{eq:PDE_compact_not_lin_semigroup}.
	The fact that both of these formulations are equivalent is very handy since \eqref{eq:PDE_compact_not} is particularly convenient for energy estimates
	whilst semigroup theory may be readily applied to \eqref{eq:PDE_compact_not_semigroup}.

	\begin{definition}
	\label{def:m_and_its_domain}
		Let $\mathfrak{B}\defeq\setdef{A\in\R^{n\times n}}{\norm{A}{\text{op}} <  1}$ and define	
		$m\brac{A} \defeq {\brac{I+A}}\inv$ for any $A\in\mathfrak{B}$.
		Note that $m$ is well-defined by \fref{Corollary}{cor:invertibility_pert_identity}.
	\end{definition}

	\begin{definition}[Small energy regime]
	\label{def:small_energy_regime}
		Since $n=3$ there exists $C_0 > 0$ such that
		$\norm{J}{\infty} \leqslant C_0 \norm{J}{H^4}$
		for every $J\in H^4 \brac{\T^3,\,\R^{3\times 3}}$. 
		We define $\delta_0 \defeq \min\brac{ \frac{1}{2},\frac{1}{2 C_0 \norm{J_{eq}\inv}{\infty}}}$.
	\end{definition}

\subsection{Estimating the nonlinearity}
\label{sec:est_nonlinearity}
	
	In this section we record some preliminary results in \fref{Lemmas}{lemma:small_energy_regime} and \ref{lemma:conseq_small_energy_regime}
	and then estimate the nonlinearity in $L^2$ in \fref{Proposition}{prop:est_nonlinearity}.

	First we record for convenience some elementary consequences of the Sobolev embeddings.
	In particular \fref{Lemma}{lemma:small_energy_regime} tells us that in the small energy regime $Z$, $\nabla Z$, and $\nabla^2 Z$ are $L^\infty$-multipliers,
	which simplifies many of the estimates below.
	It is precisely because the estimates are easier to perform when $\nabla^2 Z$ is in $L^\infty$ that we have chosen to close the estimates in $H^4$.

	\begin{lemma}
	\label{lemma:small_energy_regime}
		Let $Z\in H^4\brac{\T^3, \R^3 \times \R^{2\times 2} \times \R}$.
		\begin{enumerate}
			\item	There exists $C>0$ independent of $Z$ such that $\norm{Z}{L^\infty} + \norm{\nabla Z}{L^\infty} + \norm{\nabla^2 Z}{L^\infty} \leqslant C \norm{Z}{H^4}$.
			\item	For any polynomial $p$ with no zeroth-order term there exists $C\brac{p} > 0$ such that if $\norm{Z}{H^4} \leqslant 1$
				then $p\brac{\norm{Z}{H^4}}\leqslant C\brac{p} \norm{Z}{H^4}$.
		\end{enumerate}
	\end{lemma}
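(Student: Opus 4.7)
The plan for this lemma is to invoke two standard facts: the Sobolev embedding $H^s(\T^3)\hookrightarrow L^\infty(\T^3)$ for $s>3/2$ and the elementary bound on polynomials with no constant term evaluated on small positive numbers. Neither part presents any substantive obstacle; the lemma is a bookkeeping statement collecting two tools that will be used repeatedly in Section~\ref{sec:est_nonlinearity}.

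For part (1), since $n=3$, the Sobolev embedding gives $H^2(\T^3)\hookrightarrow L^\infty(\T^3)$ with some universal constant $C_{\text{Sob}}>0$. Applying this to $Z$, $\nabla Z$, and $\nabla^2 Z$ respectively yields
\begin{equation*}
  \norm{Z}{L^\infty} \leqslant C_{\text{Sob}} \norm{Z}{H^2}, \quad
  \norm{\nabla Z}{L^\infty} \leqslant C_{\text{Sob}} \norm{\nabla Z}{H^2}, \quad
  \norm{\nabla^2 Z}{L^\infty} \leqslant C_{\text{Sob}} \norm{\nabla^2 Z}{H^2}.
\end{equation*}
Since $\norm{Z}{H^2} \leqslant \norm{Z}{H^4}$, $\norm{\nabla Z}{H^2} \leqslant \norm{Z}{H^3} \leqslant \norm{Z}{H^4}$, and $\norm{\nabla^2 Z}{H^2} \leqslant \norm{Z}{H^4}$, summing gives the claim with $C = 3 C_{\text{Sob}}$.

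For part (2), write $p(x) = \sum_{k=1}^{N} a_k x^k$ (no zeroth-order term means the sum starts at $k=1$). Setting $t \defeq \norm{Z}{H^4}$ and noting that $0 \leqslant t \leqslant 1$ forces $t^k \leqslant t$ for every $k\geqslant 1$, we obtain
\begin{equation*}
  \vbrac{p(t)} \leqslant \sum_{k=1}^{N} \vbrac{a_k} t^k \leqslant t \sum_{k=1}^{N} \vbrac{a_k},
\end{equation*}
so the conclusion holds with $C(p) \defeq \sum_{k=1}^{N} \vbrac{a_k}$, which depends only on the coefficients of $p$. This completes the sketch; the only mild point to watch is that one must allow $p$ to have possibly negative coefficients, which is why the absolute value is taken inside the sum.
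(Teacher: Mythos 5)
Your argument is correct and matches the paper's approach exactly: part (1) is the Sobolev embedding $H^2(\T^3)\hookrightarrow L^\infty(\T^3)$ applied to $Z$, $\nabla Z$, $\nabla^2 Z$, and part (2) is the elementary observation that $t^k\leqslant t$ for $t\in[0,1]$ and $k\geqslant 1$. The paper states this in one line; your expansion is just a more explicit version of the same proof.
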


	\begin{proof}
		(1) follows from the Sobolev embedding $H^2\brac{\T^3}\hookrightarrow L^\infty\brac{\T^3}$ and (2) is immediate.
	\end{proof}

	The result below ensures, when combined with \fref{Corollary}{cor:invertibility_pert_identity},
	that the nonlinearities written in \eqref{eq:record_nonlin_13} and \eqref{eq:record_nonlin_2} are well-defined.
	Note that the only subtlety in ensuring that the nonlinearities are well-defined comes from the presence of ${\brac{ I + JJ_{eq\inv}}}\inv = m\brac{JJ_{eq}\inv}$.
	This terms owes its appearance to our choice to write \eqref{eq:stat_prob_ang_mom} in a form such that the left-hand side is $J_{eq}\pdt\omega$, and not $J\pdt\omega$.
	The former is more convenient since it makes it possible to use semigroup theory.

	\begin{lemma}
	\label{lemma:conseq_small_energy_regime}
	Let $\delta_0$ be as in the small energy regime (c.f. \fref{Definition}{def:small_energy_regime}). If $\norm{Z}{H^4}~\leqslant~\delta_0$ then $\norm{JJ_{eq}\inv}{\infty} \leqslant \frac{1}{2}$ and
	$\norm{m\brac{JJ_{eq}\inv}}{\infty} \leqslant 2$.
	\end{lemma}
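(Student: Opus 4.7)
The plan is to carry out two essentially routine estimates, chaining together (i) the Sobolev control of $\norm{\cdot}_\infty$ by $\norm{\cdot}_{H^4}$ from \fref{Lemma}{lemma:small_energy_regime} with the calibrated choice of $\delta_0$, and (ii) the Neumann series for $(I+A)^{-1}$ furnished by \fref{Corollary}{cor:invertibility_pert_identity}. No deep obstacle arises; the lemma is book-keeping to justify that the nonlinearities in \eqref{eq:record_nonlin_2} are well-defined pointwise in the small-energy regime.

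For the first bound, I would first remark that since $Z = (u,\omega,J)$, one has $\norm{J}{H^4} \leqslant \norm{Z}{H^4} \leqslant \delta_0$. Applying the Sobolev-type estimate from \fref{Lemma}{lemma:small_energy_regime}(1) gives $\norm{J}{L^\infty} \leqslant C_0 \norm{J}{H^4} \leqslant C_0 \delta_0$. Using the operator norm submultiplicativity pointwise and then the $L^\infty$ bound,
\begin{equation*}
    \norm{J J_{eq}^{-1}}{L^\infty} \leqslant \norm{J}{L^\infty} \norm{J_{eq}^{-1}}{L^\infty} \leqslant C_0 \, \delta_0 \, \norm{J_{eq}^{-1}}{L^\infty} \leqslant \tfrac{1}{2},
\end{equation*}
where the last inequality is precisely the content of the definition $\delta_0 \leqslant \brac{2 C_0 \norm{J_{eq}^{-1}}{L^\infty}}^{-1}$ from \fref{Definition}{def:small_energy_regime}.

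For the second bound, once the pointwise operator-norm bound $\norm{J J_{eq}^{-1}(x)}{\mathrm{op}} \leqslant \tfrac{1}{2} < 1$ is in hand, \fref{Corollary}{cor:invertibility_pert_identity} ensures that $m\brac{JJ_{eq}^{-1}}(x) = \brac{I + JJ_{eq}^{-1}(x)}^{-1}$ is well-defined pointwise and admits the convergent Neumann expansion $\sum_{n \geqslant 0} \brac{-JJ_{eq}^{-1}(x)}^n$. Taking operator norms and using the geometric series gives $\norm{m\brac{JJ_{eq}^{-1}}(x)}{\mathrm{op}} \leqslant \sum_{n \geqslant 0} 2^{-n} = 2$ for a.e. $x$, and passing to the essential supremum yields $\norm{m\brac{JJ_{eq}^{-1}}}{L^\infty} \leqslant 2$. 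The only conceptual care needed is to recognize that the abuse of notation in this section means the $J$ appearing here is the perturbation, so it is controlled directly by $\norm{Z}{H^4}$; once that is noted, both inequalities follow by the two short computations above.
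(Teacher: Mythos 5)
Your proof is correct and follows essentially the same route as the paper: chain the $L^\infty$--$H^4$ embedding constant $C_0$ through the definition of $\delta_0$ to get $\norm{JJ_{eq}\inv}{\infty}\leqslant\tfrac12$, then invoke \fref{Corollary}{cor:invertibility_pert_identity}. The only cosmetic difference is that you re-derive the bound $\leqslant 2$ via the Neumann series, while the paper simply cites the corollary's quantitative estimate $\norm{(I+B)^{-1}}{\mathrm{op}}\leqslant\frac{1}{1-\norm{B}{\mathrm{op}}}$ (which the paper proves by coercivity rather than a Neumann series); both are equivalent and reach the same conclusion.
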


	\begin{proof}
		If $\norm{Z}{H^4} \leqslant \delta_0$ then
		$\norm{JJ_{eq}\inv}{\infty} \leqslant \norm{J}{\infty} \norm{J_{eq}\inv}{\infty}
		\leqslant C_0 \norm{J}{H^4} \norm{J_{eq}\inv}{\infty}
		\leqslant C_0 \delta_0 \norm{J_{eq}\inv}{\infty}
		\leqslant \frac{1}{2}$
		and hence, by \fref{Corollary}{cor:invertibility_pert_identity}, $\norm{m\brac{JJ_{eq}\inv}}{\infty} \leqslant \frac{1}{1-\norm{JJ_{eq}\inv}{\infty}} \leqslant 2$.
	\end{proof}

	\noindent
	We now prove the main result of this section, namely the $L^2$ bound on the nonlinearity.

	\begin{prop}[Estimate of the nonlinearity]
	\label{prop:est_nonlinearity}
		Let $\delta_0$ be as in the small energy regime (c.f. \fref{Definition}{def:small_energy_regime}). There exists $C_N > 0$ such that if $\norm{Z}{H^4} \leqslant \delta_0$ then
		$\norm{N\brac{Z}}{L^2} \leqslant C_N \norm{Z}{H^2}^2$.
	\end{prop}

	\begin{proof}
		Recall that $N = \brac{N_1,N_2,N_3}$ is recorded in \eqref{eq:record_nonlin_13}--\eqref{eq:record_nonlin_2}.
		In particular, one immediately obtains that
		$
			\norm{N_1}{L^2} + \norm{N_3}{L^2} \lesssim \norm{Z}{L^2} \norm{Z}{H^1} \lesssim \norm{Z}{H^2}^2
		$.
		Dealing with $N_2$ is only slightly more delicate. 
		Considering $m\brac{JJ_{eq}\inv}$ as a fixed $L^\infty$ multiplier
		we see that all terms in $N_2$ are quadratic or cubic in $Z$
		(more precisely: the only cubic term is $-{\brac{I + JJ_{eq}\inv}}\inv\brac{\omega\times J\omega}$).
		We can thus use the generalized H\"{o}lder inequality as well as the Sobolev embeddings
		$H^1 \brac{\T^3} \hookrightarrow L^6 \brac{\T^3} \hookrightarrow L^p \brac{\T^3}$ for all $p\in\sbrac{1,6}$
		and $H^2 \brac{\T^3} \hookrightarrow L^\infty \brac{\T^3}$
		to obtain that
		$
			\norm{N_2}{L^2} \lesssim \norm{Z}{H^2}^2 + \norm{Z}{H^2}^3
			\lesssim \brac{1+\delta_0}\norm{Z}{H^2}^2
		$.
	\end{proof}

	\begin{remark}
	\label{rmk:Leray_proj_and_nonlinearity}
		The operator which must be estimated in the bootstrap instability argument is actually $\mathbb{P} N$ (and not merely $N$ as is done in \fref{Proposition}{prop:est_nonlinearity} above),
		where $\mathbb{P} = \mathbb{P}_L \oplus \id \oplus \id$
		for $\mathbb{P}_L$ denoting the Leray projector.
		However, since $\hat{\mathbb{P}}_L \brac{k}= \proj_{{\brac{\Span k}}^\perp} = I - \frac{k \otimes k}{\abs{k}^2}$ for every $k\in\Z^3$, i.e. since $\mathbb{P}_L$
		is a bounded Fourier multiplier, it follows that it is bounded on $L^2$.
	\end{remark}

\subsection{The energy-dissipation identities}
\label{sec:ED_identities}

	In this section we begin by recording the energy-dissipation relation and then remark on the coercivity of the dissipation.

	\begin{prop}[The energy-dissipation relation]
	\label{prop:ed_rel}
		If $Z$ solves \eqref{eq:PDE_compact_not} then for any multi-index $\alpha \in \N^3$
		\begin{equation*}
			\frac{1}{2} \Dt \norm{ \sqrt{D} \brac{\partial^\alpha Z}}{L^2}^2 + \ds\brac{ \partial^\alpha u, \partial^\alpha \omega}
			= B\brac{ \partial^\alpha \bar{\omega}, \partial^\alpha a} + \int_{\T^3} \partial^\alpha N\brac{Z} \cdot \partial^\alpha Z
		\end{equation*}
	\end{prop}
	where
	\begin{equation*}
		\ds\brac{u,\omega}\defeq
		\int_{\T^3}
			\frac{\mu}{2} \abs{\symgrad u}^2
			+ 2 \kappa{\vbrac{\half\nabla\times u - \omega}}^2
			+ \alpha\abs{\nabla\cdot\omega}^2
			+ \frac{\beta}{2} \abs{\symgrad^0 \omega}^2
			+ 2\gamma \abs{\nabla\times\omega}^2
	\end{equation*}
	and
	\begin{equation*}
		B\brac{\bar{\omega}, a}\defeq
		\brac{ 2\brac{\lambda-\nu} + {\brac{\frac{\tor}{2\kappa}}}^2 }
		\int_{\T^3} \bar{\omega}^\perp \cdot a.
	\end{equation*}

	\begin{proof}
		To compute the energy-dissipation relation we take a derivative $ \partial^\alpha $ of \eqref{eq:PDE_compact_not},
		multiply by $Z$, and integrate over the torus.
		Note that due to incompressibility $ \int_{\T^3} \partial^\alpha \Lambda\brac{p}\cdot \partial^\alpha Z = \int_{\T^3} -\brac{\nabla \partial^\alpha p}\cdot \partial^\alpha u = 0$.
		Now we compute $ \int_{\T^3} \widetilde{\Leb} Z \cdot Z$.
		Observe that for $T$ and $M$ as in \eqref{eq:constitutive_equations}, if we write $\tilde{T}$ for the trace-free part of $T$, i.e. $\tilde{T} = T+pI$, then we have that
		\begin{align}
			\int_{\T^3} \brac{ \brac{\mu + \kappa/2} \Delta u + \kappa\nabla\times\omega } \cdot u
			+ \int_{\T^3} \brac{
				\kappa\nabla\times u - 2\kappa\omega + \brac{\alpha + \beta/3 - \gamma } \nabla\brac{\nabla\cdot\omega} + \brac{\beta + \gamma} \Delta \omega
			}\cdot \omega
		\nonumber\\
			= \int_{\T^3} \brac{\nabla\cdot\tilde{T}}\cdot u + \brac{2\vc\tilde{T} + \nabla\cdot M}\cdot\omega
			= - \int_{\T^3} \tilde{T} : \brac{\nabla u - \Omega} + M:\nabla\omega
			= - \ds \brac{u,\omega}.
		\label{eq:deriv_dissipation_term}
		\end{align}
		Moreover, we may compute
		\begin{equation*}
			\omega_{eq} \times J \omega_{eq} = {\brac{\frac{\tor}{2\kappa}}}^2 \tilde{a}^\perp,\text{ }\\
			\omega_{eq} \times J_{eq} \omega = \frac{\lambda\tor}{2\kappa} \tilde{\omega}^\perp,\text{ and }\\
			\sbrac{\Omega,J_{eq}} = \brac{\lambda-\nu} \begin{pmatrix} 0 & 0 & -\omega_2 \\ 0 & 0 & \omega_1 \\ \omega_2 & -\omega_1 & 0 \end{pmatrix}
		\end{equation*}
		such that
		\begin{equation}
		\label{eq:deriv_unstable_term}
			\int_{\T^3} - \brac{
				\omega \times J_{eq} \omega_{eq}
				+ \omega_{eq} \times J \omega_{eq}
				+ \omega_{eq} \times J_{eq} \omega
			} \cdot \omega + \int_{\T^3} \brac{
				\sbrac{\Omega_{eq},J}
				+ \sbrac{\Omega, J_{eq}}
			} : J
			= B\brac{\bar{\omega},a}
		\end{equation}
		where we have used that $\sbrac{\Omega_{eq},J}:J = 0$ (c.f. \fref{Lemma}{lemma:antisymmetry_commutator_on_space_symmetric_matrices}).
		Combining \eqref{eq:deriv_dissipation_term} and \eqref{eq:deriv_unstable_term}, we obtain that $ \int_{\T^3} \widetilde{\Leb} Z \cdot Z = -\ds\brac{u,\omega} + B\brac{\bar{\omega},a}$,
		and hence we may conclude that
		\begin{align*}
			\frac{1}{2} \Dt \norm{\sqrt{D}\brac{ \partial^\alpha Z}}{L^2}^2
			= \int_{\T^3} \pdt\brac{D \partial^\alpha Z} \cdot \partial^\alpha Z
			= \int_{\T^3} \widetilde{\Leb} \,\partial^\alpha Z \cdot \partial^\alpha Z
			+ \int_{\T^3} \partial^\alpha N\brac{Z} \cdot \partial^\alpha Z
		\\
			= - \ds\brac{ \partial^\alpha u, \partial^\alpha \omega}
			+ B\brac{ \partial^\alpha \brac{\omega}, \partial^\alpha a}
			+ \int_{\T^3} \partial^\alpha N\brac{Z} \cdot \partial^\alpha Z.
		\end{align*}
	\end{proof}

	Besides the interaction term $ \int_{\T^3} \partial^\alpha N\brac{Z} \cdot \partial^\alpha Z$,
	the only term appearing in the energy-dissipation relation which does not have a sign is the term $B\brac{ \partial^\alpha\bar{\omega}, \partial^\alpha a}$.
	We refer to this term as the \emph{unstable term} since, as detailed in \fref{Section}{sec:heuristics_instability} the instability originates from $\bar{\omega}$ and $a$.
	In \fref{Lemma}{lemma:ibp_unstable_term} below we estimate this term
	in a manner which allows us to absorb a high-order contribution into the dissipation and leaves us with a lower-order term which is controlled by the energy.

	\begin{lemma}[Bounds on the unstable term]
	\label{lemma:ibp_unstable_term}
		For any $\sigma > 0$ there exists $C_\sigma > 0$ such that for any sufficiently regular $\brac{\omega, a}$ and any nonzero multi-index $\alpha$,
		\begin{equation*}
			\vbrac{B\brac{ \partial^\alpha \bar{\omega}, \partial^\alpha a}} \leqslant \sigma \norm{ \partial^{\alpha + 1} \bar{\omega}}{L^2}^2 + C_\sigma \norm{ \partial^{\alpha-1} a}{L^2}^2
		\end{equation*}
		where we write $\alpha\pm 1 \defeq \alpha\pm e_i$ for some $i$ such that $\alpha_i$ nonzero.
	\end{lemma}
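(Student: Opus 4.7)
The plan is to exploit the asymmetric distribution of derivatives on the two factors in $B$, obtained via a single integration by parts, followed by a weighted Cauchy--Schwarz estimate.

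Recall from \fref{Proposition}{prop:ed_rel} that $B(\bar\omega,a) = C_{\lambda,\nu,\tau,\kappa} \int_{\T^3} \bar\omega^\perp \cdot a$, where $C_{\lambda,\nu,\tau,\kappa} = 2(\lambda-\nu) + \left(\frac{\tau}{2\kappa}\right)^2$ is a finite constant depending only on the physical parameters. Since $\alpha \in \N^3$ is nonzero, by hypothesis there is some index $i \in \{1,2,3\}$ with $\alpha_i \geqslant 1$, and with the notation $\alpha \pm 1 = \alpha \pm e_i$ from the statement we have $\partial^\alpha = \partial_i \partial^{\alpha-1}$ as well as $\partial_i \partial^\alpha = \partial^{\alpha+1}$.

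The first step is to integrate by parts in the $i$-th coordinate, moving one derivative from $\partial^\alpha a$ onto $\partial^\alpha \bar\omega$: on the periodic torus there are no boundary contributions, so
\begin{equation*}
    B(\partial^\alpha \bar\omega, \partial^\alpha a)
    = C_{\lambda,\nu,\tau,\kappa} \int_{\T^3} (\partial^\alpha \bar\omega)^\perp \cdot \partial_i \partial^{\alpha-1} a
    = -C_{\lambda,\nu,\tau,\kappa} \int_{\T^3} (\partial^{\alpha+1} \bar\omega)^\perp \cdot \partial^{\alpha-1} a,
\end{equation*}
using that the map $v \mapsto v^\perp$ is linear and commutes with $\partial_i$. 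The second step is a Cauchy--Schwarz estimate followed by Young's inequality with a parameter. For any $\sigma > 0$, writing $\varepsilon := \sigma / C_{\lambda,\nu,\tau,\kappa}^2$ we obtain
\begin{equation*}
    \bigl| B(\partial^\alpha \bar\omega, \partial^\alpha a) \bigr|
    \leqslant C_{\lambda,\nu,\tau,\kappa} \,\norm{\partial^{\alpha+1}\bar\omega}{L^2} \norm{\partial^{\alpha-1}a}{L^2}
    \leqslant \sigma \norm{\partial^{\alpha+1}\bar\omega}{L^2}^2 + C_\sigma \norm{\partial^{\alpha-1}a}{L^2}^2
\end{equation*}
for $C_\sigma := C_{\lambda,\nu,\tau,\kappa}^2/(4\sigma)$, which is the desired inequality.

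There is no real obstacle here; the only subtlety is making sure that the integration by parts is legitimate (which holds on $\T^3$ under mild regularity on $\omega$ and $a$, and which in the intended application is justified by the regularity provided by the local existence theory), and that the index $i$ used to define $\alpha \pm 1$ is consistent between both the integration by parts and the statement of the lemma, both of which are ensured by the hypothesis $\alpha \neq 0$.
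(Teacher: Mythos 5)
Your proof is correct and follows essentially the same route as the paper's: a single integration by parts on $\T^3$ to redistribute one derivative from $\partial^\alpha a$ onto $\partial^\alpha\bar\omega$, followed by Cauchy--Schwarz and Young's inequality with parameter $\sigma$, yielding the same constant $C_\sigma = C_{\lambda,\nu,\tau,\kappa}^2/(4\sigma)$.
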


	\begin{proof}
		This follows immediately from integrating by parts and applying an $\varepsilon$-Cauchy inequality:
		if we define $C\defeq 2\brac{\lambda-\nu} + {\brac{\frac{\tor}{2\kappa}}}^2$ then, for any $\varepsilon > 0$,
		\begin{equation*}
			\vbrac{B\brac{ \partial^\alpha \bar{\omega}, \partial^\alpha a}}
			= C \vbrac{ \int_{\T^3} \partial^{\alpha+1} \bar{\omega}^\perp \cdot \partial^{\alpha-1}a}
			\leqslant \varepsilon \norm{ \partial^{\alpha+1} \bar{\omega}^\perp}{L^2}^2 + \frac{C^2}{4\varepsilon} \norm{ \partial^{\alpha-1} a}{L^2}^2.
		\end{equation*}
	\end{proof}

	\noindent
	We now prove that the dissipation is coercive, since the velocity $u$ has average zero.

	\begin{lemma}[Coercivity of the dissipation over linear velocities of average zero]
	\label{lemma:coer_diss_vel_avg_zero}
		There exists a constant $C_D>0$ such that for every $\brac{u,\omega}\in H^1\brac{\T^3,\,\R^3 \times \R^3}$, if $\fint u = 0$ then
		$
			\ds\brac{u,\omega} \geqslant C_D\brac{ \norm{u}{H^1}^2 + \norm{\omega}{H^1}^2}.
		$
	\end{lemma}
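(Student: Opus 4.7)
The plan is to split the dissipation into three pieces: one giving $H^1$ control of $u$, one giving $L^2$ control of $\omega$ via the coupling term, and one giving $H^1$ control of $\omega$ via the $M$-terms. The hypothesis $\fint u = 0$ is used only to apply Poincaré to $u$.

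First I would handle the velocity. A Fourier computation gives the identity $\int_{\T^3} \tfrac12 |\symgrad u|^2 = \int_{\T^3} |\nabla u|^2 + |\nabla\cdot u|^2$, which immediately yields the pointwise-in-frequency bounds $\int \tfrac{\mu}{2}|\symgrad u|^2 \geq \mu\int|\nabla u|^2 \geq \mu\int|\nabla\times u|^2$. Combined with the Poincaré inequality on $\T^3$ for mean-zero functions, this shows that a small fraction of $\int\tfrac{\mu}{2}|\symgrad u|^2$ already controls $\|u\|_{H^1}^2$ up to a universal constant.

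Next I would extract the $L^2$ bound on $\omega$ from the coupling term using the elementary inequality $|a-b|^2 \geq \tfrac{1}{1+\varepsilon}|b|^2 - \tfrac{1}{\varepsilon}|a|^2$, applied with $a=\tfrac12\nabla\times u$ and $b=\omega$. Choosing $\varepsilon$ small enough (specifically comparable to $\kappa/\mu$), this yields
\begin{equation*}
2\kappa \int_{\T^3}\!\bigl|\tfrac12\nabla\times u - \omega\bigr|^2 \geq \tfrac{2\kappa}{1+\varepsilon}\int_{\T^3}|\omega|^2 - \tfrac{\kappa}{2\varepsilon}\int_{\T^3}|\nabla\times u|^2,
\end{equation*}
and the negative curl term is absorbed into a second fraction of $\int\tfrac{\mu}{2}|\symgrad u|^2$ (we have enough room because we only used a fraction in the previous step). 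The outcome is a clean lower bound by $c\|u\|_{H^1}^2 + c\|\omega\|_{L^2}^2$ for some $c>0$.

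Finally I would control $\|\nabla\omega\|_{L^2}^2$ using the three remaining dissipation terms involving $\omega$. This is precisely the computation already exhibited in the paper just after \eqref{eq:positivity_requirement_on_viscosity_constants}: in Fourier,
\begin{equation*}
\int_{\T^3}\!\Bigl(\alpha|\nabla\cdot\omega|^2 + \tfrac{\beta}{2}|\symgrad^0\omega|^2 + 2\gamma|\nabla\times\omega|^2\Bigr)
= \sum_{k\in\Z^3}|k|^2 \bigl( \tilde\alpha|\proj_k \hat\omega|^2 + \tilde\gamma|\proj_{k^\perp}\hat\omega|^2\bigr),
\end{equation*}
with $\tilde\alpha = \alpha+4\beta/3$ and $\tilde\gamma=\beta+\gamma$ both strictly positive by hypothesis \eqref{eq:positivity_requirement_on_viscosity_constants}. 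Bounding below by $\min(\tilde\alpha,\tilde\gamma)\sum_k|k|^2|\hat\omega(k)|^2 = \min(\tilde\alpha,\tilde\gamma)\|\nabla\omega\|_{L^2}^2$ completes the estimate, and summing the three pieces gives $\ds(u,\omega) \geq C_D(\|u\|_{H^1}^2 + \|\omega\|_{H^1}^2)$. There is no genuine obstacle here: the only subtlety is the bookkeeping when splitting $\int\tfrac{\mu}{2}|\symgrad u|^2$ into a coercive piece for $u$ and an absorbing piece for the coupling term, which is resolved by the freedom to pick $\varepsilon$.
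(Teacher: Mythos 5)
Your proof is correct and follows the same skeleton as the paper's: a fraction of the $\symgrad u$ term together with the zero-mean hypothesis gives $H^1$ control of $u$, the $\kappa$-coupling term gives $L^2$ control of $\omega$ after absorbing the curl into the remaining fraction, and the couple-stress terms give control of $\nabla\omega$. The difference is in the tools: the paper invokes the Korn, Korn--Poincar\'{e}, and div-curl lemmas from its appendix, whereas you replace all three by direct Fourier computations on the torus. The substitution is legitimate and, in step (c), a touch more explicit than the paper's phrasing: the chain $\|\nabla\omega\|_{L^2}^2 = \int |\nabla\cdot\omega|^2 + \int|\nabla\times\omega|^2 \lesssim \mathcal{D}$ is immediate only when $\alpha$ and $\gamma$ are both strictly positive, yet \eqref{eq:positivity_requirement_on_viscosity_constants} allows $\alpha = \gamma = 0$ with $\beta > 0$; your Fourier bound $\geq \min(\tilde\alpha,\tilde\gamma)\|\nabla\omega\|_{L^2}^2$ is precisely the computation the paper records after \eqref{eq:positivity_requirement_on_viscosity_constants} and is the right justification across all admissible viscosity ranges. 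One slip of the pen in step (b): your inequality $|a-b|^2 \geq \tfrac{1}{1+\varepsilon}|b|^2 - \tfrac{1}{\varepsilon}|a|^2$ produces a curl term of size $\kappa/(2\varepsilon)$, which can be absorbed into a fraction of the $\mu$-dissipation only if $\varepsilon$ is taken \emph{large} enough (of order at least $\kappa/\mu$), not small; the order of magnitude you give is correct but the stated monotonicity is reversed. Taking, say, $\varepsilon = \kappa/\mu$ and allotting half the $\symgrad u$ budget to the absorption closes the argument without difficulty.
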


	\begin{proof}
		Since $u$ has average zero, it follows from \fref{Propositions}{prop:Korn} and \ref{prop:Korn_Poincare} that
		\begin{equation}
			\label{eq:coerc_u}
			\norm{u}{H^1}^2 \lesssim \norm{\symgrad u}{L^2}^2 \lesssim D\brac{u,\omega}.
		\end{equation}
		To see that the dissipation also controls the $H^1$ norm of $\omega$ we observe that, by \eqref{eq:coerc_u},
		\begin{equation*}
			\norm{\omega}{L^2}^2
			\lesssim \int_{\T^3} {\vbrac{ \frac{1}{2} \nabla\times u - \omega }}^2 + \int_{\T^3} {\vbrac{ \frac{1}{2} \nabla\times u}}^2
			\lesssim D\brac{u,\omega} + \norm{u}{H^1}^2
			\lesssim D\brac{u,\omega}
		\end{equation*}
		whilst, by \fref{Lemma}{lemma:div_curl_identity}, $\norm{\nabla\omega}{L^2}^2 = \int_{\T^3} \abs{\nabla\cdot\omega}^2 + \int_{\T^3} \abs{\nabla\times\omega}^2 \lesssim D\brac{u,\omega}$,
		such that indeed $\norm{\omega}{H^1}^2~\lesssim~D\brac{u,\omega}$.
	\end{proof}

	Recall that, as detailed in \fref{Section}{sec:micropolar}, due to the Galilean equivariance of \eqref{eq:stat_prob_lin_mom}--\eqref{eq:stat_prob_microinertia} solutions of that system
	can be assumed without loss of generality to have an Eulerian velocity with average zero.
	Since $u_{eq} = 0$ it follows that we can assume that the perturbative velocity $u$ has average zero as well,
	and hence the coercivity result proven in \fref{Lemma}{lemma:coer_diss_vel_avg_zero} applies.

\subsection{Estimating the interactions}
\label{sec:est_interactions}

	In this section we introduce notation which makes it easier to write down the Fa\`{a} di Bruno formula for the chain rule,
	use this notation to record useful bounds on $m$ (defined in \fref{Definition}{def:m_and_its_domain}),
	and finally we estimate the interactions arising from the energy-dissipation relations satisfied by derivatives of solutions to \eqref{eq:stat_prob_lin_mom}--\eqref{eq:stat_prob_microinertia}
	in \fref{Proposition}{prop:est_interactions}.

	\begin{definition}[Integer partitions and derivatives]
	\label{def:int_part_deriv}
		Let $k\in\N$.
		\begin{itemize}
			\item	Let $i_1 \geqslant i_2 \geqslant \dots \geqslant i_l \geqslant 1$ be integers such that $k = i_1 + i_2 + \dots + i_l$.
				The sequence $\brac{ i_1, i_2, \dots, i_l}$ is called an \emph{integer partition} of $k$ and $l$ is referred to as the \emph{size} of that partition.
			\item	For $1\leqslant i \leqslant k$ we denote by $P_i \brac{k}$ the set of integer partitions of $k$ of size $i$,
				and by $P\brac{k}$ the set of integer partitions of $k$. In particular note that $P\brac{k} = \coprod_{i=1}^k P_i \brac{k}$.
			\item	Let $f:\R^n\to\R^m$ be $k$-times differentiable. For any $\pi = \brac{i_1,\,\dots,\,i_l} \in P\brac{k}$ (where possibly $i_p = i_q$ for $p\neq q$) we define
				$
					\nabla^\pi f \defeq \sym \brac{ \nabla^{i_1} f \otimes \dots \otimes \nabla^{i_l} f}
				$
				where for any tensor $T$ of rank $r$,
				$
					{\brac{ \sym T }}_{j_1 \,\dots\, j_r} \defeq \frac{1}{r!} \sum_{\sigma\in S_r} T_{j_{\sigma\brac{1}} \,\dots\, j_{\sigma\brac{r}}}.
				$
		\end{itemize}
	\end{definition}
	\begin{example} Examples of integer partitions and derivatives indexed by integer partitions are
		\begin{itemize}
			\item	$P_2 (4) = \cbrac{ \brac{3,1}, \brac{2,2} }$,
			\item	$P (4) = \cbrac{ \brac{4}, \brac{3,1}, \brac{2,2}, \brac{2,1,1}, \brac{1,1,1,1} }$, and
			\item	$\nabla^{\brac{2,1,1}} f = \sym \brac{\nabla^2 f \otimes \nabla f \otimes \nabla f}
					= \sym \brac{\nabla f \otimes \nabla^2 f \otimes \nabla f}
					= \sym \brac{ \nabla f \otimes \nabla f \otimes \nabla^2 f}
				$.
		\end{itemize}
	\end{example}
	\begin{remark}
		Derivatives indexed by integer partitions, denoted by $\nabla^\pi f$, are a convenient shorthand for terms appearing in the Fa\`{a} di Bruno formula for derivatives of compositions.
		Their key property which we will use in estimates is that, for any integer partition $\pi = \brac{i_1,\,\dots,\,i_l}$,
		$\vbrac{\nabla^\pi f} \leqslant \prod_{j=1}^l \vbrac{\nabla^{i_j} f}$. For example $\vbrac{\nabla^{\brac{2,1,1}} f} \leqslant \vbrac{\nabla^2 f} {\vbrac{\nabla f}}^2$.
	\end{remark}

	Having introduced notation for derivatives indexed by integer partitions we now use it to obtain bounds on derivatives of $m$ in \fref{Lemma}{lemma:bounds_on_nabla_k_m} below.

	\begin{lemma}[Bounds on derivatives of $m$]
	\label{lemma:bounds_on_nabla_k_m}
		The function $m$ from \fref{Definition}{def:m_and_its_domain} is smooth and moreover for every $k\in\N$ there exists $C_k > 0$ such that, for every $A\in \mathfrak{B}$,
		$
			\vbrac{\nabla^k m \brac{A}} \leqslant C_k \abs{m \brac{A}}^{k+1}
		$.
	\end{lemma}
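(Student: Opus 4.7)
The plan is to derive a closed-form expression for the Fréchet derivatives of $m$ by repeatedly differentiating the defining identity $(I+A)\,m(A) = I$, and then to read off the bound from submultiplicativity of the operator norm.

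Smoothness of $m$ on $\mathfrak{B}$ is immediate: by \fref{Corollary}{cor:invertibility_pert_identity}, $I+A$ is invertible whenever $\|A\|_{\mathrm{op}} < 1$, so $m$ is the restriction to $\mathfrak{B}$ of the composition of the affine map $A\mapsto I+A$ with matrix inversion, both of which are smooth on their natural domains. Equivalently, the Neumann series $m(A) = \sum_{n\geqslant 0}(-A)^n$ converges absolutely on $\mathfrak{B}$ and yields smoothness by term-by-term differentiation.

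For the quantitative bound, the base case $k=1$ comes from differentiating $(I+A)\,m(A) = I$ in the direction $H$, which yields $H\,m(A) + (I+A)\,Dm(A)[H] = 0$, hence $Dm(A)[H] = -m(A)\,H\,m(A)$, and so $|Dm(A)| \leqslant |m(A)|^2$. For general $k\geqslant 1$ I would prove by induction on $k$ the closed form
\[
D^k m(A)[H_1,\dots,H_k] = (-1)^k\!\!\sum_{\sigma\in S_k}\! m(A)\,H_{\sigma(1)}\,m(A)\,H_{\sigma(2)}\,m(A)\cdots m(A)\,H_{\sigma(k)}\,m(A),
\]
the inductive step being a direct product-rule computation that applies the $k=1$ identity to each of the $k$ factors of $m(A)$ appearing in the expression for $D^{k-1}m$, reindexing the resulting sum over $S_k$. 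Submultiplicativity of the operator norm applied term-by-term then gives $|D^k m(A)[H_1,\dots,H_k]| \leqslant k!\,|m(A)|^{k+1}\,|H_1|\cdots|H_k|$, which is precisely the desired bound with $C_k = k!$.

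There is no serious obstacle in this argument: the statement amounts to a standard exercise in the differential calculus of the matrix inversion map. The only bookkeeping point is to check that the right-hand side of the closed form is symmetric in $(H_1,\dots,H_k)$, which is manifest from the sum over $S_k$, so it matches the symmetry of the full $k$-th derivative and no further symmetrization is required when translating into the $\nabla^\pi$ notation of \fref{Definition}{def:int_part_deriv}.
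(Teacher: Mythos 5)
Your proposal is correct and follows essentially the same route as the paper: establish the first-order identity $Dm(A)[H] = -m(A)Hm(A)$ (the paper does it via a curve $t\mapsto m(A(t))$, you do it by differentiating $(I+A)m(A)=I$ — same content), then iterate to control higher derivatives. The paper's proof is terse and only writes out the first-order formula, asserting without elaboration that it suffices; your proof fills in exactly the missing inductive step by deriving the closed form $D^k m(A)[H_1,\dots,H_k]=(-1)^k\sum_{\sigma\in S_k} m(A)H_{\sigma(1)}m(A)\cdots H_{\sigma(k)}m(A)$ and reading off $C_k=k!$, which is a clean way to make the paper's claim rigorous.
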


	\begin{proof}
		First we observe that it suffices to show that, for $\partial_{ij} m \defeq \frac{\partial m\brac{A}}{\partial A_{ij}}$,
		\begin{equation}
		\label{eq:deriv_m_indices}
			\partial_{ij} m_{kl} = - m_{ki} m_{jl},
		\end{equation}
			To prove that \eqref{eq:deriv_m_indices} holds, note that for any smooth $A : \brac{-1,1} \to \mathfrak{B}$ (where $\mathfrak{B}$ is as in \fref{Definition}{def:m_and_its_domain}),
			$\Dt m\brac{A\brac{t}} = - m\brac{A\brac{t}} \brac{\Dt A\brac{t}} m\brac{A\brac{t}}$.
		Since we can pick $A$ such that $A\brac{0}$ and $\Dt A\brac{0}$ are \emph{arbitrarily} specified, it follows that for any $A_0\in\mathfrak{B}$ and any $V\in\R^{n \times n}$,
		$
			\nabla m \brac{A_0} V = - m \brac{A_0} V m \brac{A}
		$,
		i.e. indeed
		$
			\partial_{ij} m_{kl}
			= \nabla m_{kl} \brac{e_i \otimes e_j}
			= - {\brac{ m \brac{e_i \otimes e_j} m }}_{kl}
			= - m_{ki} m_{jl}
		$.
	\end{proof}

	\noindent
	We now use the bounds on $m$ we have just obtained to derive bounds on post-compositions with $m$.

	\begin{lemma}[Bounds on derivatives of post-compositions with $m$]
	\label{lemma:bounds_on_m_post_composed}
		Let $0 < \delta  < 1$ and
		consider $m$ from \fref{Definition}{def:m_and_its_domain}, which is smooth by \fref{Lemma}{lemma:bounds_on_nabla_k_m}.
		For every $k\in\N$ there exists $C_{k,\delta} > 0$ such that for every
		smooth $A: \T^n \to \R^{n \times n}$, if $\norm{A}{\infty} < \delta$ then, for every $x\in\T^n$,
		$
			\vbrac{\nabla^k \brac{m \brac{A}} \brac{x} } \leqslant C_{k,\delta} \sum_{\pi\in P\brac{k}}
			\vbrac{\nabla^\pi A\brac{x}}
		$,
		where $P\brac{k}$ and $\nabla^\pi$ are defined in \fref{Notation}{def:int_part_deriv}.
	\end{lemma}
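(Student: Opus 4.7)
The plan is to apply the Faà di Bruno formula for the chain rule to the composition $m \circ A$, and then control each resulting term using the derivative bounds on $m$ established in the preceding lemma together with the hypothesis $\|A\|_\infty < \delta < 1$.

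First I would invoke the multivariate Faà di Bruno formula to express
\begin{equation*}
    \nabla^k (m \circ A)(x) = \sum_{\pi = (i_1, \ldots, i_l) \in P(k)} c_\pi\, \nabla^l m(A(x)) \cdot \bigl(\nabla^{i_1} A(x), \ldots, \nabla^{i_l} A(x)\bigr),
\end{equation*}
where the $c_\pi$ are universal combinatorial constants depending only on the partition $\pi$ (in particular independent of $A$). Since $\nabla^l m(A(x))$ is a symmetric $l$-multilinear form, contracting it against the tuple $(\nabla^{i_1} A, \ldots, \nabla^{i_l} A)$ may be replaced by contracting it against the symmetrization, which is precisely $\nabla^\pi A$ up to a factor depending only on $\pi$.

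Next I would bound each factor appearing on the right-hand side. For the derivative of $m$ evaluated at $A(x)$, the previous lemma gives $|\nabla^l m(A(x))| \leq C_l |m(A(x))|^{l+1}$. Since $\|A\|_\infty < \delta < 1$, the corollary on invertibility of perturbations of the identity (referenced in the proof of \fref{Lemma}{lemma:conseq_small_energy_regime}) yields $|m(A(x))| \leq (1-\delta)^{-1}$ pointwise, and therefore
\begin{equation*}
    |\nabla^l m(A(x))| \leq C_l (1-\delta)^{-(l+1)}.
\end{equation*}
For the partition-indexed derivative, the elementary estimate $|\nabla^\pi A(x)| \leq \prod_{j=1}^l |\nabla^{i_j} A(x)|$ (recorded in the remark following \fref{Definition}{def:int_part_deriv}) controls the contraction.

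Combining these ingredients and setting
\begin{equation*}
    C_{k,\delta} \defeq \max_{\pi \in P(k)} \Bigl( c_\pi\, C_{|\pi|}\, (1-\delta)^{-(|\pi|+1)} \Bigr),
\end{equation*}
which is finite since $P(k)$ is finite and $\delta < 1$, yields the claimed pointwise bound. There is no serious obstacle here: the only point requiring care is verifying that symmetrizing the multilinear contraction against the tuple $(\nabla^{i_1} A, \ldots, \nabla^{i_l} A)$ really does produce a scalar multiple of $\nabla^{l} m(A) \cdot \nabla^\pi A$ (which it does, because $\nabla^l m(A)$ is already symmetric), so that the partition-indexed shorthand of \fref{Definition}{def:int_part_deriv} can be used directly in the estimate.
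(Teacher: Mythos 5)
Your proof is correct and takes essentially the same route as the paper: bound $\|m(A)\|_\infty$ via the corollary on perturbations of the identity, bound $|\nabla^l m(A(x))|$ via the preceding lemma, and combine through a Faà di Bruno estimate. The only difference is one of packaging — the paper outsources the Faà di Bruno step to its Proposition on estimates from the Faà di Bruno formula (which asserts $\vbrac{\nabla^k(F\circ g)} \leqslant C \sum_i |\nabla^i F(g)| \sum_{\pi\in P_i(k)} |\nabla^\pi g|$), while you unpack that estimate directly from the formula, including the observation that symmetry of $\nabla^l m$ lets you replace the sum over set partitions of a given type by a contraction against the symmetrized tensor $\nabla^\pi A$. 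That observation is exactly the content buried in the cited proposition, so you are in effect re-proving it inline; this is fine and arguably more self-contained, but it is not a distinct method.
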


	\begin{proof}
		Note that since $\norm{A}{\infty} < \delta < 1$ it follows from \fref{Corollary}{cor:invertibility_pert_identity} that $\norm{m\brac{A}}{\infty} < \frac{1}{1-\delta}$.
		Therefore, by \fref{Proposition}{prop:est_faa_di_bruno} and \fref{Lemma}{lemma:bounds_on_nabla_k_m},
		\begin{align*}
			\abs{\nabla^k \brac{m\brac{A}} \brac{x}}
			\leqslant C \sum_{i=1}^k \abs{\nabla^i m\brac{A\brac{x}}} \sum_{\pi\in P_i\brac{k}} \abs{\nabla^\pi A\brac{x}}
			\leqslant C \sum_{i=1}^k \abs{m\brac{A\brac{x}}}^{i+1} \sum_{\pi\in P_i\brac{k}} \abs{\nabla^\pi A\brac{x}}
			\\
			\leqslant C_{k,\delta} \sum_{\pi\in P\brac{k}} \abs{\nabla^\pi A\brac{x}}.
		\end{align*}
	\end{proof}

	\noindent
	Below we specialize \fref{Lemma}{lemma:bounds_on_m_post_composed} to the only case which matters for us, namely the case of $m\brac{JJ_{eq}\inv}$.

	\begin{cor}
	\label{cor:bounds_on_m_J_Jeq_inv}
		Let $\delta_0$ be as in the small energy regime (c.f. \fref{Definition}{def:small_energy_regime}).
		For every $k\in\N$ there exists $C_k > 0$ such that if $\norm{Z}{H^4}\leqslant \delta_0$ then
		$
			\vbrac{\nabla^k \brac{m \brac{JJ_{eq}\inv}} \brac{x} } \leqslant C_k \sum_{\pi\in P\brac{k}}
			\vbrac{\nabla^\pi J\brac{x}}
		$,
		for almost every $x\in\T^3$.
	\end{cor}

	\begin{proof}
		This follow immediately from combining \fref{Lemmas}{lemma:conseq_small_energy_regime} and \ref{lemma:bounds_on_m_post_composed}.
	\end{proof}

	\noindent
	Having obtained good estimates on terms involving $m$ which appear in the nonlinearity we are ready to estimate the interaction terms.

	\begin{prop}[Estimates of the interactions]
	\label{prop:est_interactions}
		Let $\delta_0$ be as in the small energy regime (c.f. \fref{Definition}{def:small_energy_regime}).
		For every $k = 0,1,2,3,4$ there exists $C_{I,\,k} > 0$ such that if $\norm{Z}{H^4} \leqslant \delta_0$ then
		\begin{equation*}
			\vbrac{\int_{\T^3} N\brac{Z}\cdot Z} \leqslant C_{I,\,0} \norm{Z}{H^4} \norm{Z}{L^2}^2
		\end{equation*}
		and
		\begin{equation*}
			\sum_{\abs{\alpha} = k} \vbrac{ \int_{\T^3} \partial^\alpha N\brac{Z} \cdot \partial^\alpha Z}
			\leqslant C_{I,\,k} \norm{Z}{H^4} \brac{
				\sum_{i=1}^k \norm{\nabla^i Z}{L^2}^2
				+ \norm{\nabla^{k+1}\brac{u,\omega}}{L^2}^2
			}.
		\end{equation*}
	\end{prop}

	\begin{proof}
		The nonlinearities are all of one of three types, and so we write $N = N_\RN{1} + N_\RN{2} + N_\RN{3}$ for
		\begin{align*}
			N_\RN{1} &\defeq -\brac{ \brac{u\cdot\nabla}u,\, J_{eq} \brac{u\cdot\nabla}\omega,\, \brac{u\cdot\nabla}J},
			\\
			N_\RN{2} &\defeq \Big(
				0,\, JJ_{eq}\inv m\brac{JJ_{eq}\inv} \brac{
					\omega \times J_{eq} \omega_{eq}
					+ \omega_{eq} \times J \omega_{eq}
					+ \omega_{eq} \times J_{eq} \omega
					+ 2 \kappa \omega
				}
			\\
			&\hspace{1.14cm}
				- m\brac{JJ_{eq}\inv} \brac{
					\omega \times J \omega
					+ \omega \times J \omega_{eq}
					+ \omega \times J_{eq} \omega
					+ \omega_{eq} \times J \omega
				}, \sbrac{\Omega,J}
			\Big), \text{ and }
			\\
			N_\RN{3} &\defeq \brac{
				0,\, -JJ_{eq}\inv m\brac{JJ_{eq}\inv} \brac{
					\kappa\nabla\times u
					+ \tilde{\alpha} \nabla\brac{\nabla\cdot\omega}
					+ \tilde{\gamma}\Delta\omega
				},\,0
			}.
		\end{align*}
		We first consider the case of $\alpha$ nonzero and so
		for $T\in\cbrac{\RN{1},\RN{2},\RN{3}}$ and $i=1,2,3,4$ we write $\calN_{T,i} \defeq \sum_{\abs{\alpha}=i} \int_{\T^3} \partial^\alpha N_T \brac{Z} \cdot \partial^\alpha Z$.

		Estimating nonlinearities of type $\RN{1}$ is fairly straightforward. We expand out $ \int_{\T^3} \partial^\alpha N_\RN{1} \brac{Z} \cdot \partial^\alpha Z$
		and use the generalized H\"{o}lder inequality, putting two factors in $L^2$ and putting the remaining factors in $L^\infty$ (thanks to \fref{Lemma}{lemma:small_energy_regime}).
		For example, writing for simplicity $N_\RN{1} \brac{Z} = \brac{u\cdot\nabla}Z$ and considering the case where $ \partial^\alpha = \partial_{ijkl}$,
		one of the terms that appears is $\int_{\T^3} \brac{ \partial_{ijk} u \cdot \nabla} \nabla_l Z \cdot \partial_{ijkl} Z$,
		and it can be estimated in the following way, which is typical of how nonlinear interactions of type $\RN{1}$ are handled:
		\begin{equation*}
			\vbrac{ \int_{\T^3}
				\brac{ \partial_{ijk} u \cdot \nabla} \nabla_l Z \cdot \partial_{ijkl} Z
			}
			\leqslant \norm{\nabla^3 u}{L^2} \norm{\nabla^2 Z}{\infty} \norm{\nabla^4 Z}{L^2}
			\lesssim \norm{Z}{H^4} \brac{ \norm{\nabla^3 Z}{L^2}^2 + \norm{\nabla^4 Z}{L^2}^2 }.
		\end{equation*}
		The only subtlety for these nonlinear terms is the fact that when $ \partial^\alpha $ hits $\nabla Z$ in $\brac{u\cdot\nabla}Z$,
		the interaction vanishes due to the incompressibility constraint. Indeed, for any multi-index $\alpha$,
		\begin{equation*}
			\int_{\T^3} \brac{u\cdot\nabla} \partial^\alpha Z \cdot \partial^\alpha Z
			= - \frac{1}{2} \int_{\T^3} \brac{\nabla\cdot u} \abs{ \partial^\alpha Z }^2
			= 0.
		\end{equation*}
		This cancellation is essential since we have no dissipative control of $J$ and hence we would not be able to control interactions involving $\nabla \partial^\alpha J$
		(which is a component of $ \nabla \partial^\alpha Z$).
		Estimating all the nonlinearities of type $\RN{1}$ in this manner we obtain:
		\begin{align*}
			&\abs{ \calN_{\RN{1},1} } \lesssim \norm{Z}{H^4} \norm{\nabla Z}{L^2}^2,
			&&\abs{ \calN_{\RN{1},3} } \lesssim \norm{Z}{H^4} \brac{
				\norm{\nabla^3 Z}{L^2}^2
				+ \norm{\nabla^2 Z}{L^2}^2
			},\text{ and }
			\\
			&\abs{ \calN_{\RN{1},2} } \lesssim \norm{Z}{H^4} \norm{\nabla^2 Z}{L^2}^2,
			&&\abs{ \calN_{\RN{1},4} } \lesssim \norm{Z}{H^4} \brac{
				\norm{\nabla^4 Z}{L^2}^2
				+ \norm{\nabla^3 Z}{L^2}^2
			}.
		\end{align*}

		To estimate nonlinearities of type $\RN{2}$ we proceed similarly, namely applying the generalized H\"{o}lder inequality with two factors in $L^2$ and the rest in $L^\infty$.
		In particular we use \fref{Lemma}{lemma:conseq_small_energy_regime} and \fref{Corollary}{cor:bounds_on_m_J_Jeq_inv} to control $m\brac{JJ_{eq}\inv}$ and its derivatives,
		as well as the second part of \fref{Lemma}{lemma:small_energy_regime} for the terms appearing when applying \fref{Corollary}{cor:bounds_on_m_J_Jeq_inv} which are cubic or higher-order.
		As an illustrative example let us write the nonlinearities of type $\RN{2}$ as $N_\RN{2} \brac{Z} = m\brac{J} b\brac{Z,Z}$ for some bilinear form $b$ and consider
		$ \int_{\T^3} \partial_{ijk} \brac{m\brac{JJ_{eq}\inv}} b\brac{ \partial_l Z, Z} \cdot \partial_{ijkl} Z$.
		This terms appears when $ \partial^\alpha = \partial_{ijkl}$ and can be estimated as follows:
		\begin{align*}
			\vbrac{\int_{\T^3} \partial_{ijk} \brac{m\brac{JJ_{eq}\inv}} b\brac{ \partial_l Z, Z} \cdot \partial_{ijkl} Z}
			&\lesssim \int_{\T^3} \brac{\abs{\nabla^3 J} + \abs{\nabla^2 J} \abs{\nabla J} + \abs{\nabla J}^3} \abs{\nabla Z} \abs{Z} \abs{\nabla^4 Z}
			\\
			&\hspace{-3.8cm}\lesssim \brac{
				\norm{\nabla^3 Z}{L^2}
				+ \norm{Z}{H^4} \norm{\nabla^2 Z}{L^2}
				+ \norm{Z}{H^4}^2 \norm{\nabla Z}{L^2}
			} \norm{Z}{H^4}^2 \norm{\nabla^4 Z}{L^2}
			\\
			&\hspace{-3.8cm}\lesssim \norm{Z}{H^4} \brac{
				\norm{\nabla^3 Z}{L^2}
				+ \norm{\nabla^2 Z}{L^2}
				+ \norm{\nabla Z}{L^2}
			} \norm{\nabla^4 Z}{L^2}
			\\
			&\hspace{-3.8cm}\lesssim \norm{Z}{H^4} \brac{
				\norm{\nabla^4 Z}{L^2}^2
				+ \norm{\nabla^3 Z}{L^2}^2
				+ \norm{\nabla^2 Z}{L^2}^2
				+ \norm{\nabla Z}{L^2}^2
			}.
		\end{align*}
		Estimating all terms of type $\RN{2}$ in this fashion yields, for $i=1,2,3,4$,
		$
			\abs{ \calN_{\RN{2},i} } \lesssim \norm{Z}{H^4} \sum_{j=1}^i \norm{\nabla^j Z}{L^2}^2.
		$

		Nonlinearities of type $\RN{3}$ are the most delicate to estimate due to the presence of $\nabla\times u$, $\nabla\brac{\nabla\cdot \omega}$, and $\Delta\omega$.
		The presence of these terms causes two difficulties
		\begin{enumerate}
			\item	when $ \partial^\alpha $ hits $\Delta\omega$ (or $\nabla\brac{\nabla\cdot\omega}$) we must integrate by parts since we do not have any control,
				even through the dissipation, on $\nabla^{\abs{\alpha}+2} \omega$, and
			\item	there are precisely two terms in which more than two derivatives of order three or above appear, terms for which we cannot simply use $L^2$ and $L^\infty$
				in the right-hand side of the generalized H\"{o}lder inequality. This is easily remedied by more carefully choosing the $L^p$ spaces used, which is done explicitly below.
		\end{enumerate}
		Let us write the nonlinearity schematically as $N_\RN{3} \brac{Z} = m\brac{JJ_{eq}\inv} b\brac{Z,\nabla^2\omega}$ for some bilinear form $b$.
		Here is how we handle (1) discussed above: for any multi-index $\alpha$
		\begin{align*}
			\calN_\alpha \defeq \int_{\T^3} m\brac{JJ_{eq}\inv} b \brac{Z,\Delta \partial^\alpha \omega} \cdot \partial^\alpha \omega
			= - \int_{\T^3} \partial_i \brac{m\brac{JJ_{eq}\inv}} b\brac{Z, \partial_i \partial^\alpha \omega} \cdot \partial^\alpha \omega
			\\
			- \int_{\T^3} m\brac{JJ_{eq}\inv} b\brac{\partial_i Z, \partial_i \partial^\alpha \omega} \cdot \partial^\alpha \omega
			- \int_{\T^3} m\brac{JJ_{eq}\inv} b\brac{Z, \partial_i \partial^\alpha \omega} \cdot \partial_i \partial^\alpha \omega
		\end{align*}
		and hence
		\begin{align*}
			\vbrac{\calN_\alpha}
			\lesssim \brac{
				\norm{\nabla Z}{\infty} \norm{Z}{\infty}
				+ \norm{\nabla Z}{\infty}
			} \norm{\nabla^{\abs{\alpha}+1}\omega}{L^2} \norm{\nabla^{\abs{\alpha}} \omega}{L^2}
			+ \norm{Z}{\infty} \norm{\nabla^{\abs{\alpha}+1}\omega}{L^2}
			\\
			\lesssim \norm{Z}{H^4} \brac{
				\norm{\nabla^{\abs{\alpha}+1} \omega}{L^2}^2
				+ \norm{\nabla^{\abs{\alpha}+1} \omega}{L^2}^2
			}.
		\end{align*}
		Now we show how to handle (2) discussed above. Both terms under consideration appear when $\abs{\alpha} = 4$, and so we write $ \partial^\alpha = \partial_{ijkl}$.
		Note that we will use \fref{Corollary}{cor:bounds_on_m_J_Jeq_inv} to bound $\abs{m\brac{JJ_{eq}\inv}}$ above by $\abs{\nabla^3 J} + \abs{\nabla^2 J}\abs{\nabla J} + \abs{\nabla J}^3$,
		but below we will only indicate how to deal with the first one amongst these three terms
		(since the last two can be taken care of by a generalized H\"{o}lder inequality using only $L^2$ and $L^\infty$).
		We have, using the fact that $H^1\brac{\T^3} \hookrightarrow L^4\brac{\T^3}$,
		\begin{align*}
			\vbrac{
				\int_{\T^3} m\brac{JJ_{eq}\inv} b\brac{\partial_{ijk} Z, \Delta\partial_l\omega} \cdot \partial_{ijkl}\omega
				+ \int_{\T^3} \partial_{ijk} \brac{m \brac{JJ_{eq}\inv}} b\brac{Z,\Delta\partial_l\omega} \cdot \partial_{ijkl}\omega
			}
			\\
			&\hspace{-12cm}\lesssim \int_{\T^3} \abs{\nabla^3 Z} \abs{\nabla^3 \omega} \abs{\nabla^4 \omega}
			+ \norm{Z}{\infty} \int_{\T^3} \abs{\nabla^3 J} \abs{\nabla^3 \omega} \abs{\nabla^4 \omega}
			+ \dots
			\\
			&\hspace{-12cm}\lesssim \int_{\T^3} \abs{\nabla^3 Z} \abs{\nabla^3 \omega} \abs{\nabla^4 \omega}
			\lesssim \norm{\nabla^3 Z}{H^1} \norm{\nabla^3 \omega}{H^1} \norm{\nabla^4\omega}{L^2}
			\\
			&\hspace{-12cm}\lesssim \norm{Z}{H^4} \brac{ \norm{\nabla^3\omega}{L^2} + \norm{\nabla^4\omega}{L^2} } \norm{\nabla^4 \omega}{L^2}
			\lesssim \norm{Z}{H^4} \brac{ \norm{\nabla^4\omega}{L^2} + \norm{\nabla^3\omega}{L^2} }.
		\end{align*}

		Estimating all nonlinearities of type $\RN{3}$ in this fashion yields, for $i=1,2,3,4$,
		\begin{equation*}
			\abs{ \calN_{\RN{3},i}} \lesssim \norm{Z}{H^4} \brac{
				\sum_{j=1}^i \norm{\nabla^j Z}{L^2}^2 + \norm{\nabla^{i+1} \brac{u,\omega}}{L^2}^2
			}.
		\end{equation*}

		Finally we consider the case $\alpha = 0$.
		Using the fact that $ \int_{\T^3} \brac{u\cdot\nabla}Z\cdot Z = 0$ and that $\sbrac{\Omega, J}:J=0$ (see \fref{Lemma}{lemma:antisymmetry_commutator_on_space_symmetric_matrices})
		we see that
		\begin{align*}
			\int_{\T^3} N\brac{Z}\cdot Z
			= - \int_{\T^3}
				m\brac{JJ_{eq}\inv} \brac{
					\omega \times J \omega
					+ \omega \times J \omega_{eq}
					+ \omega \times J_{eq} \omega
					+ \omega_{eq} \times J \omega
				} \cdot\omega
			\\
			+ \int_{\T^3}
				JJ_{eq}\inv m\brac{JJ_{eq}\inv} \brac{
					\omega \times J_{eq} \omega_{eq}
					+ \omega_{eq} \times J \omega_{eq}
					+ \omega_{eq} \times J_{eq} \omega
					+ 2 \kappa \omega
					- \kappa\nabla\times u
					- \tilde{\alpha} \nabla\brac{\nabla\cdot\omega}
					- \tilde{\gamma}\Delta\omega
				}\cdot\omega.
		\end{align*}
		It thus follows from \fref{Lemmas}{lemma:small_energy_regime} and \ref{lemma:conseq_small_energy_regime} that
		$
			\vbrac{ \int_{\T^3} N\brac{Z}\cdot Z} \lesssim \norm{Z}{H^4} \norm{Z}{L^2}^2.
		$
	\end{proof}

\subsection{The chain of energy inequalities}
\label{sec:chain_energy_ineq}

	We begin this section by combining the results of \fref{Sections}{sec:ED_identities} and \ref{sec:est_interactions} in order to obtain a chain of energy inequalities.

	\begin{prop}[Chain of energy inequalities]
	\label{prop:chain_energy_ineq}
		There exist $C_0, C_1, C_D > 0$ such that for every $0 < \varepsilon < 1$ there exists $0 < \delta\brac{\varepsilon} < 1$ such that
		if $\sup_{0 \leqslant t \leqslant T} \norm{Z\brac{t}}{H^4} \leqslant \delta\brac{\varepsilon}$
		and $Z$ solves \eqref{eq:PDE_compact_not} then
		\begin{equation*}
			\frac{1}{2} \Dt \norm{\sqrt{D}Z}{L^2}^2 + \ds\brac{u,\omega} \leqslant \varepsilon \norm{Z}{L^2}^2 + C_0 \brac{\norm{\bar{\omega}}{L^2}^2 + \norm{a}{L^2}^2}
		\end{equation*}
		and, for $k=1,2,3,4$,
		\begin{equation*}
			\frac{1}{2} \Dt \norm{\nabla^k \brac{\sqrt{D}Z}}{L^2}^2 + \frac{C_D}{2} \norm{\nabla^k \brac{u, \omega}}{H^1}^2
			\leqslant \varepsilon \norm{\nabla^k Z}{L^2}^2 + C_1 \sum_{i=0}^{k-1} \norm{\nabla^i Z}{L^2}^2.
		\end{equation*}
	\end{prop}

	\begin{proof}
		Let $\varepsilon > 0$, let $C_D$ and $C_{I,\,k}$ be as in \fref{Lemma}{lemma:coer_diss_vel_avg_zero} and \fref{Proposition}{prop:est_interactions} respectively,
		let $C_\sigma$ be as in \fref{Lemma}{lemma:ibp_unstable_term} for $\sigma\defeq\frac{C_D}{4}$,
		let $n_k \defeq \# \cbrac{\text{multi-index }\alpha : \abs{\alpha} = k}$, and pick
		$\delta \defeq \min\limits_{0\leqslant k \leqslant 4} \cbrac{ \delta_0, \frac{\varepsilon}{C_{I,k} n_k}, \frac{C_D}{4 C_{I,k}} }$.

		First we consider $k=0$. Observe that for $2C_0 \defeq 2\brac{\lambda-\nu} + {\brac{\frac{\tor}{2\kappa}}}^2$,
		\begin{equation}
		\label{eq:unstable_term_CS}
			B\brac{\bar{\omega},a} = 2C_0 \int_{\T^3} \bar{\omega}^\perp \cdot a \leqslant C_0 \brac{ \norm{\bar{\omega}}{L^2}^2 + \norm{a}{L^2}^2 }.
		\end{equation}
		By \fref{Propositions}{prop:ed_rel}, \ref{prop:est_interactions}, \eqref{eq:unstable_term_CS}, and the fact that $\delta \leqslant \frac{\varepsilon}{C_{I,0}}$
		we deduce the energy inequality for $k=0$.

		Now we consider $k=1,2,3,4$. For any nonzero multi-index $\alpha$ it follows from \fref{Propositions}{prop:ed_rel} and \ref{prop:est_interactions} and from
		\fref{Lemmas}{lemma:coer_diss_vel_avg_zero} and \ref{lemma:ibp_unstable_term} that
		\begin{align*}
			\frac{1}{2} \Dt \norm{\sqrt{D}\brac{ \partial^\alpha Z}}{L^2}^2
			+ C_D \norm{ \partial^\alpha \brac{u,\omega} }{H^1}^2
			\leqslant \brac{ \frac{C_D}{4} \norm{ \partial^{\alpha+1} \omega}{L^2}^2 + C_\sigma \norm{ \partial^{\alpha-1} a}{L^2}^2 }
			\\
			+ C_{I,k} \norm{Z}{H^4} \brac{ \sum_{i=1}^k \norm{\nabla^i Z}{L^2}^2 + \norm{\nabla^{k+1}\brac{u,\omega}}{L^2}^2}.
		\end{align*}
		Summing over $\abs{\alpha} = k$ and using that $\delta \leqslant \min \brac{\frac{C_D}{4 C_{I,k}}, \frac{\varepsilon}{C_{I,k}n_k}}$
		we observe that, after absorbing $\norm{\partial^{\alpha+1} \omega}{L^2}^2$ and $\norm{\nabla^{k+1}\brac{u,\omega}}{L^2}^2$ into the dissipation on the left-hand side,
		\begin{equation*}
			\frac{1}{2} \Dt\norm{\nabla^k \brac{\sqrt{D}Z}}{L^2}^2
			+ \frac{C_D}{2} \norm{\nabla^k\brac{u,\omega}}{H^1}^2
			\leqslant n_k C_\sigma \norm{\partial^{k-1} a}{L^2}^2 + \varepsilon \sum_{i=1}^k \norm{\nabla^i Z}{L^2}^2
		\end{equation*}
		from which the result follows upon taking $C_1 \defeq \max\brac{1, n_4 C_\sigma}$.
	\end{proof}

	\noindent
	We now record, in abstract form, a Gronwall-type lemma for chains of differential inequalities.

	\begin{lemma}[Chain of Gronwall inequalities]
	\label{lemma:chain_gronwall_ineq}
		Consider, for $k\geqslant -1$, $E_k : \cobrac{0,\infty} \to \cobrac{0,\infty}$.
		Suppose that there exists $C_{-1}, C > 0$, $0 < \theta \leqslant \theta_0 < \psi$, and $k_\text{max} \geqslant -1$ such that for every $t\geqslant 0$,
		$E_{-1}\brac{t} \leqslant C_{-1} e^{\psi t}$ and every $k\geqslant 0$,
		\begin{equation}
		\label{eq:chain_gronwall_diff_ineq}
			\Dt E_k\brac{t} \leqslant \theta E_k \brac{t} + C \sum_{i=-1}^{k-1} E_i \brac{t}.
		\end{equation}
		Then, for every $0 \leqslant k \leqslant k_\text{max}$, there exist $C_k > 0$ such that for every $t\geqslant 0$
		\begin{equation}
		\label{eq:chain_gronwall_induction_hyp}
			E_k\brac{t} \leqslant C_k \brac{C_{-1} + \sum_{i=0}^k E_i \brac{0}} e^{\psi t} \eqdef \widetilde{C}_k e^{\psi t}.
		\end{equation}
		Moreover: if \eqref{eq:chain_gronwall_diff_ineq} holds for every $k\geqslant -1$ then so does \eqref{eq:chain_gronwall_induction_hyp}.
	\end{lemma}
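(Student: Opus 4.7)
The plan is to proceed by induction on $k$, using the integrating factor $e^{-\theta t}$ to convert each scalar differential inequality into an integral one. The base case $k=-1$ is the hypothesis $E_{-1}(t) \leqslant C_{-1} e^{\psi t}$, which has the stated form if we set $\widetilde{C}_{-1} \defeq C_{-1}$.

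For the inductive step, suppose we have already established $E_i(t) \leqslant \widetilde{C}_i e^{\psi t}$ for every $-1 \leqslant i \leqslant k-1$. Substituting this into \eqref{eq:chain_gronwall_diff_ineq} yields
\begin{equation*}
\Dt E_k(t) \leqslant \theta E_k(t) + C \Bigl( \sum_{i=-1}^{k-1} \widetilde{C}_i \Bigr) e^{\psi t}
= \theta E_k(t) + D_k \, e^{\psi t},
\end{equation*}
where $D_k \defeq C \sum_{i=-1}^{k-1} \widetilde{C}_i$. Multiplying through by $e^{-\theta t}$ gives $\Dt (e^{-\theta t} E_k(t)) \leqslant D_k e^{(\psi - \theta) t}$, and integrating from $0$ to $t$ produces
\begin{equation*}
E_k(t) \leqslant E_k(0) e^{\theta t} + \frac{D_k}{\psi - \theta} \bigl( e^{\psi t} - e^{\theta t} \bigr).
\end{equation*}

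Now I would invoke the assumption $\theta \leqslant \theta_0 < \psi$, which guarantees the \emph{uniform} bound $\tfrac{1}{\psi - \theta} \leqslant \tfrac{1}{\psi - \theta_0}$ (this is the only place where the separation between $\theta$ and $\psi$ enters, and it is crucial to avoid losing control over many inductive steps). Using also $e^{\theta t} \leqslant e^{\psi t}$ for $t \geqslant 0$, this simplifies to
\begin{equation*}
E_k(t) \leqslant \Bigl( E_k(0) + \frac{D_k}{\psi - \theta_0} \Bigr) e^{\psi t},
\end{equation*}
and it remains only to verify that the coefficient can be put in the required form $C_k \bigl( C_{-1} + \sum_{i=0}^{k} E_i(0) \bigr)$. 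This is immediate by unfolding $D_k = C(\widetilde{C}_{-1} + \sum_{i=0}^{k-1} \widetilde{C}_i)$ and using the induction hypothesis on the shape of each $\widetilde{C}_i$: choosing $C_k$ large enough depending on $C$, $\psi$, $\theta_0$, and $C_0, \dots, C_{k-1}$ closes the induction and defines $\widetilde{C}_k$.

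There is no real obstacle here, but the one point to be careful about is that $k_{\max}$ plays no role in the argument beyond indexing where we stop; the induction mechanism itself works identically for every $k \geqslant 0$, which gives the ``moreover'' statement for free when the differential inequality is assumed for all $k$.
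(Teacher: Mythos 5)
Your proof is correct and follows essentially the same argument as the paper: induct on $k$, multiply by the integrating factor $e^{-\theta t}$, integrate, invoke the inductive hypothesis together with the uniform gap $\psi - \theta \geqslant \psi - \theta_0 > 0$, and absorb everything into a fresh constant $C_k$. The only cosmetic difference is that you substitute the inductive bound into the differential inequality before integrating rather than after, which yields the same intermediate estimate.
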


	\begin{proof}
		We induct on $k$, noting that the base case $k=-1$ holds by assumption. Now suppose that \eqref{eq:chain_gronwall_induction_hyp} holds for every $i=-1,\,\dots,\,k-1$.
		Then, by \eqref{eq:chain_gronwall_diff_ineq},
		$
			\Dt \brac{ E_k\brac{t} e^{-\theta t}} \leqslant C \sum_{i=-1}^{k-1} E_i\brac{t} e^{-\theta t}
		$
		and hence, integrating in time and using \eqref{eq:chain_gronwall_induction_hyp}, where $\widetilde{C}_{-1} \defeq C_{-1}$,
		\begin{align*}
			E_k\!\brac{t} \!\leqslant\! E_k\!\brac{0}e^{\theta t} + C \sum_{i=-1}^{k-1} e^{\theta t}\int_0^t \widetilde{C}_i e^{\brac{\psi-\theta}s} ds
			\!\leqslant\!\brac{ E_k\!\brac{0} + \frac{C}{\psi-\theta_0} \sum_{i=-1}^{k-1} \widetilde{C}_i} e^{\psi t}
			\!\leqslant\! C_k \brac{C_{-1} + \sum_{i=0}^{k} E_i\!\brac{0}} e^{\psi t}.
		\end{align*}
		for some $C_k > 0$.
	\end{proof}

	We conclude this section by applying \fref{Lemma}{lemma:chain_gronwall_ineq} to the chain of differential inequalities obtained in \fref{Proposition}{prop:chain_energy_ineq},
	which yields a bootstrap energy inequality.

	\begin{prop}[Bootstrap energy inequality]
	\label{prop:bootstrap_energy_ineq}
	There exists $0 < \delta_B < 1$ such that if $Z$ solves \eqref{eq:PDE_compact_not} and $\sup_{0 \leqslant t \leqslant T} \norm{Z\brac{t}}{H^4} \leqslant \delta_B$
		then for every $\psi > 0$ there exists $C\brac{\psi} > 0$ such that if there exists $C_{ins} > 0$ such that
		$E_{\text{ins}} \brac{t} \defeq \norm{\bar{\omega}\brac{t}}{L^2}^2 + \norm{a\brac{t}}{L^2}^2$ satisfies
		$E_{\text{ins}} \brac{t} \leqslant C_{\text{ins}} e^{\psi t}$ for all $t > 0$ then, for all $t\geqslant 0$,
		$$\norm{Z\brac{t}}{H^4}^2 \leqslant C\brac{\psi} \brac{\norm{Z\brac{0}}{H^4}^2 + C_{\text{ins}}} e^{\psi t}.$$
	\end{prop}

	\begin{proof}
		Let us define $E_{-1} \defeq E_{\text{ins}}$, $E_k\brac{t} \defeq \norm{\nabla^k \brac{\sqrt{D}Z}}{L^2}^2$ for every $t\geqslant 0$ and every $k\geqslant 0$,
		and $C\defeq \max \brac{C_0,C_1}$ for $C_0$ and $C_1$ as in \fref{Proposition}{prop:chain_energy_ineq}.
		Observe that $\abs{J_{eq}^{1/2}w}^2 \geqslant \nu/2 \abs{w}^2$ for any $w\in\R^3$ and hence
		$\norm{Z}{L^2}^2 \leqslant \max\brac{1,2/\nu}\norm{\sqrt{D}Z}{L^2}^2$.
		Let $\psi > 0$ and note that we may deduce from \fref{Proposition}{prop:chain_energy_ineq},
		picking $\varepsilon  = \frac{1}{2} \min\brac{1, \psi/2, \psi\nu/2}$, $\delta_B \defeq \delta\brac{\varepsilon}$, and neglecting the dissipation,
		that for $k=0,\,\dots,\,4$ and every $t\geqslant 0$
		\begin{equation}
		\label{eq:bootstrap_en_ineq_diff_ineq}
		\Dt E_k\brac{t} \leqslant \frac{\psi}{2} E_k\brac{t} + C\sum_{i=-1}^{k-1} E_i\brac{t}.
		\end{equation}
		Now suppose that, for every $t\geqslant 0$, $E_{-1}\brac{t} = E_{\text{ins}} \brac{t} \leqslant C_\text{ins} e^{\psi t} \eqdef C_{-1} e^{\psi t}$.
		Using \fref{Lemma}{lemma:chain_gronwall_ineq} we obtain that for $k=0,\,\dots,\,4$
		there exists $C_k > 0$ such that
		$
			E_k \brac{t} \leqslant C_k \brac{ C_{-1} + \sum_{i=0}^k E_i\brac{0}} e^{\psi t}
		$.
		Finally, summing over $k=0,\,\dots,\,4$ we obtain that
		\begin{align*}
			\norm{Z\brac{t}}{H^4}^2
			\leqslant \max\brac{1, 2/\nu} \sum_{k=0}^4 E_k\brac{t}
			&\leqslant \widetilde{C}\brac{\psi} \brac{C_{-1} + \norm{\sqrt{D}Z\brac{0}}{H^4}^2} e^{\psi t}
			\\
			&\leqslant \max\brac{1,\lambda,\nu} \widetilde{C}\brac{\psi} \brac{C_\text{ins} + \norm{Z\brac{0}}{H^4}^2} e^{\psi t}
		\end{align*}
		for some $\widetilde{C}\brac{\psi} > 0$, so we may simply pick $C\brac{\psi} \defeq \max\brac{1,\lambda,\nu} \widetilde{C}\brac{\psi}$.
	\end{proof}


\section{The bootstrap instability argument}
\label{sec:bootstrap}

	In this section we prove our main result using a Guo-Strauss bootstrapping argument.
	This technique was introduced by Guo and Strauss in \cite{guo_strauss_bgk}, inspired by \cite{guo_strauss_double_humped} and \cite{friedlander_strauss_vishik}.
	For a cleanly written and very readable form of the bootstrap instability argument we refer to Lemma 1.1 of \cite{guo_hallstrom_spirn}.

	For the purpose of the theorem below, we define what we mean by a strong solution of \eqref{eq:stat_prob_lin_mom}--\eqref{eq:stat_prob_microinertia}.
	\begin{definition}[Strong solutions]
	\label{def:strong_sol}
	For any $X_0 \in H^2 \brac{\T^3}$ and any $T > 0$
	we define a \emph{strong solution} of \eqref{eq:stat_prob_lin_mom}--\eqref{eq:stat_prob_microinertia} with initial condition $X_0$
	to be any function $X \in L^\infty \brac{ \sbrac{0,T},\, H^2\brac{\T^3}}$ with $\pdt X \in L^\infty \brac{ \sbrac{0,T}, \, L^2\brac{\T^3}}$
	for which \eqref{eq:stat_prob_lin_mom}--\eqref{eq:stat_prob_microinertia} is satisfied almost everywhere in $\brac{0,T}\times\T^3$ and such that $X(0) = X_0$.
	\end{definition}

	\begin{thm}[Bootstrap instability]
	\label{thm:bootstrap_instability}
		Let $\eta_*$ be as in \fref{Proposition}{prop:max_unstable_evals} and assume that \eqref{eq:positivity_requirement_on_viscosity_constants} holds.
		There exists $\theta,\delta > 0$ and $Z_0 \in L^2\brac{\T^3,\,\R^3\times\R^3\times\R^{3\times 3}}$ such that for all $0 < \iota < \delta$
		if we define $T_I \defeq \frac{1}{\eta_*}\log\frac{\theta}{\iota}$ then there exists a strong solution $X=\brac{u,\omega,J}\in L^\infty \brac{ \sbrac{0, T_I},\, H^4\brac{\T^3}}$
		of \eqref{eq:stat_prob_lin_mom}--\eqref{eq:stat_prob_microinertia} with pressure $p\in L^\infty \brac{ \sbrac{0, T_I},\, H^4\brac{\T^3}}$
		and initial condition $X\brac{0} = X_{eq} + \iota Z_0$ such that $\norm{X\brac{T_I}-X_{eq}}{L^2} > \frac{\theta}{2}$.
	\end{thm}
	\begin{proof}
		The crux of the argument is to compare three timescales: the instability timescale $T_I$, the linear-dominance timescale $T_L$, and the smallness timescale $T_S$.
		We will show that at times living in both the linear-dominance and the smallness timescale (i.e. times anterior to both $T_L$ and $T_S$) two key estimates hold,
		namely \eqref{eq:bootstrap_high_energy} and \eqref{eq:bootstrap_diff_nonlin_lin}.
		This will allow us, by way of contradiction, to show that the instability timescale is the shortest of the three.
		It will thus follow that instability occurs while the dynamics are dominated by the linearization and while we are in the small energy regime.

		We begin by recalling appropriate notation from previous results.
		Let $\brac{u_0,\omega_0,a_0} \eqdef Y$ be as in \fref{Proposition}{prop:max_unstable_sol} and note that without loss of generality we may assume that $\norm{Y}{L^2}=1$.
		Define $Z_0 \defeq \brac{u_0, \omega_0, J_0}$
		where $J_0 = \begin{pmatrix} 0_{2\times 2} & a_0 \\ a_0^T & 0 \end{pmatrix}$.
		Let $\delta_0$ be as in the small energy regime (c.f. \fref{Definition}{def:small_energy_regime}),
		let $C_S \defeq C_S\brac{\frac{\eta_*}{2}}$ as in \fref{Proposition}{prop:semigroup_lin_prob},
		let $C_N$ be as in \fref{Proposition}{prop:est_nonlinearity},
		let $\psi \defeq 2\eta_*$ such that $C_B \defeq C\brac{\psi}$ and $\delta_B$ are as in \fref{Proposition}{prop:chain_energy_ineq},
		and let $\delta_\text{lwp}$ be as in \fref{Theorem}{thm:lwp} with $\delta_\text{lwp}$ being chosen small enough so as to ensure that $L \leqslant \eta_*$.
		
		We can now define the appropriate small scales $\theta$ and $\delta$, which in turn will later allow us to precisely define the timescales.
		Let
		\begin{equation*}
			\theta = \frac{1}{2} \min\brac{
				\delta_0,
				\delta_B,
				\frac{1}{C},
				\norm{Z_0}{L^2} {\brac{\frac{\delta_\text{lwp}}{\norm{Z_0}{H^4}}}}^{\eta_*/L}
			},
		\end{equation*}
		\begin{equation*}
			\delta = \frac{1}{2} \min\brac{
				1,
				\delta_\text{lwp},
				\frac{\theta}{\norm{Z_0}{H^4}},
				{\brac{	C_B \brac{\norm{Z_0}{H^4}^2+4} \theta }}^{-1/2},
				\frac{1}{2C\theta}
			},
		\end{equation*}
		and let $0 < \iota < \delta$.

		By our local well-posedness result (see \fref{Section}{sec:lwp} and \fref{Corollary}{cor:lwp} in particular)
		there exists $T_E > 0$ and a unique strong solution
		$Z \in L^\infty \brac{ \sbrac{0, T_E};\, H^4\brac{\T^3}}$ of \eqref{eq:PDE_compact_not} with pressure $p\in L^\infty H^4$ and initial data $Z\brac{0} = \iota Z_0$.
		Note that our local existence result (\fref{Theorem}{thm:lwp}) tells us moreover that the solution $Z$ may be continued
		as long as $Z$ remains in an open $H^4$-ball of radius $\delta_\text{lwp}$. We may thus without loss of generality assume $T_E$ to be the \emph{maximal} time of existence
		of the solution in the sense that
		$T_E \defeq \sup\cbrac{T > 0:Z \text{ exists on } \sbrac{0,T} \text{ and } \sup_{0 \leqslant t < T} \norm{Z\brac{t}}{H^4} < \delta_\text{lwp}}$.
		Expanding out the definition of the notation in \eqref{eq:PDE_compact_not} we see that $X \defeq X_{eq} + Z$ is a strong solution of
		\eqref{eq:stat_prob_lin_mom}--\eqref{eq:stat_prob_microinertia} with initial condition $X\brac{0} = X_{eq} + \iota Z_0$.

		We may now define the timescales.
		We define
		$T_L\defeq \sup\cbrac{0<t<T_E:\norm{\bar{\omega}\brac{t}}{L^2} + \norm{a\brac{t}}{L^2} \leqslant 2\iota e^{\eta_* t}}$,
		$T_I \defeq \frac{1}{\eta_*} \log\frac{\theta}{\iota}$, and
		$T_S \defeq \sup\cbrac{0<t<T_E:\norm{Z\brac{t}}{H^4} < \theta}$.
		Note that $T_L \geqslant 0$ since $\norm{Z\brac{0}}{L^2} = \iota$, that $T_S \geqslant 0$ since $\norm{Z\brac{t}}{H^4} = \iota\norm{Z_0}{H^4} < \theta$,
		and that $T_S, T_L \leqslant T_E$

		\textbf{Step 1:}
		Since $\theta \leqslant \delta_B$ we deduce from \fref{Proposition}{prop:bootstrap_energy_ineq} that if $t\leqslant \min\brac{T_L,T_S}$ then
		\begin{equation}
		\label{eq:bootstrap_high_energy}
			\norm{Z\brac{t}}{H^4}^2 \leqslant C_B \iota^2 \brac{\norm{Z_0}{H^4}^2 + 4} e^{2\eta_* t}.
		\end{equation}

		Now we apply the Leray projector to eliminate the pressure and write \eqref{eq:PDE_compact_not}
		in the reduced form $\pdt Z = \Leb Z + \widetilde{N}\brac{Z}$, where $\widetilde{N}\defeq \mathbb{P}D\inv N$.
		More precisely we apply $\mathbb{P}$ and observe that $\bar{\mathbb{P}}\B = \B$ and hence $\mathbb{P}\Leb=\Leb$.
		Indeed this follows from the observation that on one hand, for $k\neq 0$,
		$
			\hat{\bar{\mathbb{P}}}\brac{k} = \brac{I - \frac{k\otimes k}{\abs{k}^2}} \oplus I_3 \oplus I_2 = I - \proj_{V_k}
		$
		and the fact that, since $\hat{\B}_k$ acts on $\C^8/\,V_k$, it follows that $\proj_{V_k} \circ \hat{\B}_k = \hat{\B}_k$,
		whilst on the other hand, for $k=0$, we have that $\hat{P}_L\brac{0} = I_3$ (since constant vector fields are divergence-free) and hence $\hat{\mathbb{P}}\brac{0}=\id$.

		We can thus apply the Duhamel formula to obtain
		\begin{equation*}
			Z\brac{t} - e^{t\Leb} Z\brac{0} = \int_0^t e^{\brac{t-s}\Leb} \widetilde{N}\brac{Z\brac{s}} ds
		\end{equation*}
		which can be estimated, when $t\leqslant \min\brac{T_L,T_S}$, using the fact that $\theta \leqslant \delta_0$, \fref{Proposition}{prop:semigroup_lin_prob},
		the fact that the Leray projector is bounded on $L^2$, the inequality $\norm{D\inv}{} \leqslant \sqrt{\max\brac{1,2/\nu}}$, and \fref{Proposition}{prop:est_nonlinearity} to yield,
		for $C \defeq \frac{2}{\eta_*} \max\brac{1,2/\nu} C_S C_N C_B \brac{ \norm{Z_0}{H^4}^2 + 4}$,
		\begin{equation}
		\label{eq:bootstrap_diff_nonlin_lin}
			\norm{Z\brac{t} - e^{t\Leb}\iota Z_0}{L^2} \leqslant C\iota^2 e^{2\eta_* t}.
		\end{equation}
		
		\textbf{Step 2:}
		Now we show that $T_I = \min\brac{T_I,T_L,T_S}$, using the key estimates \eqref{eq:bootstrap_high_energy} and \eqref{eq:bootstrap_diff_nonlin_lin}.
		First suppose for the sake of contradiction that $T_L = \min\brac{T_I,T_L,T_S}$.
		By definition of $T_L$,
		\begin{equation}
		\label{eq:cons_T_L_min}
			\norm{\bar{\omega}\brac{T_L}}{L^2} + \norm{a\brac{T_L}}{L^2} = 2\iota e^{\eta_* T_L}.
		\end{equation}
		Now note that \eqref{eq:bootstrap_diff_nonlin_lin} applies since $T_L \leqslant T_S$ and thus it follows from \fref{Proposition}{prop:max_unstable_sol} and the choice of $Z_0$ that
		$
			\norm{Z\brac{T_L}}{L^2}
			\leqslant \brac{1 + C\iota e^{\eta_* T_L}} \iota e^{\eta_* T_L}
			< 2 \iota e^{\eta_* T_L}
		$,
		where we have used that $T_L \leqslant T_I$ and hence $C\iota e^{\eta_* T_L}\leqslant C\iota e^{\eta_* T_I} = C\theta < 1$.
		This contradicts \eqref{eq:cons_T_L_min} and hence the linear-dominance timescale $T_L$ is not the smallest of the three timescales considered.

		Now suppose for the sake of contradiction that $T_S = \min\brac{T_I,T_L,T_S}$.
		By definition of $T_S$,
		\begin{equation}
		\label{eq:cons_T_S_min}
			\norm{Z\brac{T_S}}{H^4} = \theta.
		\end{equation}
		Since $T_S \leqslant T_L$ we may use \eqref{eq:bootstrap_high_energy} and since $T_S \leqslant T_I$ we have that $e^{2\eta_* T_S} \leqslant e^{2\eta_* T_I} = \theta$.
		Putting these two facts together tells us that $\norm{Z\brac{T_S}}{H^4}^2 \leqslant C_B \iota^2 \brac{\norm{Z_0}{H^4}^2+4} \theta^2 < \theta$ which contradicts \eqref{eq:cons_T_S_min}.
		Therefore the smallness timescale $T_S$ is not the smallest of the three timescales considered.
		We thus deduce that $T_I = \min\brac{T_I,T_L,T_S}$.

		\textbf{Step 3:}
		Finally we show that $\norm{Z\brac{T_I}}{L^2} > \frac{\theta}{2}$.
		Since $T_I$ is smaller than both $T_L$ and $T_S$ (and hence smaller than $T_E$) we may use \eqref{eq:bootstrap_high_energy} and \eqref{eq:bootstrap_diff_nonlin_lin},
		as well as \fref{Proposition}{prop:max_unstable_sol}, the choice of $Z_0$, and the fact that $\iota e^{\eta_* T_I} = \theta$ to see that
		$
			\norm{X\brac{T_I} - X_{eq}}{L^2}
			= \norm{Z\brac{T_I}}{L^2}
			\geqslant \iota e^{\eta_* T_I} - C\iota^2 e^{2\eta_* T_I}
			= \theta\brac{1 - C\iota\theta}
			> \frac{1}{2}.
		$
	\end{proof}

\appendix
\section{Local well-posedness}
\label{sec:lwp}

	In this section we prove the local well-posedness of \eqref{eq:stat_prob_lin_mom}--\eqref{eq:stat_prob_microinertia}.
	This is done in two steps: we prove local existence in the small energy regime in \fref{Theorem}{thm:lwp}
	and we prove uniqueness within a broader class of solutions in \fref{Theorem}{thm:uniqueness}.
	Notably, this uniqueness result makes no smallness assumption and only requires that the unknowns belong to appropriate Sobolev spaces.

	A key step on the way to our local existence result is to prove that the nonlinearity is sufficiently regular.
	We do this below in \fref{Lemma}{lemma:N_is_analytic} where we prove that the nonlinearity is analytic.

	\begin{lemma}[Analyticity of the nonlinearity]
	\label{lemma:N_is_analytic}
		Let $0 < \delta \leqslant \delta_0$ for $\delta_0$ as in the \hyperref[def:small_energy_regime]{small energy regime}.
		For every $s > \frac{3}{2}$, $N:H^{s+2} \cap H^4_{\delta_0} \to H^s$ is analytic (as a mapping from $H^{s+2}$ to $H^s$).
		Moreover the Lipschitz constant of $N$ on $H^{s+2} \cap H^4_{\delta_0} \to H^s$ approaches zero as $\delta\downarrow 0$.
	\end{lemma}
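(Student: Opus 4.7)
The plan is to decompose $N = N_1 + N_2 + N_3$ from \eqref{eq:record_nonlin_13}--\eqref{eq:record_nonlin_2} into building blocks of two types: (i) bounded multilinear operators acting on $Z$ and its derivatives of order at most two, and (ii) the substitution operator $J \mapsto m(JJ_{eq}\inv)$. Every term in $N$ is obtained from such blocks by pointwise products and finite sums, both of which are analytic on a Banach algebra, and composition of real-analytic maps between Banach spaces is real-analytic. It therefore suffices to verify analyticity of each building block.

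For (i), I recall that $H^s(\T^3)$ is a Banach algebra whenever $s > 3/2$, and that losing one or two derivatives from $H^{s+2}$ lands in $H^{s+1}$ or $H^s$, respectively, both inside $H^s$. Consequently each polynomial piece of $N_1, N_2, N_3$ defines a bounded multilinear map $(H^{s+2})^k \to H^s$. Bounded multilinear maps between Banach spaces coincide with their own Taylor polynomial and are thus entire as real-analytic maps, so each such piece is analytic.

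For (ii), the function $m : \mathfrak{B} \to \R^{3 \times 3}$ is rational by Cramer's rule, $m(A) = \mathrm{adj}(I+A)/\det(I+A)$, and hence real-analytic on the open set $\mathfrak{B}$. I then invoke the standard analytic Nemytskii substitution theorem in Sobolev spaces for $s > n/2$ (see, e.g., Runst--Sickel): if $f : U \subseteq \R^N \to \R^M$ is real-analytic and $u \in H^{s+2}(\T^3, \R^N)$ takes values in a compact subset of $U$, then $u \mapsto f \circ u$ is real-analytic from a neighborhood of $u$ in $H^{s+2}$ to $H^{s+2}$. By \fref{Lemma}{lemma:conseq_small_energy_regime}, $\|Z\|_{H^4} \leqslant \delta_0$ forces $\|JJ_{eq}\inv\|_{L^\infty} \leqslant 1/2$, so the image of $JJ_{eq}\inv$ lies in a compact subset of $\mathfrak{B}$, and $J \in H^{s+2}$ gives $JJ_{eq}\inv \in H^{s+2}$. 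Hence $Z \mapsto m(JJ_{eq}\inv)$ is real-analytic from $H^{s+2} \cap H^4_{\delta_0}$ into $H^{s+2} \hookrightarrow H^s$.

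Combining (i) and (ii) through the Banach-algebra product on $H^s$ establishes that $N$ is real-analytic from $H^{s+2} \cap H^4_{\delta_0}$ to $H^s$. For the Lipschitz constant vanishing as $\delta \downarrow 0$, I observe that every term of $N$ is at least quadratic in the perturbation $Z$: this is immediate for $N_1$, $N_3$, and the first summand of $N_2$; for the remaining terms of $N_2$ involving $m(JJ_{eq}\inv)$, we either have an explicit $JJ_{eq}\inv$ prefactor multiplying an expression linear in $Z$, or else a bilinear or cubic expression in $(\omega, J)$ directly. Consequently every contribution to the differential $DN[Z]$ retains at least one factor of $Z$ once one uses $\|m(JJ_{eq}\inv)\|_{L^\infty} \leqslant 2$, so its operator norm is $O(\delta)$ on $H^{s+2} \cap H^4_\delta$. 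The main technical obstacle is item (ii): turning the pointwise analyticity of the matrix inverse into the analyticity of the Sobolev composition operator requires simultaneously the Banach algebra property of $H^{s+2}$ (demanding $s > 3/2$) and the quantitative $L^\infty$ control on $JJ_{eq}\inv$ supplied by the small energy regime.
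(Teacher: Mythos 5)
Your proposal is correct and follows essentially the same route as the paper: write $N$ as a polynomial composed with $m(JJ_{eq}^{-1})$ and the 2-jet of $Z$, invoke the $H^s$-algebra property for $s>3/2$ together with analyticity of the Nemytskii/substitution operator for the analytic map $m$, and deduce the vanishing Lipschitz constant from the fact that $N$ is at least quadratic so $DN(0)=0$. The only cosmetic differences are that you get the analyticity of $m$ from Cramer's rule rather than the Neumann series and you appeal to the Runst--Sickel composition theorem rather than the paper's self-contained Lemma~\ref{lemma:post_comp_analyt_is_analyt}.
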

	\begin{proof}
		The two key observations are that (i) we may write $N\brac{Z} = P\brac{ m\brac{JJ_{eq}\inv}, Z, \nabla Z, \nabla^2 Z}$ for some polynomial $P$
		and that (ii) $m$ is analytic (recall that $m$ is defined in \fref{Definition}{def:m_and_its_domain}).
		Indeed $m$ can be written as a geometric series, namely $m\brac{A} = \sum_{i=0}^\infty {\brac{-1}}^i A^i$ for every $A\in\mathfrak{B}$,
		where $\mathfrak{B}$ is defined in \fref{Definition}{def:m_and_its_domain}.

		Using \fref{Lemma}{lemma:post_comp_analyt_is_analyt}, the fact that $H^s$ is a continuous algebra when $s > \frac{3}{2}$, and the fact that polynomials are analytic,
		it follows that we may write $N = F\brac{\mathcal{J}^2 Z}$ for some function $F: \text{dom}\, F \subseteq H^s \to H^s$ which is analytic on its domain (i.e. where it is well-defined),
		where $\mathcal{J}^2 Z \defeq \brac{Z, \nabla Z, \nabla^2 Z}$. The last observation we need is that $\mathcal{J}^2 \brac{H^s \cap H^4_{\delta_0}} \subseteq \text{dom}\, F$.
		This holds since, if $Z=\brac{u,\omega,J} \in H^{s+2} \cap H^4_{\delta_0}$ for $\delta_0$ as in the \hyperref[def:small_energy_regime]{small energy regime},
		then by \fref{Lemma}{lemma:conseq_small_energy_regime} we know that $J\mapsto m\brac{JJ_{eq}\inv}$ is well-defined, and hence analytic.
		Since $\mathcal{J}$ is a bounded linear map from $H^{s+2}$ to $H^s$ it is also analytic, and so we may conclude that $N : H^{s+2} \cap H^4_{\delta_0} \to H^s$ is analytic
		as a map from $H^{s+2}$ to $H^s$.

		Finally, note that the polynomial $P$ above is at least quadratic in $\brac{Z, \nabla Z, \nabla^2 Z}$ and that therefore $DN\brac{0} = 0$.
		In particular it follows that the Lipschitz constant of $N$ on balls of vanishingly small radii approaches zero, as claimed.
	\end{proof}
	\begin{remark}
		See \cite{whittlesey} for a brief and clean summary of basic results regarding analytic functions between Banach spaces.
	\end{remark}

	\noindent
	With \fref{Lemma}{lemma:N_is_analytic} in hand we may now prove our local existence result.

	\begin{thm}[Local existence and continuous dependence on the data]
	\label{thm:lwp}
		There are universal constants $\rho, \delta_\text{lwp}, C > 0$
		such that for any $Z_0 = \brac{u_0, \omega_0, J_0} \in H^4$ with $\nabla\cdot u_0 = 0$, $\fint_{\T^3} u_0 = 0$, and $\norm{Z_0}{H^4} < \delta_\text{lwp}$,
		there exists a time of existence $T_\text{lwp} > 0$,
		there exists $Z=\brac{u,\omega,J}\in L^\infty H^4$ with $\brac{u,\omega}\in L^2 H^5$, $\pdt Z\in L^\infty H^2 \cap L^2 H^3$, and $\pdt J \in L^\infty H^3$,
		and there exists $p \in L^\infty H^4 \cap L^2 H^5$ with average zero such that $u$ is divergence-free and has average zero, $\brac{u, p, \omega, J}$ solves
		\begin{equation}
		\label{eq:thm_lwp_PDE}
			\pdt DZ = \widetilde{\Leb}Z + \Lambda\brac{p} + N\brac{Z} \text{ a.e. in } \brac{0,T_\text{lwp}} \text{ and }
			Z\brac{0} = Z_0 \text{ in } H^{4-\frac{1}{4}},
		\end{equation}
		and the estimates
		\begin{equation}
		\label{eq:thm_lwp_estimates}
			\norm{Z}{L^\infty H^4} + \norm{\brac{u, \omega}}{L^2 H^5} + \norm{\pdt Z}{L^\infty H^2 \cap L^2 H^3} + \norm{\pdt J}{L^\infty H^3} \leqslant C \norm{Z_0}{H^4}
		\end{equation}
		and
		\begin{equation}
		\label{eq:thm_lwp_estimate_pressure}
			\norm{p}{L^\infty H^4 \cap L^2 H^5} \leqslant C \norm{u}{L^\infty H^4 \cap L^2 H^5}^2.
		\end{equation}
		hold. Moreover we have the lower bound $T_\text{lwp} \geqslant \frac{1}{\rho}\log\frac{\delta_\text{lwp}}{\norm{Z_0}{H^4}}$.
	\end{thm}
	\begin{proof}
		We proceed via a standard Galerkin scheme and thus omit the fine details of the proof here.
		A key point is that everything we need to know about the nonlinearity for the purpose of this local well-posedness result is obtained in \fref{Lemma}{lemma:N_is_analytic}.

		We now proceed in five steps.
		In Step 1 we eliminate the pressure via Leray projection,
		in Step 2 we prove local well-posedness for a sequence of appropriate approximate problems,
		in Step 3 we obtain uniform bounds on these approximate solutions,
		in Step 4 we pass to the limit via a compactness argument,
		and in Step 5 we reconstruct the pressure.

		First we recall some notation from earlier results which is required to define the smallness parameter $\delta_\text{lwp}$.
		Let $\delta_0$ be as in the \hyperref[def:small_energy_regime]{small energy regime},
		let $\delta = \delta\brac{ \frac{1}{2} }$ be as in \fref{Proposition}{prop:chain_energy_ineq},
		and define $C_2 \defeq \max\brac{1, \lambda, \nu} \max\brac{1, 2/\nu}$.
		Then take $\delta_\text{lwp} \defeq \frac{1}{3} \min\brac{\delta_0 / C_2, \delta}$.

		\textbf{Step 1:} Leray projection eliminating the pressure.

			Recall that we denote the Leray projector by $\PP_L$ and that we write $\PP = \PP_L \oplus I_3 \oplus I_{3 \times 3}$.
			Upon applying $\PP$ to \eqref{eq:thm_lwp_PDE} we thus see that
			(noting that $\PP Z = Z$ since $\nabla\cdot u=0$ and that $\PP$ and $\widetilde{\Leb}$ commute since they are both Fourier multipliers):
			$\pdt DZ = \widetilde{\Leb}Z + \PP N\brac{Z}$.

		\textbf{Step 2:} Local well-posedness of a sequence of approximate problems.
		
		Let $V_n \defeq \setdef{
			Z\in L^2\brac{\T^3; \R^3\times\R^3\times\R^{3 \times 3}}
		}{
			\hat{Z}\brac{k} = 0 \text{ if } \abs{k} > n \text{ and } \nabla\cdot u = 0
		}$, let $\U_n \defeq V_n \cap H^4_{\delta_0/2}$ where $H^\alpha_R$ denotes the open ball around zero of radius $R$ in $H^\alpha$, and
		let $\PP_n$ be the orthogonal projection onto $V_n$ defined by $\hat{\PP}_n \brac{k} = \mathbbm{1} \brac{\abs{k}\leqslant n}$.

		We approximate the system obtained after Leray projection in Step 1 by
		\begin{equation}
		\label{eq:approx_problem}
			\pdt D Z_n = \widetilde{\Leb} Z_n + \PP_n\PP N\brac{Z_n} \text{ and }
			Z_n\brac{0} = \PP_n Z_0.
		\end{equation}
		In order to use standard finite-dimensional ODE theory we write \eqref{eq:approx_problem} as
		\begin{equation}
		\label{eq:approx_problem_std_ODE_form}
			\pdt Z_n = F_n \brac{Z_n} \text{ and } Z_n\brac{0} = \PP_n Z_0
		\end{equation}
		for $F_n = D\inv \brac{\widetilde{\Leb} + \PP_n\PP N}$.
		It follows from \fref{Lemma}{lemma:N_is_analytic} that $F_n$ is analytic from $H^4_{\delta_0}$ to $H^2$,
		and since $\U_n$ is a subset of $H^4_{\delta_0}$ and $\PP\circ\PP$ maps onto $V_n$ we deduce that $F_n$ maps $\U_n$ to $V_n$.

		We may now apply standard ODE theory, which tells us that if we pick an initial condition  $Z_0 =\brac{u_0, \omega_0, J_0} \in H^4$
		which satisfies $\nabla\cdot u_0 = $, $\fint_{\T^3} u_0 = 0$, and $\norm{Z_0}{H^4} < \delta_\text{lwp}$ then there exists a maximal time of existence $T_n > 0$,
		a unique $Z_n \in C^\infty\brac{\cobrac{0,T_n}; \U_n}$ solving \eqref{eq:approx_problem_std_ODE_form},
		and the following blow-up criterion holds: for any $T>0$ if $\sup_{0\leqslant t \leqslant T} \norm{Z_n\brac{t}}{H^4} < \frac{\delta_0}{2}$ then $T\leqslant T_n$.

		\textbf{Step 3:} Uniform bounds on the approximate solutions.
		
		To obtain uniform bounds it suffices to apply \fref{Proposition}{prop:chain_energy_ineq} to the approximate solutions $Z_n$.
		Since \fref{Proposition}{prop:chain_energy_ineq} is only applicable in a small energy regime we must first ensure that $\norm{Z_n}{H^4}$ remains sufficiently small.
		We defined $\tilde{T}_n$ to this effect below.

		Let $\delta_u = \frac{1}{3} \min\brac{\delta_0, \delta}$, and
		let $\tilde{T}_n = \sup\setdef{t>0}{\norm{Z_n}{H^4}\leqslant \delta_u}$.
		Note that $\tilde{T}_n \geqslant T_n$ by the blow-up criterion from Step 1.
		We may now apply a time-integrated version of \fref{Proposition}{prop:chain_energy_ineq} (with $\varepsilon = \frac{1}{2}$) to obtain
		\begin{equation}
		\label{eq:unif_est_approx_ODE_k_0}
			\frac{1}{2} \norm{\sqrt{D}Z_n \brac{t}}{L^2}^2 - \frac{1}{2} \norm{\sqrt{D}Z_n \brac{0}}{L^2}^2
			+ \int_0^t \mathcal{D}\brac{u_n,\omega_n}\brac{s} ds
			\leqslant \int_0^t \brac{ \frac{1}{2} + C_0 } \norm{Z_n \brac{s}}{L^2}^2 ds
		\end{equation}
		and, for $k=1,2,3,4$,
		\begin{align}
			\frac{1}{2} \norm{\nabla^k \brac{\sqrt{D}Z_n \brac{t}}}{L^2}^2 - \frac{1}{2} \norm{\nabla^k \brac{\sqrt{D}Z_n \brac{0}}}{L^2}^2
			+ \int_0^t \frac{C_D}{2} \norm{\nabla^k \brac{u_n,\omega_n}\brac{s}}{H^1}^2 ds
			\nonumber
			\\
			\leqslant \int_0^t \max\brac{ \frac{1}{2}, C_1} \sum_{i=0}^{k} \norm{\nabla^i Z_n \brac{s}}{L^2}^2 ds.
		\label{eq:unif_est_approx_ODE_k_geq_1}
		\end{align}
		where $C_0$, $C_1$, and $C_D$ are as in \fref{Proposition}{prop:chain_energy_ineq}.
		Note that \fref{Proposition}{prop:chain_energy_ineq} as stated applies to solutions of $\pdt DZ = \widetilde{\Leb}Z + N\brac{Z} + \Lambda\brac{p}$
		whereas $Z_n$ satisfies $\pdt DZ_n = \widetilde{\Leb} Z_n + \PP_n \PP_L N \brac{Z_n}$.
		Nonetheless, \fref{Proposition}{prop:chain_energy_ineq} applies to $Z_n$ as well since this theorem relies solely on energy estimates, and in particular,
		since $\int_{\T^3} \Lambda\brac{p}\cdot Z = 0$ when $\nabla\cdot u=0$ and $\int_{\T^3} \PP_n \PP N\brac{Z_n} \cdot Z_n = \int_{\T^3} N\brac{Z_n}\cdot Z_n$
		since $Z_n$ belongs to the image of the projection $\PP_n \circ \PP$, it follows that the estimate obtained for $Z$ in \fref{Proposition}{prop:chain_energy_ineq} also holds for $Z_n$.

		Summing \eqref{eq:unif_est_approx_ODE_k_0} and \eqref{eq:unif_est_approx_ODE_k_geq_1} and using the integral form of the Gronwall inequality tells us that, for any $0 < t < \tilde{T}_n$,
		\begin{equation}
		\label{eq:unif_est_post_Gronwall}
			\norm{Z_n\brac{t}}{H^4}^2 + \int_0^t \norm{\brac{u_n,\omega_n}}{H^5}^2 \leqslant C_2 e^{\rho t} \norm{Z_0}{H^4}^2
		\end{equation}
		where $\rho\defeq 2\brac{1+C_0+C_1}\max\brac{1,2/\nu}$.
		In particular we deduce from the blow-up criterion that if we denote by $T_\text{lwp}$ the infimum of $T_n$ over $n$ then
		$T_\text{lwp} \geqslant \frac{1}{\rho} \log \frac{\delta_\text{lwp}}{\norm{Z_0}{H^4}}$.
		In other words we have a uniform lower bound on the time of existence of the approximate solutions.

		Now we obtain bounds on the time derivative $\pdt Z_n$, which are required for the compactness argument in Step 4.
		Note first that \eqref{eq:unif_est_post_Gronwall} tells us that, for $C_4 = C_2 e^{\rho T_\text{lwp}}$,
		\begin{equation}
		\label{eq:unif_est_Z}
			\sup_n \brac{
				\norm{\brac{u_n, \omega_n, J_n}}{L^\infty H^4}^2 + \norm{\brac{u_n, \omega_n}}{L^2 H^5}^2
			}
			\leqslant C_4 \norm{Z_0}{H^4}^2
		\end{equation}
		where $L^p H^s$ denote $L^p \brac{ \sbrac{0, T_\text{lwp}}; H^s}$.
		Using \fref{Lemma}{lemma:N_is_analytic} and the boundedness of $\widetilde{\Leb}$, $\PP_n$, and $\PP$ we deduce from \eqref{eq:unif_est_Z} that, for some $C_5 > 0$,
		\begin{equation}
		\label{eq:unif_est_pdt_Z}
			\sup_n \brac{
				\norm{\pdt \brac{u_n, \omega_n, J_n}}{L^\infty H^2}^2
				+ \norm{\pdt\brac{u_n,\omega_n}}{L^2 H^3}^2
			} \leqslant C_5 \norm{Z_0}{H^4}^2.
		\end{equation}
		Finally we improve this bound on $\pdt Z_n$ by paying closer attention to the structure of the PDE \eqref{eq:approx_problem}.
		Specifically: since $\widetilde{\Leb}_3$ and $N_3$ lose fewer derivatives than $\widetilde{\Leb}$ and $N$ do, we obtain an improved estimate for $\pdt J_n$:
		\begin{equation}
		\label{eq:unif_est_pdt_J}
			\sup_n \norm{\pdt J_n}{L^\infty H^3}^2 \leqslant C_4 \norm{Z_0}{H^4}^2.
		\end{equation}

		\textbf{Step 4:} Passing to the limit by compactness.
		
		By applying Banach-Alaoglu (i.e. the weak-$\ast$ compactness of bounded sets) to the bounds provided by \eqref{eq:unif_est_Z}, \eqref{eq:unif_est_pdt_Z}, and \eqref{eq:unif_est_pdt_J}
		we obtain a subsequence of $\brac{Z_n}$, which for simplicity we do not relabel, such that
		\begin{equation}
		\label{eq:limits_by_compactness_Z}
				Z_n \overset{\ast}{\rightharpoonup} Z \text{ in } L^\infty H^4,
				\brac{u_n, \omega_n} \rightharpoonup \brac{u, \omega} \text{ in } L^2 H^5,
		\end{equation}
		\begin{equation}
		\label{eq:limits_by_compactness_pdt_Z}
				\pdt Z_n \overset{\ast}{\rightharpoonup} \pdt Z \text{ in } L^\infty H^2,
				\pdt Z_n \rightharpoonup \pdt Z \text{ in } L^2 H^3, \text{ and }
				\pdt J_n \overset{\ast}{\rightharpoonup} \pdt J \text{ in } L^\infty H^3
		\end{equation}
		for some $Z = \brac{u, \omega, J} \in L^\infty H^4$ with $\brac{u, \omega} \in L^2 H^5$,
		$\pdt Z \in L^\infty H^2 \cap L^2 H^3$, and $\pdt J \in L^\infty H^3$.
		Moreover, it follows from Aubin-Lions-Simon that, passing to another subsequence which we do not relabel,
		\begin{equation}
			Z_n \rightarrow Z \text{ in } C^0 H^{4-\frac{1}{4}}
		\end{equation}
		and that $Z\in C^0 H^{4-\frac{1}{4}}$.

		We now pass to the limit. 
		It follows immediately from \eqref{eq:limits_by_compactness_Z} and \eqref{eq:limits_by_compactness_pdt_Z} that
		\begin{equation}
		\label{eq:pass_limit_A}
			\pdt D Z_n \overset{\ast}{\rightharpoonup} \pdt D Z
			\text{ and }
			\widetilde{\Leb} Z_n \overset{\ast}{\rightharpoonup} \widetilde{\Leb} Z
			\text{ in } L^\infty H^2.
		\end{equation}
		To pass to the limit in the nonlinearity we write
		\begin{equation*}
			\PP_n \PP N \brac{Z_n} - \PP N \brac{Z}
			= \PP_n \PP \brac{ N\brac{Z_n} - N\brac{Z} }
			+ \brac{\PP_n - I} \PP N \brac{Z}
			\defeq A + B.
		\end{equation*}
		Passing to the limit in $B$ is immediate: by weak-$\ast$ lower semi-continuity of the $L^\infty H^4$ norm
		we know that $\sup_{0\leqslant t \leqslant T_0} \norm{Z\brac{t}}{H^4} \leqslant \frac{\delta_0}{2} < \delta_0$ such that $N\brac{Z}$ is a well-defined element of $L^\infty H^2$.
		In particular, since $\norm{\brac{I - \PP_n}f}{H^s} \to 0$ for all $s\geqslant 0$ and all $f\in H^s$, it follows that
		\begin{equation}
		\label{eq:pass_limit_B}
			\norm{B}{L^\infty H^2}
			= \norm{ \brac{I - \PP_n} \PP N\brac{Z} }{L^\infty H^2}
			\to 0.
		\end{equation}
		Passing to the limit in $A$ relies on the analyticity of the nonlinearity obtained in \fref{Lemma}{lemma:N_is_analytic}: since $Z_n \to Z$ in $C^0 H^{4-\frac{1}{4}}$ and
		since, as observed above, both the sequence $\brac{Z_n}$ and its limit $Z$ lie in $H^4_{\delta_0/2}$, it follows from \fref{Lemma}{lemma:N_is_analytic}
		(since $2-\frac{1}{4} > \frac{3}{2}$) that $N\brac{Z_n} \to N\brac{Z}$ in $C^0 H^{4-\frac{1}{4}}$. So finally:
		\begin{equation}
		\label{eq:pass_limit_C}
			\norm{A}{L^\infty H^{2-\frac{1}{4}}}
			= \norm{ \PP_n \PP \brac{ N\brac{Z_n} - N\brac{Z} } }{L^\infty H^{2-\frac{1}{4}}}
			\leqslant \norm{ N\brac{Z_n} - N\brac{Z} }{L^\infty H^{2-\frac{1}{4}}}
			\to 0.
		\end{equation}
		We conclude from \eqref{eq:approx_problem}, \eqref{eq:pass_limit_A}, \eqref{eq:pass_limit_B}, and \eqref{eq:pass_limit_C}
		that $Z$ is a strong solution of $\pdt DZ = \widetilde{\Leb}Z + \PP N\brac{Z}$.
		As a consequence we deduce that the conditions $\nabla\cdot u = 0$ and $\fint_{\T^3} u = 0$ are propagated in time, i.e. they hold for every $0 \leqslant t < T_\text{lwp}$.

		Finally we deduce from \eqref{eq:unif_est_Z}, \eqref{eq:unif_est_pdt_Z}, and \eqref{eq:unif_est_pdt_J} and the weak and weak-$\ast$ lower semi-continuity of the appropriate norms that,
		for some $C > 0$,
		\begin{equation}
		\label{eq:lwp_est}
			\norm{\brac{u, \omega, J}}{L^\infty H^4}
			+ \norm{\brac{u, \omega}}{L^2 H^5}
			+ \norm{\pdt\brac{u, \omega, J}}{L^\infty H^2 \cap L^2 H^3}
			+ \norm{\pdt J}{L^\infty H^3}
			\leqslant C \norm{Z_0}{H^4}.
		\end{equation}

		\textbf{Step 5:} Reconstructing the pressure.

		The key observation is that since $\PP = \PP_L \oplus I_3 \oplus I_{3\times 3}$ we may reconstruct $p$ via $I - \PP_L$,
		where $I - \PP_L = \nabla\Delta\inv\nabla\cdot$ as per \fref{Lemma}{lemma:formula_complement_Leray_projector}.
		More precisely: let $p \defeq \Delta\inv\brac{\nabla\cdot N_1\brac{Z}}$ and note that $p$ thus defined has average zero.
		Then, by \fref{Lemma}{lemma:formula_complement_Leray_projector}, $\nabla p = \brac{I - \PP_L}N_1\brac{Z}$ and hence $\Lambda\brac{p} = -\brac{I-\PP}N\brac{Z}$
		such that \eqref{eq:thm_lwp_PDE} holds.
		Finally, since $N_1\brac{Z} = - \brac{u\cdot\nabla}u$ and since $H^s$ is an algebra for $s > 3/2$ we have that, for $s=3$ or $4$,
		\begin{equation*}
			\norm{p}{H^s}
			\lesssim \norm{N_1\brac{Z}}{H^{s-1}}
			= \norm{\brac{u\cdot\nabla}u}{H^{s-1}}
			\lesssim \norm{u}{H^{s-1}} \norm{u}{H^s}.
		\end{equation*}
		Combining these estimates with \eqref{eq:lwp_est} yields \eqref{eq:thm_lwp_estimate_pressure}.
	\end{proof}

	\begin{remark}
		It may appear somewhat odd that the initial condition $Z\brac{0} = Z_0$ of \eqref{eq:thm_lwp_PDE} holds in $H^{4-\frac{1}{4}}$ and not in $H^4$ as one might expect.
		This is due to the loss of spatial regularity incurred when applying the Aubin-Lions-Simon lemma to obtain strong convergence of the approximate solutions in $C^0 H^{4-\frac{1}{4}}$.
		In particular, note that the only thing which is special about $\frac{1}{4}$ is that it sits squarely between $0$ and $\frac{1}{2}$ and that we use that
		$\brac{4 - \frac{1}{4}} - 2 > \frac{3}{2}$ when we leverage \fref{Lemma}{lemma:N_is_analytic} to pass to the limit in the nonlinearity in Step 4 of the proof of \fref{Theorem}{thm:lwp}.
		This means that we can actually show that $Z\brac{0} = Z_0$ in $H^{4-\varepsilon}$ for any $0 < \varepsilon < \frac{1}{2}$, since then $4 - \varepsilon < 4$ such that Aubin-Lions-Simon
		applies and $\brac{4 - \varepsilon} - 2 > \frac{3}{2}$ such that we may still use \fref{Lemma}{lemma:N_is_analytic}.
	\end{remark}

	We now state and prove our uniqueness result.
	Note that the only assumptions made are boundedness of appropriate Sobolev norms of the solutions.
	No smallness assumptions are made here.
	\begin{thm}[Uniqueness]
	\label{thm:uniqueness}
		Suppose that, for $i = 1, 2$, $\brac{u_i, p_i, \omega_i, J_i}$ are strong solutions of
		\begin{equation*}
			\left\{
			\begin{aligned}
				&\pdt u_i + \brac{u_i \cdot \nabla} u_i = \brac{\nabla\cdot T} \brac{u_i, p_i, \omega_i},\\
				&\nabla\cdot u_i = 0,\\
				&J_i \brac{\pdt \omega_i + \brac{u_i\cdot\nabla} \omega_i} + \omega_i \wedge J_i \omega_i
					= 2\vc T\brac{u_i, p_i, \omega_i} + \brac{\nabla\cdot M}\brac{\omega_i} + \tau e_3, \text{ and }\\
				&\pdt J_i + \brac{u_i \cdot\nabla}J_i = \sbrac{\Omega_i, J_i}
			\end{aligned}
			\right.
		\end{equation*}
		on some common time interval $\brac{0,T}$ which agree initially, i.e. which agree at time $t=0$.
		If $J_1$ is uniformly positive-definite, $p_i, \pdt\brac{u_i, \omega_i, J_i} \in L_T^2 L^2$, $\brac{u_i, \omega_i, J_i}, \nabla\brac{u_i, \omega_i, J_i} \in L_T^\infty L^\infty$,
		and $\pdt J_1, \pdt\omega_2 \in L_T^\infty L^\infty$, then these solutions coincide on $\brac{0,T}$.
	\end{thm}
	\begin{proof}
		This follows from simple energy estimates for the equations satisfied by the difference of the two solutions.
		The difference $\brac{u, p, \omega, J} = \brac{u_1 - u_2, p_1 - p_2, \omega_1 - \omega_2, J_1 - J_2}$ satisfies
		\begin{subnumcases}{}
			\brac{\pdt + u_1\cdot\nabla} u = \brac{\nabla\cdot T}\brac{u, p, \omega} + f,
			\label{eq:lemma_eq_sat_diff_1}\\
			\nabla\cdot u_1 = 0,
			\label{eq:lemma_eq_sat_diff_2}\\
			\brac{J_1 \brac{\pdt + u_1\cdot\nabla} + \omega_1 \wedge J_1} \omega = 2\vc T\brac{u, p, \omega} + \brac{\nabla\cdot M}\brac{\omega} + g,
			\label{eq:lemma_eq_sat_diff_3}\\
			\brac{\pdt + u_1\cdot\nabla} J_1 = \sbrac{\Omega_1, J_1}, \text{ and }
			\label{eq:lemma_eq_sat_diff_4}\\
			\brac{\pdt + u_1\cdot\nabla} J = \sbrac{\Omega, J} + h
			\label{eq:lemma_eq_sat_diff_5}
		\end{subnumcases}
		for
		\begin{equation*}
			\left\{
			\begin{aligned}
				&f = -\brac{u\cdot\nabla}u_2\\
				&g = -J\pdt\omega_2 - J_1\brac{u\cdot\nabla}\omega_2 - J\brac{u_2\cdot\nabla}\omega_2 - \omega_1\wedge J\omega_2 - \omega\wedge J_2\omega_2, \text{ and }\\
				&h = -\brac{u\cdot\nabla}J_2 + \sbrac{\Omega, J_2}.
			\end{aligned}
			\right.
		\end{equation*}
		We can thus multiply \eqref{eq:lemma_eq_sat_diff_1}, \eqref{eq:lemma_eq_sat_diff_3}, and \eqref{eq:lemma_eq_sat_diff_5} by $u$, $\omega$, and $J$ respectively to see that,
		for every $0 < t < T$,
		\begin{align*}
			&\int_{\T^3} \frac{1}{2} \abs{u}^2 + \frac{1}{2} J_1 \omega\cdot\omega + \frac{1}{2} \abs{J}^2 \bigg\vert_{s=t}
			- \int_{\T^3} \frac{1}{2} \abs{u}^2 + \frac{1}{2} J_1 \omega\cdot\omega + \frac{1}{2} \abs{J}^2 \bigg\vert_{s=0}
			\\
			+ &\int_0^t \int_{\T^3} 
				\frac{\mu}{2} \abs{\symgrad u}^2
				+ 2 \kappa {\vbrac{\half\nabla\times u - \omega}}^2
				+ \alpha\abs{\nabla\cdot\omega}^2
				+ \frac{\beta}{2} \abs{\symgrad^0 \omega}^2
				+ 2\gamma \abs{\nabla\times\omega}^2
			= \int_0^t \int_{\T^3} f\cdot u + g\cdot\omega + h:J.
		\end{align*}
		We can write this energy-dissipation-interaction relation more succintly as $\mathcal{E}(t)-\mathcal{E}(0) + \int_0^t \mathcal{D} = \int_0^t \mathcal{I}$
		for $\mathcal{I} = \int_{\T^3} f\cdot u + g\cdot\omega + h:J$. It follows from straightforward application of the H\"{o}lder and Cauchy-Schwartz inequalities that the interactions
		are controlled by the energy, i.e. $\abs{\mathcal{I}} \leqslant C\mathcal{E}$ for some constant $C > 0$.
		Note that since the two solutions agree initially we have that $\mathcal{E} (0) = 0$.
		Therefore the integral version of Gronwall's inequality tells us that $\mathcal{E} (t) = 0$ for all $0 < t < T$.
		Since $J_1$ is uniformly positive definite we deduce that $\brac{u,\omega,J} = 0$.
		Finally, since $-\Delta p = \nabla u_1 : {\nabla u}^T + \nabla u : {\nabla u_2}^T = 0$, we conclude that indeed the two solutions coincide.
	\end{proof}

	\noindent
	Putting \fref{Theorem}{thm:lwp} and \fref{Theorem}{thm:uniqueness} together yields our local well-posedness result, stated below.
	\begin{cor}[Local well-posedness]
	\label{cor:lwp}
		The solution obtained in \fref{Theorem}{thm:lwp} is unique.
	\end{cor}
	\begin{proof}
		This is immediate since the assumptions of \fref{Theorem}{thm:lwp} ensure that \fref{Theorem}{thm:uniqueness} applies.
	\end{proof}

\section{Auxiliary results}
\label{sec:auxiliary}
	Here we record auxiliary results which are used throughout the main body of the paper.
	Whilst these results are typically either elementary lemmas or well-known theorems, they are of interest since they are applicable beyond the scope of this paper.

	\begin{lemma}[Lower bound on the real part of complex square roots]
	\label{lemma:low_bound_re_part_complex_sq_root}
		Let $x,y\in\R$ with $y\neq 0$ and let $\alpha > 0$. We follow the convention according to which the square root of a complex number with non-trivial imaginary part
		is chosen to have a strictly positive real part. Then $\re\sqrt{x+iy} > \alpha$ if and only if $x > \alpha^2 - \frac{y^2}{4\alpha^2}$.
	\end{lemma}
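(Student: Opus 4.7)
The plan is to exploit the fact that the map $u \mapsto u^2 - y^2/(4u^2)$ is a strictly increasing bijection on $(0,\infty)$, reducing the inequality to a monotonicity check.

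First I would write $\sqrt{x+iy} = u + iv$ with $u > 0$ (by the convention) and $v \in \R$. Squaring yields the two real equations $u^2 - v^2 = x$ and $2uv = y$. Since $y \neq 0$ forces $u \neq 0$, I can solve $v = y/(2u)$ from the second equation and substitute into the first to obtain the identity
\begin{equation*}
u^2 - \frac{y^2}{4u^2} = x.
\end{equation*}

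Next I would introduce the function $f : (0,\infty) \to \R$ defined by $f(u) = u^2 - y^2/(4u^2)$ and record that $f'(u) = 2u + y^2/(2u^3) > 0$, so $f$ is strictly increasing; together with the limits $f(0^+) = -\infty$ and $f(+\infty) = +\infty$ this makes $f$ a strictly increasing bijection onto $\R$. In particular, by the displayed identity, $u = \re\sqrt{x+iy}$ is the unique preimage $f^{-1}(x)$. Then the equivalence $u > \alpha \iff f(u) > f(\alpha) \iff x > \alpha^2 - y^2/(4\alpha^2)$ is immediate. There is no significant obstacle here; the only thing to be careful about is noting that the hypothesis $y \neq 0$ guarantees $u$ stays strictly positive so that the substitution and the domain of $f$ are legitimate.
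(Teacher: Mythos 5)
Your proof is correct, and it takes a genuinely different route from the paper's. The paper keeps both $u$ and $v$ as free variables and treats the biconditional purely algebraically: it substitutes $x = u^2 - v^2$ and $y = 2uv$ into the target inequality, then rearranges to the factored form $(u-\alpha)(1 + v^2/\alpha^2) > 0$, from which the equivalence with $u > \alpha$ is read off since both $u + \alpha$ and $1 + v^2/\alpha^2$ are positive. You instead eliminate $v$ via $v = y/(2u)$ to arrive at $f(u) = u^2 - y^2/(4u^2) = x$, observe that $f$ is strictly increasing (hence a bijection) on $(0,\infty)$, and transfer the inequality through monotonicity: $u > \alpha \iff f(u) > f(\alpha)$. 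The paper's version is a compact algebraic verification; yours is slightly more conceptual in that the monotonicity of $f$ explains why the threshold for $x$ is exactly $f(\alpha) = \alpha^2 - y^2/(4\alpha^2)$, and as a bonus it shows $u$ is uniquely determined as $f^{-1}(x)$. Both are short and correct; neither has an advantage worth insisting on here.
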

	\begin{proof}
		Let us write $\sqrt{x+iy} = u+iv$ for some $u>0$ and $v\in\R$, such that $x = u^2 - v^2$ and $y = 2uv$.
		What we wish to prove can then be written as $u > \alpha$ if and only if $u^2 - v^2 > \alpha^2 - \frac{u^2 v^2}{\alpha^2}$.
		The latter inequality can be rearranged as $u^2 - \alpha^2 > -\frac{v^2}{\alpha^2} \brac{u^2 - \alpha^2}$.
		This can be simplified, using the fact that $u + \alpha > 0$, to $\brac{u - \alpha} \brac{1 + \frac{v^2}{\alpha^2}} > 0$.
		This is indeed equivalent to $u > \alpha$ so we are done.
	\end{proof}

	\begin{lemma}[Similarity of matrices acting on quotient spaces]
	\label{lemma:sim_mat_act_on_quot_spaces}
		Let $V$ be a subspace of $\C^n$ and let $A$, $G$, and $H$ be complex $n$-by-$n$ matrices which act on $\C^n /\, V$ (c.f. \fref{Definition}{def:lin_map_act_on_quot_space}) such that
		$GH = HG = \proj_{V^\perp}$.
		Then (1) $B \defeq G A H$ acts on $\C^n /\, V$, (2) $A = H B G$, and (3) $A$ and $B$ are similar.
	\end{lemma}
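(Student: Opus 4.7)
The plan is to handle the three claims in order, with the key algebraic facts being that the acting-on-$\C^n/V$ hypothesis yields the cancellations $P_V A = A P_V = 0$, $P_V G = G P_V = 0$, and $H P_V = P_V H = 0$ (each follows by noting that either the image of the operator lies in $V^\perp$ or its kernel contains $V$), while the hypothesis $GH = HG = \proj_{V^\perp}$ exactly says that $G$ and $H$ are mutual inverses when restricted to the invariant subspace $V^\perp$.

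For (1), I would observe that $\im B \subseteq \im G \subseteq V^\perp$ immediately. For $\ker B = V$, the inclusion $V \subseteq \ker H \subseteq \ker B$ is trivial; conversely, if $G A H x = 0$ then $AHx \in \ker G = V$, but also $AHx \in \im A \subseteq V^\perp$, hence $AHx = 0$; the same trick applied again gives $Hx \in \ker A \cap \im H \subseteq V \cap V^\perp = 0$, so $x \in \ker H = V$.

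For (2), I would compute $HBG = HGAHG = \proj_{V^\perp} A \proj_{V^\perp}$, and then use $A \proj_{V^\perp} = A - A \proj_V = A$ (since $\im \proj_V = V = \ker A$) and $\proj_{V^\perp} A = A$ (since $\im A \subseteq V^\perp$) to collapse this to $A$.

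For (3), the main point — and the only place where a real construction is needed — is that $G$ and $H$ themselves are not invertible whenever $V \neq 0$, so I cannot take $P = H$ directly. The fix is to enlarge them by the projection onto the kernel: define $\widetilde{G} := G + \proj_V$ and $\widetilde{H} := H + \proj_V$. Using $G \proj_V = \proj_V G = 0$, $H \proj_V = \proj_V H = 0$, and $\proj_V^2 = \proj_V$, a short direct expansion gives $\widetilde{G}\widetilde{H} = GH + \proj_V = \proj_{V^\perp} + \proj_V = I$, so $\widetilde{G}$ is invertible with inverse $\widetilde{H}$. The same expansion trick — now using $P_V A = A P_V = 0$ alongside $P_V G = H P_V = 0$ — shows $\widetilde{H} B \widetilde{G} = HBG = A$, so $A = \widetilde{G}^{-1} B \widetilde{G}$, establishing the similarity.

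I do not anticipate a serious obstacle: the proof is essentially bookkeeping once one notices that adding $\proj_V$ to $G$ and $H$ repairs invertibility without disturbing any of the cancellations used in (2), because everything that gets multiplied by the added $\proj_V$ piece annihilates by virtue of $A$ and $G$ having image in $V^\perp$ and $H$ having kernel $V$.
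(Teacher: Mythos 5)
Your proposal is correct and uses essentially the same strategy as the paper, in particular the key idea for part (3) of enlarging $G$ and $H$ by $\proj_V$ to repair invertibility while preserving the cancellations. The minor differences (using the $V \cap V^\perp = \{0\}$ trick twice in part (1) rather than the identity $A = HGA$, and collapsing $HBG$ to $\proj_{V^\perp} A \proj_{V^\perp}$ in part (2) rather than checking $HGA = A$ and $AHG = A$ separately) are presentational rather than substantive.
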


	\begin{proof}
		First we show that $B$ acts on $\C^n /\, V$.
		We know that $\im B \subseteq \im G \subseteq V^\perp$ and that $V = \ker H \subseteq \ker B$,
		so it is enough to show that $\ker B \subseteq V$.
		Let $x\in\ker B$.
		Since $Hx\in V^\perp$, it suffices to show that $Hx\in V$ as then $Hx=0$, i.e. $x\in\ker H = V$.
		The key observation is that since $\im A \subseteq V^\perp$ and since $G$ and $H$ are inverses on $V^\perp$, we obtain that $A=HGA$.
		It follows that $AHx = HGAHx = HBx = 0$, i.e. $Hx\in\ker A = V$, and hence (1) holds.
		
		Now observe that in order to prove that $A = HBG$ it is enough to show that $HGA=A$, which was done above, and that $AHG=A$, which we do now.
		Pick any $x\in\C^n$ and write $x=x_\parallel + x_\perp$ for $x_\parallel\in V$ and $X_\perp\in V^\perp$.
		Since $\ker G = \ker A = V$ and since $HG=\proj_{V^\perp}$ it follows that $AHGx = AHGx_\perp = Ax_\perp = Ax$, i.e. indeed $AHG = A$.
		
		Finally we show that $A$ and $B$ are similar by explicitly finding an appropriate change-of-basis matrix.
		Let $P$ be the orthogonal projection onto $V$, i.e. $\ker P = V^\perp$ and $P\vert_V = \id\vert_V$.
		Observe that, since $\ker B = V = \im P$ and since $\im B \subseteq V^\perp = \ker P$, we may deduce that $BP = PB = 0$. Therefore
		\begin{equation}
		\label{eq:witness_3}
			\brac{H+P}B\brac{G+P}=A.
		\end{equation}
		We will now show that $G+P$ and $H+P$ are invertible and $(G+P)^{-1} = H+P$, from which it follows that \eqref{eq:witness_3} witnesses (3).
		Let $x\in\ker\brac{G+P}$ and let us write $x=x_\parallel + x_\perp$ as above.
		Then $0=\brac{G+P}x = Gx_\perp + x_\parallel$ with $Gx_\perp \in V^\perp$ and $x_\parallel\in V$, and hence we must have $Px_\perp = 0$ and $x_\parallel = 0$.
		In particular, since $\ker G = V$, we know that $x_\perp$ belongs to both $V$ and $V^\perp$ and hence $x_\perp = 0$, such that $x=0$.
		This shows that $G+P$ has trivial kernel and is thus invertible.
		We may deduce in exactly the same way that $H+P$ is invertible.   To conclude we simply compute $(H + P)(G+P) = H G + H P + PG + P^2 = HG + P = I.$
	\end{proof}

	\begin{lemma}[Bounds on the real parts of the eigenvalues of a matrix using the spectrum of its symmetric part]
	\label{lemma:bounds_real_part_evals_using_sym_part_mat}
		Let $S$ and $A$ be symmetric and antisymmetric real $n$-by-$n$ matrices respectively. It then holds that $\min\sigma\brac{S} \leqslant \re\sigma\brac{S+A} \leqslant \max\sigma\brac{S}$.
	\end{lemma}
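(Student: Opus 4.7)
The plan is to extract the real part of an eigenvalue of $S+A$ as a Rayleigh quotient for $S$ alone, and then apply the standard min-max bounds for the Hermitian matrix $S$.

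More precisely, I would proceed as follows. Let $\lambda \in \sigma(S+A)$ and pick a unit eigenvector $v \in \C^n$ with $(S+A)v = \lambda v$ and $v^\dagger v = 1$. Taking the inner product with $v$ yields $\lambda = v^\dagger (S+A) v = v^\dagger S v + v^\dagger A v$. Now I would observe two elementary facts about the sesquilinear forms on the right. First, since $S$ is real symmetric it is in particular Hermitian, so $(v^\dagger S v)^* = v^\dagger S^\dagger v = v^\dagger S v$, which forces $v^\dagger S v \in \R$. Second, since $A$ is real antisymmetric it is skew-Hermitian, i.e.\ $A^\dagger = -A$, so $(v^\dagger A v)^* = v^\dagger A^\dagger v = -v^\dagger A v$, which forces $v^\dagger A v \in i\R$. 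Consequently $\re \lambda = v^\dagger S v$.

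It then remains to bound $v^\dagger S v$ for a unit vector $v \in \C^n$. This is the Rayleigh-Ritz characterization: since $S$ is Hermitian it is unitarily diagonalizable in $\C^n$, so writing $v = \sum_j c_j e_j$ in an orthonormal eigenbasis $\{e_j\}$ of $S$ with eigenvalues $\{\mu_j\} \subseteq \sigma(S)$, one has $v^\dagger S v = \sum_j \mu_j |c_j|^2$, which is a convex combination of the $\mu_j$ and therefore lies in $[\min\sigma(S), \max\sigma(S)]$. Combining with $\re \lambda = v^\dagger S v$ gives the two-sided bound for this eigenvalue, and since $\lambda \in \sigma(S+A)$ was arbitrary the stated inequality for the whole spectrum of $S+A$ follows.

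The argument is essentially routine, and there is no real obstacle: the only point requiring any care is the observation that the symmetric/antisymmetric decomposition of $M_k = S + A$ interacts with the Hermitian/skew-Hermitian decomposition of the associated sesquilinear form in exactly the right way, because $S$ and $A$ are \emph{real}. If $S$ and $A$ were merely complex symmetric/antisymmetric this would fail, but in the present setting (as used in Lemmas~\ref{lemma:bounds_real_part_eval_M} and Proposition~\ref{prop:unif_bound_mat_exp}) $S_k$ and $A$ are genuinely real, so the proof closes cleanly.
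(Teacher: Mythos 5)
Your proof is correct and is essentially the paper's argument, just phrased in complex-sesquilinear language rather than by splitting the eigenvector into real and imaginary parts: the paper derives $Sx\cdot x + Sy\cdot y = a(|x|^2+|y|^2)$ by hand from $Mx = ax - by$ and $My = bx + ay$, which is exactly your identity $\re\lambda = v^\dagger S v$ read in real coordinates. Both then finish with the Rayleigh--Ritz bound for the Hermitian matrix $S$.
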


	\begin{proof}
		Let us denote by $\lambda_+$ and $\lambda_-$ the maximal and minimal eigenvalues of $S$, respectively,
		let us define $M = S+A$, and let $a+ib$, $a,b\in\R$, be an eigenvalue of $M$ with eigenvector $x+iy$, $x,y\in\R^n$.
		Then, since $M\brac{x+iy} = \brac{a+ib}\brac{x+iy}$ it follows that $Mx = ax - by$ and $My = bx + ay$.
		In particular
		$
			Sx\cdot x + Sy\cdot y
			= Mx\cdot x + My\cdot y
			= a \brac{\abs{x}^2 + \abs{y}^2}
		$
		where
		$
			Sx\cdot x + Sy\cdot y \leqslant \lambda_+ \brac{\abs{x}^2 + \abs{y}^2},
		$
		and therefore $a \leqslant \lambda_+$.
		We may obtain in exactly the same way that $a \geqslant \lambda_-$, and hence indeed $\lambda_- \leqslant \re\sigma\brac{S+A} \leqslant \lambda_+$.
	\end{proof}

	\begin{thm}[Gershgorin disk theorem]
	\label{thm:Gershgorin}
		Let $A$ be a complex $n$-by-$n$ matrix and let $R_i \defeq \sum_{j\neq i} \abs{A_{ij}}$ for $i=1,\,\dots,\,n$.
		Every eigenvalue of $A$ lies in one of the closed disks $\overline{B\brac{A_{ii},R_i}}$, where $i=1,\,\dots,\,n$.
		These disks are called the \emph{Gershgorin disks} of A.
	\end{thm}

	\begin{proof}
		Let $v$ be an eigenvector of $A$ with eigenvalue $\lambda$.
		Without loss of generality (otherwise we may divide $v$ by $\pm\norm{v}{\infty}$): $v_i = 1$ for some index $i$ and $\abs{v_j} \leqslant 1$ for all indices $j$ different from $i$.
		Now observe that
		\begin{equation*}
			{\brac{Av}}_i = \lambda v_i
			\quad\Leftrightarrow\quad
			A_{ii} v_i + \sum_{j\neq i} A_{ij} v_j = \lambda v_i
			\quad\Leftrightarrow\quad
			\lambda - A_{ii} = \sum_{j\neq i} A_{ij} v_j
		\end{equation*}
		and thus
		$
			\abs{\lambda - A_{ii}}
			\leqslant \sum_{j\neq i} \abs{A_{ij}} \abs{v_j}
			\leqslant \sum_{j\neq i} \abs{A_{ij}}
			 = R_i
		 $
		i.e. indeed $\lambda$ lies in $\overline{B\brac{A_{ii},R_i}}$, which is one of the Gershgorin disks of $A$.
	\end{proof}

	\begin{cor}[Bounds on the imaginary parts of the eigenvalues of a matrix using the Frobenius norm of its antisymmetric part]
	\label{cor:bounds_im_part_evals_using_norm_antisym_part_mar}
		Let $S$ and $A$ be symmetric and antisymmetric real $n$-by-$n$ matrices respectively. Then $\abs{\imp\sigma\brac{S+A}} \leqslant \sqrt{n-1}\norm{A}{2}$,
		where $\norm{A}{2} \defeq \sqrt{A:A}$ is the \emph{Frobenius norm} of $A$.
	\end{cor}
	
	\begin{proof}
		Since $S$ is symmetric, there exists an orthogonal matrix $Q$ and a diagonal matrix $D$ such that $QSQ^T = D$.
		Therefore $Q\brac{S+A}Q^T = D+QAQ^T$. In particular, for $\tilde{A} \defeq QAQ^T$, we know that $S+A$ and $D+\tilde{A}$ have the same spectrum.
		Writing $D = \diag\brac{\lambda_1,\dots,\lambda_n}$ where the $\lambda_i$'s are the eigenvalues of $S$,
		we may apply \fref{Theorem}{thm:Gershgorin} to deduce that the eigenvalues of $D+\tilde{A}$
		lie within closed disks
		centered at $\lambda_i$ (since $\tilde{A}$ is antisymmetric and hence all its diagonal entries are equal to zero) and
		with corresponding radii
		$
			R_i~=~\sum_{j\neq i} \abs{\tilde{A}_{ij}}~\leqslant~\sqrt{n-1} \, \normns{\tilde{A}}{2}.
		$
		The result then follows from the observation that the eigenvalues $\lambda_i$ of the symmetric matrix $S$ are real
		and the fact that $\normns{\tilde{A}}{2}^2 = QAQ^T:QAQ^T = Q^TQAQ^TQ:A = \norm{A}{2}^2$.
\end{proof}

	\begin{lemma}[Bounds on matrix exponentials using the symmetric part]
		\label{lemma:bounds_mat_exp_using_sym_part}
		Let $M$ be a real $n$-by-$n$ matrix, let $S\defeq \frac{1}{2}\brac{M+M^T}$ denote its symmetric part, and let $\sigma$ denote the largest eigenvalue of $S$.
		Then, for every $t > 0$, $\norm{e^{tM}}{\Leb\brac{l^2,l^2}} \leqslant e^{\sigma t}$.
	\end{lemma}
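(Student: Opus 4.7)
The plan is to establish the bound via a differential inequality for $\|e^{tM}x\|^2$ for an arbitrary $x \in \R^n$, combined with a Gronwall-type argument, leveraging the fact that only the symmetric part of $M$ contributes to the time derivative of the squared norm.

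More precisely, I would fix $x \in \R^n$ and set $v(t) \defeq e^{tM}x$, so that $v$ solves the ODE $v'(t) = Mv(t)$ with $v(0) = x$. Differentiating $\|v(t)\|^2$ in time gives
\begin{equation*}
\Dt \|v(t)\|^2 = 2\langle Mv(t), v(t)\rangle = \langle (M+M^T)v(t), v(t)\rangle = 2 \langle S v(t), v(t)\rangle,
\end{equation*}
where the antisymmetric part of $M$ drops out because $\langle Av,v\rangle = 0$ for any antisymmetric $A$. Since $\sigma$ is the largest eigenvalue of the real symmetric matrix $S$, the Rayleigh quotient bound $\langle Sw, w\rangle \leq \sigma \|w\|^2$ holds for every $w \in \R^n$, and so
\begin{equation*}
\Dt \|v(t)\|^2 \leq 2\sigma \|v(t)\|^2.
\end{equation*}

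The integral form of Gronwall's inequality (or equivalently, direct integration after multiplying by $e^{-2\sigma t}$) then yields $\|v(t)\|^2 \leq e^{2\sigma t}\|x\|^2$, i.e. $\|e^{tM}x\| \leq e^{\sigma t}\|x\|$ for every $x$, which gives the operator-norm bound $\|e^{tM}\|_{\Leb(l^2, l^2)} \leq e^{\sigma t}$. There is no real obstacle here: the argument is a standard energy estimate, and the only minor point worth noting is the cancellation of the antisymmetric part when forming the quadratic form, which is precisely what makes the spectrum of $S$ (rather than of $M$) control the growth rate.
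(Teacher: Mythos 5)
Your proof is correct and is essentially identical to the paper's: both define $v(t) = e^{tM}x$, differentiate $\|v(t)\|^2$ to see that only the symmetric part $S$ contributes, bound the quadratic form by $2\sigma\|v\|^2$, and apply Gronwall's inequality to conclude.
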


	\begin{proof}
		This follows from a simple Gronwall inequality upon noticing that, for any $x\in\R^n$, $Mx\cdot x = Sx\cdot x$.
		More precisely: pick any $x_0\in\R^n$ and define $x\brac{t}\defeq e^{tM} x_0$ for every $t\geqslant 0$.
		Observe that $\Dt x\brac{t} = Mx\brac{t}$ and hence $\Dt\norm{x\brac{t}}{2}^2 = 2Sx\brac{t}\cdot x\brac{t} \leqslant 2\sigma\norm{x\brac{t}}{2}^2$.
		Since $x\brac{0} =  x_0$, applying Gronwall's inequality yields that, for every $t\geqslant 0$, $\norm{e^{tM}x_0}{2}^2 = \norm{x\brac{t}}{2}^2 \leqslant e^{2\sigma t}\norm{x_0}{2}^2$,
		from which the result follows.
	\end{proof}

	\begin{lemma}[Bounds on matrix exponentials for Jordan canonical forms]
	\label{lemma:bounds_mat_exp_Jordan_form}
		For any matrix norm $\abs{\,\cdot\,}$ there exists a constant $C_n > 0$ such that for every complex $n$-by-$n$ matrix $M$ in Jordan canonical form,
		if $\eta\defeq\max\re\sigma\brac{M}$ then, for every $t\geqslant 0$, $\abs{e^{tM}} \leqslant C_n \brac{1+t^n} e^{\eta t}$.
	\end{lemma}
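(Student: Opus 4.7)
The plan is to exploit the block-diagonal structure of Jordan canonical forms to reduce to the case of a single Jordan block, where the nilpotent part allows an explicit polynomial bound.

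First, by the equivalence of all norms on $\C^{n \times n}$, it suffices to prove the estimate for a single convenient choice, say the operator norm induced by the $l^2$ norm on $\C^n$; the norm-dependence is then absorbed into the constant $C_n$. Next, since $M$ is in Jordan canonical form we may write $M = \bigoplus_{i=1}^r J_{n_i}\brac{\lambda_i}$ where $J_k\brac{\lambda}$ denotes the standard $k$-by-$k$ Jordan block with eigenvalue $\lambda$ and $\sum_i n_i = n$. Because matrix exponentiation commutes with direct sums, $e^{tM} = \bigoplus_i e^{tJ_{n_i}\brac{\lambda_i}}$, so it suffices to bound each block uniformly and then use $\abs{\bigoplus_i A_i} \leqslant \sum_i \abs{A_i}$ (which costs only a further combinatorial factor depending on $n$).

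For a single Jordan block I would write $J_k\brac{\lambda} = \lambda I_k + N_k$, where $N_k$ is the $k$-by-$k$ nilpotent shift matrix with ones on the superdiagonal, which satisfies $N_k^k = 0$ and $\norm{N_k}{} \leqslant 1$ in operator norm. Since $\lambda I_k$ and $N_k$ commute,
\begin{equation*}
e^{tJ_k\brac{\lambda}} = e^{t\lambda}\, e^{tN_k} = e^{t\lambda} \sum_{j=0}^{k-1} \frac{t^j N_k^j}{j!},
\end{equation*}
whence
\begin{equation*}
\abs{e^{tJ_k\brac{\lambda}}} \leqslant e^{\re\brac{\lambda} t} \sum_{j=0}^{k-1} \frac{t^j}{j!} \leqslant e^{\eta t} \sum_{j=0}^{n-1}\frac{t^j}{j!},
\end{equation*}
using $\re\brac{\lambda_i} \leqslant \eta$ and $k \leqslant n$. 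The finite sum on the right is a polynomial in $t$ of degree at most $n-1$, hence bounded by $C'_n \brac{1 + t^{n-1}} \leqslant C'_n\brac{1 + t^n}$ for a constant depending only on $n$.

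Summing over the at most $n$ Jordan blocks yields the claimed bound $\abs{e^{tM}} \leqslant C_n \brac{1+t^n} e^{\eta t}$. I do not expect any serious obstacle: the only point that requires a little care is verifying that the constant can be taken \emph{uniform} across all Jordan canonical forms of size $n$. This uniformity is automatic because the nilpotent parts $N_k$ are fixed model matrices (depending only on $k \leqslant n$), so the combinatorial factors $1/j!$ and the bound on $\norm{N_k}{}$ are independent of $\lambda$ and of the particular block-size partition; the only freedom is encoded in $\lambda$, which is controlled exclusively through $\eta$.
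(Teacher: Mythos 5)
Your proof is correct and follows essentially the same route as the paper: decompose $M$ into Jordan blocks, split each block as $\lambda I + N$ with $N$ nilpotent, use the finite exponential series for $e^{tN}$, and bound the resulting polynomial by $C_n(1+t^n)$. The only cosmetic difference is that you reduce to the operator norm at the outset and handle the block sum a bit more explicitly (and indeed for the $l^2$ operator norm the block-diagonal sum is a $\max$, which is even cleaner than your stated $\sum$), but the key ideas and structure are identical.
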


	\begin{proof}
		Since $M$ is in Jordan canonical form it can be written as
		$
			M = J_{a_1} \brac{\lambda_1} \oplus \dots \oplus J_{a_k} \brac{\lambda_k}
		$
		where the $\lambda_i$'s are eigenvalues of $M$ and $J_{a} \brac{\lambda} = \lambda I_a + N_a$
		for ${\brac{N_a}}_{ij} = 1$ if $j=i+1$ and ${\brac{N_a}}_{ij} = 0$ otherwise,
		Note that, since $N_a$ is an $a$-by-$a$ matrix whose only non-zero entries are those immediately above the diagonal, it is nilpotent of order $a$.
		In particular, note that since the identity commutes with all matrices, it follows that
		$
			e^{J_a\brac{\lambda}} = e^{\lambda} e^{N_a},
		$
		and recall that for any nilpotent matrix $N$ of order $q$ its matrix exponential is given by a finite sum, i.e.
		$
			e^N
			= \sum_{j=0}^{q-1} \frac{1}{j!} N^j.
		$
		We can thus compute the matrix exponential of $M$ to be
		$
			e^{tM}
			= e^{\lambda_1 t} e^{t N_{a_1}} \oplus e^{\lambda_k t} e^{t N_{a_k}}
		$
		which can be estimated by
		$
			\abs{e^{tM}}
			\leqslant \sum_{i=1}^k e^{\brac{\re\lambda_i} t} \vbrac{\sum_{j=0}^{a_i} \frac{1}{j!} {\brac{tN_{a_i}}}^j}
			\lesssim e^{\eta t} \brac{1 + t^n}
		$
		where have used that polynomials of degree $q$ in a real variable $x$ can be bounded above (up to a constant) by $1 + x^q$,
		and where the constants up to which the inequalities above hold only depends on $n$ and the choice of the matrix norm.
	\end{proof}

	\begin{cor}[Bounds on matrix exponentials]
	\label{cor:bounds_mat_exp}
		Let $M$ be a real $n$-by-$n$ matrix and let $\eta\!\defeq\!\max\re\sigma\brac{M}$.
		For any matrix norm $\abs{\,\cdot\,}$ there exists a constant $C = C\brac{M} > 0$ such that, for every $t\in\R$, it holds that $\abs{e^{tM}} \leqslant C \brac{1 + t^n} e^{\eta t}$.
	\end{cor}

	\begin{proof}
		This follows from \fref{Lemma}{lemma:bounds_mat_exp_Jordan_form} since every matrix $M$ is similar to a matrix in Jordan canonical form.
		The constant obtained depends on $M$ since the norm of the matrices used to conjugate $M$ to put it in Jordan canonical form depend on $M$.
	\end{proof}

	\begin{prop}[Construction of a semigroup via matrix exponentials as Fourier multipliers]
	\label{prop:const_semigroup_Fourier}
		Let $M~:~\Z^n \to \R^{l\times l}$ be a family of matrices for which there exists $\eta \in \R$ and $C_F > 0$ such that, for every $k\in\Z^n$ and every $t > 0$,
		\begin{equation}
		\label{eq:uniform_bound_mat_exp}
			\norm{ e^{tM\brac{k}} }{\Leb\brac{l^2,\,l^2}} \leqslant C_F e^{\eta t}.
		\end{equation}
		For any $t\geqslant 0$ the operator $e^{t\Leb}$ defined by the multiplier
		$
			{\brac{e^{t\Leb}}}^\wedge \brac{k} \defeq e^{tM\brac{k}}
		$
		is a bounded operator on $L^2\brac{\T^n; \R^l}$ such that ${\brac{e^{t\Leb}}}_{t\geqslant 0}$ defines an $\eta$-contractive semigroup,
		i.e.
		\begin{enumerate}
			\item	$e^{0 \Leb}$ is the identity,
			\item	for every $t,s\geqslant 0$, $e^{t\Leb} e^{s\Leb} = e^{s\Leb} e^{t\Leb} = e^{\brac{t+s}\Leb}$,
			\item	for every $f\in L^2\brac{\T^n; \R^l}$, $t\to e^{t\Leb} f$ is a continuous map from $\cobrac{0,\infty}$ to $L^2 \brac{\T^n; \R^l}$, and
			\item	for every $r\geqslant 0$,
				$
					\norm{e^{t\Leb}}{\Leb\brac{H^r \brac{\T^n; \R^l}; H^r\brac{\T^n; \R^l}}} \leqslant C_F e^{\eta t}
				$.
		\end{enumerate}
		Moreover, let us write $v = \brac{v_1, \dots, v_p} \in \R^{q_1} \times \dots \times \R^{q_p}$, where $q_1 + \dots + q_p = l$,
		and suppose that there exists $\alpha_1, \dots, \alpha_p \in \N$ and $C_D > 0$ such that for every $k\in\Z^n$ and every $v\in\R^l$,
		\begin{equation}
		\label{eq:semigroup_domain_condition}
			{\vbrac{ M\brac{k}v }}^2 \leqslant C_D \sum_{i=1}^p \jap{k}{2\alpha_i} \abs{v_i}^2.
		\end{equation}
		Then
		\begin{enumerate}
			\setcounter{enumi}{4}
			\item	the domain of the semigroup ${\brac{ e^{t\Leb} }}_{t\geqslant 0}$ is $H^{\alpha_1} \brac{\T^n, \R^{q_1}} \times\dots\times H^{\alpha_p} \brac{\T^n, \R^{q_p}}$ and
			\item	its generator is the linear differential operator $\mathcal{L}$ with symbol $M$, i.e. $\widehat{\Leb} \brac{k} \defeq M\brac{k}$.
		\end{enumerate}
	\end{prop}

	\begin{proof}
		The boundedness of $e^{t\Leb}$ and (4) follow directly from \eqref{eq:uniform_bound_mat_exp}.
		(1) and (2) follow from the fact that, for any matrix $M$, ${\brac{e^{tM}}}_{t\geqslant 0}$ is a representation of the semigroup $\brac{\R_{\geqslant 0},+}$,
		i.e. $e^{0M} = I$ and $e^{tM}e^{sM}=e^{sM}e^{tM}=e^{\brac{t+s}M}$.
		To prove that (3) holds it suffices to show that $t\mapsto e^{t\Leb}f$ is continuous at $t=0$.
		This is immediate since
		\begin{equation*}
			\norm{e^{t\Leb}f-f}{L^2}
			\leqslant \sum_{\abs{k}\leqslant K} \abs{ \brac{e^{tM_k} -I } \hat{f}(k) }^2
			+ {\brac{e^{\eta t} + 1}}^2 \underbrace{\sum_{\abs{k} > K} \abs{\hat{f}(k)}^2}_{\eqdef R_f (K)}
		\end{equation*}
		where $R_f (K) \to 0$ as $K\to\infty$ since $f\in L^2$, and hence, since for any fixed $K$ the collection
		${\cbrac{ t\mapsto e^{tM_k} }}_{\abs{k}\leqslant K}$ is as finite collection of continuous maps,
		we indeed obtain that $e^{t\Leb}f \to f$ in $L^2$ as $t\to 0$.
 
		Finally, to prove (5) and (6) we proceed as we did for (3).
		First we note that, by the mean-value theorem, for every $k\in\Z^n$ and every $t>0$,
		$\frac{e^{tM_k}-I}{t} - M_k = \int_0^1 \brac{ e^{stM_k} - I } M_k ds$.
		Therefore, for any $f\in L^2$ and any $0<t<\delta$, if we write $\hat{f} = \brac{\hat{f}_1,\,\dots,\,\hat{f}_p} \in \R^{q_1}\times\cdots\times\R^{q_p}$ then
		\begin{align*}
			\norm{\frac{e^{t\Leb}f-f}{t}-\Leb f}{L^2}^2
			&\leqslant \sum_{k\in\Z^l} \norm{\int_0^1 \brac{e^{stM_k} - I} ds}{\Leb\brac{l^2,\,l^2}}^2 {\vbrac{M_k \hat{f}\brac{k}}}^2
			\\
			&\leqslant C\brac{K,f} \sum_{\abs{k}<K} \norm{\int_0^1 \brac{e^{stM_k} - I} ds}{\Leb\brac{l^2,\,l^2}}^2
			+ C\brac{\eta,\delta} \underbrace{\sum_{\abs{k}>K} \sum_{i=1}^p \jap{k}{2\alpha_i} {\vbrac{\hat{f}_i\brac{k}}}^2}_{\eqdef H_f\brac{K}}
		\end{align*}
		In particular, if $f\in H^{\alpha_1}\times\cdots\times H^{\alpha_p}$ then $H_f\brac{K}\to 0$ as $K\to\infty$ and thus,
		since, for any fixed $K$, ${\cbrac{ t\mapsto e^{tM_k} }}_{\abs{k}\leqslant K}$ is as finite collection of continuous maps,
		we may conclude that indeed $\frac{e^{t\Leb}f-f}{t}\to\Leb f$ in $L^2$ as $t\to 0$.
	\end{proof}

	\begin{thm}[Rouch\'{e}]
	\label{thm:Rouche}
		Let $\Omega \subseteq \C$ be a connected open set whose boundary is a simple curve and let $f$ and $g$ be holomorphic in $\Omega$.
		If $\abs{f-g} < \abs{f}$ on $\partial\Omega$ then $f$ and $g$ have the same number of zeros in $\Omega$.
	\end{thm}

	\begin{proof}
		See Chapter 4 of \cite{ahlfors}.
	\end{proof}

	\begin{thm}[Implicit Function Theorem for mixed real-complex functions]
	\label{thm:ift}
		Let $f : \mathcal{O} \subseteq \C\times\R^m\to\C$, where $\mathcal{O}$ is open,
		be continuously differentiable in the real sense (i.e. after identifying $\C$ with $\R^2$ in the canonical way) is continuously differentiable.
		Let $\brac{z_0, v_0}\in\mathcal{O}$ and let us write $f = f\brac{z,v}$ for $z\in\C$ and $v\in\R^m$.
		If
		(1)~$f\brac{z_0, v_0} = 0$ and
		(2)~$\partial_z f\brac{z_0, v_0} \neq 0$
		then there exist open sets $\mathcal{U}\subseteq \C\times\R^m$ and $W\subseteq\R^m$ and a function $g : W\to\C$ which is continuously differentiable in the real sense such that
		(1)~$\brac{z_0, v_0}\in\mathcal{U}, v_0\in W$,
		(2)~$g\brac{v_0} = z_0$,
		(3)~$\brac{g\brac{v},v}\in\mathcal{U}$ for every $v\in W$,
		(4)~$f\brac{g\brac{v},v} = 0$ for every $v\in W$ and
		\begin{equation*}
			\nabla_v g \brac{v_0} = \frac{-\nabla_v f \brac{z_0, v_0}}{\partial_z f\brac{z_0, v_0}}.
		\end{equation*}
		Moreover, if $f$ is more regular, in the real sense, then so is $g$.
	\end{thm}

	\begin{proof}
		See Chapter 9 of \cite{rudin}.
	\end{proof}

	\begin{lemma}[Coercivity implies invertibility and bounds on the inverse]
	\label{lemma:coer_implies_invertible_and_bounds}
		Let $B$ be a real $n$-by-$n$ matrix. If $B$ is coercive, i.e. if there exists $C_0 > 0$ such that for every $x\in\R^n$, $\abs{Bx} \geqslant C_0 \abs{x}$,
		then $B$ is invertible and $\norm{B\inv}{\text{op}} \leqslant \frac{1}{C_0}$.
	\end{lemma}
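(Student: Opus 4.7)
The plan is to prove invertibility first by showing injectivity, then use the square matrix fact that injective implies invertible, and finally derive the operator norm bound by substituting $x = B^{-1}y$ into the coercivity inequality.

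First I would observe that the coercivity hypothesis immediately forces $\ker B = \{0\}$: if $Bx = 0$ then $0 = |Bx| \geqslant C_0 |x|$, so $x = 0$. Since $B$ is a square $n$-by-$n$ matrix acting on $\R^n$, the rank-nullity theorem then guarantees that $B$ is surjective, hence invertible.

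For the operator norm bound, let $y \in \R^n$ be arbitrary and set $x = B^{-1}y$, which is well-defined by the previous step. Applying the coercivity inequality to this $x$ yields
\begin{equation*}
    |y| = |Bx| \geqslant C_0 |x| = C_0 |B^{-1} y|,
\end{equation*}
so $|B^{-1} y| \leqslant |y|/C_0$. Taking the supremum over unit vectors $y$ gives $\norm{B^{-1}}{\text{op}} \leqslant 1/C_0$, which is the claim.

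There is no real obstacle here; the argument is entirely elementary and uses only the definition of coercivity, the rank-nullity theorem, and the definition of the operator norm. The only small point worth stating explicitly in the write-up is why injectivity implies invertibility in this finite-dimensional square setting, to justify the existence of $B^{-1}$ before substituting it into the coercivity bound.
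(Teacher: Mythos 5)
Your proof is correct and matches the paper's argument essentially verbatim: coercivity forces a trivial kernel, which for a square matrix gives invertibility, and the operator-norm bound follows by applying the coercivity inequality to $x = B^{-1}y$. The only difference is that you spell out the rank-nullity justification, which the paper leaves implicit.
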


	\begin{proof}
		Observe that since $B$ is coercive, it has trivial kernel, and is hence invertible. To obtain the bound on the operator norm of $B\inv$ simply observe that for every $y\in\R^n$,
		$\abs{y} = \abs{BB\inv y} \geqslant C_0 \abs{B\inv y}$.
	\end{proof}

	\begin{cor}[Invertibility and bounds for perturbations of the identity]
	\label{cor:invertibility_pert_identity}
		Let $B$ be a real $n$-by-$n$ matrix. If $\norm{B}{\text{op}} < 1$
		then $I+B$ is invertible and $\norm{ {\brac{ I+B }}\inv }{\text{op}} \leqslant \frac{1}{1-\norm{B}{\text{op}}}$.
	\end{cor}

	\begin{proof}
		The key observation is that $I+B$ is coercive with coercivity constant $1-\norm{B}{\text{op}}$. The result then follows from \fref{Lemma}{lemma:coer_implies_invertible_and_bounds}.
	\end{proof}

	\begin{lemma}
	\label{lemma:antisymmetry_commutator_on_space_symmetric_matrices}
		Let $A$ and $N$ be real $n$-by-$n$ matrices such that $N$ is normal, i.e. $NN^T = N^TN$. Then $\sbrac{A,N}:N=0$.
	\end{lemma}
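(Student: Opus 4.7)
The claim is essentially an exercise in reorganizing a trace. Writing the Frobenius inner product as $X:Y = \tr(X^T Y)$, the plan is to expand
\begin{equation*}
    \sbrac{A,N} : N = \tr\bigl((AN)^T N\bigr) - \tr\bigl((NA)^T N\bigr) = \tr(N^T A^T N) - \tr(A^T N^T N),
\end{equation*}
and then push the two matrix factors of $N$ next to each other in each trace using the cyclic property. In the first trace, cyclicity gives $\tr(N^T A^T N) = \tr(A^T N N^T)$, and the second trace already has the form $\tr(A^T N^T N)$. At this point the normality hypothesis $N N^T = N^T N$ makes the two expressions identical, so their difference vanishes.

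The only subtlety to flag is keeping the transposes straight when invoking cyclicity, since $(AN)^T = N^T A^T$ rather than $A^T N^T$; once that is handled, no further input is needed. Since the argument is a three-line trace manipulation, I would simply write it out directly rather than introducing any intermediate lemma. No obstacle is expected.
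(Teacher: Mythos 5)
Your proof is correct and takes essentially the same approach as the paper: both expand the Frobenius inner product as a trace, invoke cyclicity to bring the two copies of $N$ adjacent, and then apply normality. The paper packages the same three-line computation as $NA:N = A:N^TN = A:NN^T = AN:N$, but the content is identical.
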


	\begin{proof}
		This follows from a direct computation: $NA:N = A:N^TN = A:NN^T = AN:N$ and hence $\sbrac{A,N}:N = AN:N-NA:N = 0$.
	\end{proof}

	\begin{prop}[Korn inequality]
	\label{prop:Korn}
		There exists $C_K>0$ such that for every $u\in H^1\brac{\T^3,\,\R^3}$, \\$\norm{\nabla u}{L^2} \leqslant C_K \brac{ \norm{u}{L^2} + \norm{\symgrad u}{L^2}}$.
	\end{prop}

	\begin{proof}
		See Lemma IV.7.6 in \cite{boyer_fabrie}.
	\end{proof}

	\begin{prop}[Korn-Poincar\'{e} inequality]
	\label{prop:Korn_Poincare}
		There exists $C_{KP}>0$ such that for every $u\in H^1\brac{\T^3,\,\R^3}$, $\norm{u}{L^2} \leqslant C_{KP} \brac{ \vbrac{\fint u} + \norm{\symgrad u}{L^2}}$.
	\end{prop}

	\begin{proof}
		This is a consequence of \fref{Proposition}{prop:Korn} -- see for example Lemma IV.7.7 in \cite{boyer_fabrie} -- noting that $\nabla\times u$ has average zero on the torus.
	\end{proof}

	\begin{lemma}[A div-curl identity on the torus]
	\label{lemma:div_curl_identity}
		For any $v\in H^1\brac{\T^3,\,\R^3}$, it holds that $\norm{\nabla v}{L^2}^2 = \norm{\nabla\cdot v}{L^2}^2 + \norm{\nabla\times v}{L^2}^2$.
	\end{lemma}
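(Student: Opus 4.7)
The plan is to prove this identity using Fourier analysis on $\T^3$, exploiting the fact that gradient, divergence, and curl become Fourier multipliers with simple algebraic forms. Alternatively, one could argue via integration by parts after expanding the integrands pointwise; I will indicate both, but recommend the Fourier approach as cleanest.

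First I would expand $v$ in its Fourier series $v(x) = \sum_{k \in \Z^3} \hat{v}(k) e^{i k \cdot x}$. Then $(\widehat{\partial_j v_i})(k) = i k_j \hat{v}_i(k)$, $(\widehat{\nabla\cdot v})(k) = i k \cdot \hat{v}(k)$, and $(\widehat{\nabla \times v})(k) = i k \times \hat{v}(k)$. By Parseval's identity, the claim reduces (up to a uniform factor) to the pointwise algebraic identity
\begin{equation*}
	|k|^2 |\hat{v}(k)|^2 = |k \cdot \hat{v}(k)|^2 + |k \times \hat{v}(k)|^2
\end{equation*}
for every $k \in \R^3$ and every $\hat{v}(k) \in \C^3$.

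Next I would verify this algebraic identity by the standard $\varepsilon$-$\delta$ computation. Writing $w = \hat{v}(k)$ for brevity, I compute
\begin{equation*}
	|k \times w|^2 = \sum_i \brac{\varepsilon_{ijl} k_j w_l} \overline{\brac{\varepsilon_{imn} k_m w_n}} = \varepsilon_{ijl} \varepsilon_{imn} k_j k_m w_l \bar{w}_n,
\end{equation*}
and then apply the identity $\varepsilon_{ijl} \varepsilon_{imn} = \delta_{jm} \delta_{ln} - \delta_{jn} \delta_{lm}$ used earlier in the paper (e.g.\ in the proof of \fref{Lemma}{lemma:B_Q_act_on_V_k}) to get $|k \times w|^2 = |k|^2 |w|^2 - (k\cdot w) \overline{(k \cdot w)} = |k|^2 |w|^2 - |k \cdot w|^2$, using crucially that $k \in \R^3$ so that $k \cdot \bar{w} = \overline{k \cdot w}$. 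Rearranging yields the identity, and summing over $k \in \Z^3$ gives the result.

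There is essentially no obstacle here: every step is elementary. The only small subtlety worth flagging in the write-up is the real-ness of $k$, which is what allows the $k \cdot \hat{v}$ and its conjugate to combine cleanly; if $v$ were allowed to be complex-valued this would be unchanged, but one should not replace $k$ by an arbitrary complex vector. As an alternative, one may prove the identity directly in physical space by expanding $|\nabla \times v|^2 = |\nabla v|^2 - (\partial_j v_k)(\partial_k v_j)$ via the same $\varepsilon$-$\delta$ identity, and then integrating by parts twice on $\T^3$ (no boundary terms arise) to convert $\int (\partial_j v_k)(\partial_k v_j)$ into $\int (\nabla \cdot v)^2$; either route gives the claim.
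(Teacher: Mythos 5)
Your proof is correct and takes essentially the same route as the paper: both reduce, via Parseval, to the pointwise algebraic identity $\abs{k}^2\abs{w}^2 = \abs{k\cdot w}^2 + \abs{k\times w}^2$, the only difference being that the paper derives this by noting that $w\mapsto k\times w/\abs{k}$ is an isometry on $(\Span\cbrac{k})^\perp$ together with the orthogonal split $w = \proj_k w + \proj_{k^\perp} w$, whereas you compute it directly with the $\varepsilon$-$\delta$ contraction. Your remark that $\hat{v}(k)\in\C^3$ while $k\in\R^3$ is a fair point of care that the paper glosses over (it states the pointwise fact for $w\in\R^3$), but it does not affect the argument.
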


	\begin{proof}
		The key observation is that for any $w\in\R^3$ and any nonzero $k\in\Z^3$, $w\mapsto \frac{k\times w}{\abs{k}}$ is an isometry on $\Span_k^\perp$,
		and hence $\abs{w}^2 = \abs{ \proj_k w }^2 + \abs{ \proj_{k^\perp} w }^2 = \frac{\abs{k\cdot w}^2}{\abs{k}^2} + \frac{\abs{k\times w}^2}{\abs{k}^2}$.
		Combining this observation with Parseval's identity allows us to conclude:
		\begin{equation*}
			\norm{\nabla v}{L^2}^2
			= \sum_{k\in\Z^3} \abs{k \otimes \hat{v}\brac{k}}^2
			= \sum_{k\in\Z^3\setminus\cbrac{0}} \abs{k}^2 \abs{ \hat{v}\brac{k}}^2
			= \sum_{k\in\Z^3} \abs{k \cdot \hat{v}\brac{k}}^2
			+ \sum_{k\in\Z^3} \abs{k \times \hat{v}\brac{k}}^2
			= \norm{\nabla\cdot v}{L^2}^2 + \norm{\nabla\times v}{L^2}^2.
		\end{equation*}
	\end{proof}

	\begin{prop}[Estimates from the Fa\`{a} di Bruno formula]
	\label{prop:est_faa_di_bruno}
		Let $\U\subseteq\R^n$ and $\mathcal{V}\subseteq\R^p$ be open and let $g:\U\to\mathcal{V}$ and $F:\mathcal{V}\to\R^q$ be $k$-times differentiable.
		There exists a constant $C = C\brac{n,p,q,k} > 0$ which does not depend on $F$ or $g$ such that, for every $x\in\U$,
		\begin{equation*}
			\vbrac{ \nabla^k \brac{F \circ g} \brac{x} }
			\leqslant C \sum_{i=1}^k \vbrac{ \nabla^i F \brac{g\brac{x}} }
			\sum_{\pi\in P_i\brac{k}} \vbrac{\nabla^\pi g \brac{x}}.
		\end{equation*}
	\end{prop}

	\begin{proof}
		This estimate follows immediately from the Fa\`{a} di Bruno formula, which was first proven in \cite{arbogast} and can be found in a rather clean form in \cite{m_hardy}.
	\end{proof}

	\begin{lemma}[Post-compositions by analytic functions are analytic]
	\label{lemma:post_comp_analyt_is_analyt}
		Suppose that $F:\R^k \to \R^l$ is analytic about zero and let $s>\frac{n}{2}$.
		There exists $\delta > 0$ such that $F^* : H^s_\delta \brac{\T^n; \R^k} \to H^s \brac{\T^n; \R^l}$,
		defined by $F^*\brac{G} = F\circ G$ for every $G\in H^s_\delta$, is analytic.
	\end{lemma}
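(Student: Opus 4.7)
Since $F$ is analytic about zero, by standard Banach-space analyticity considerations there exist $r, M > 0$ and a sequence of symmetric $j$-linear maps $A_j : {(\R^k)}^j \to \R^l$ with $\norm{A_j}{} \leqslant M/r^j$ such that $F\brac{x} = \sum_{j=0}^\infty A_j\brac{x,\,\dots,\,x}$ for $\abs{x}<r$. The plan is to promote each $A_j$ to a bounded symmetric $j$-linear map $B_j : {(H^s)}^j \to H^s$ via pointwise application, and then show that $\sum B_j \brac{G, \dots, G}$ converges absolutely in $H^s$ for $G$ in a small ball, giving the desired analytic expansion of $F^*$.

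The crucial ingredient is that, since $s > n/2$, the space $H^s \brac{\T^n}$ is a Banach algebra: there is a constant $C_{\text{alg}} > 0$ such that $\norm{fg}{H^s} \leqslant C_{\text{alg}} \norm{f}{H^s} \norm{g}{H^s}$. By induction, any $j$-fold pointwise product of $H^s$ functions has $H^s$ norm bounded by $C_{\text{alg}}^{j-1}$ times the product of the individual norms. Each component of $B_j \brac{G_1, \dots, G_j}$ is, by multilinearity of $A_j$, a sum (with coefficients controlled by $\norm{A_j}{}$) of $j$-fold pointwise products of components of the $G_i$'s; therefore $B_j$ is bounded from ${\brac{H^s}}^j$ to $H^s$ with operator norm satisfying $\norm{B_j}{} \leqslant C \norm{A_j}{} C_{\text{alg}}^{j-1} \leqslant CM C_{\text{alg}}^{j-1}/r^j$ for some combinatorial constant $C$ depending only on $k$, $l$, and $j$'s underlying multilinear structure (absorbable into $M$ after adjusting $r$).

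Choosing $\delta \defeq \frac{r}{2 C_{\text{alg}}}$, the power series $\sum_{j=0}^\infty B_j\brac{G,\,\dots,\,G}$ converges absolutely in $H^s$ uniformly on the closed ball $\overline{H^s_\delta}$, since $\sum_j \norm{B_j}{} \norm{G}{H^s}^j \leqslant \frac{M}{C_{\text{alg}}}\sum_j \brac{C_{\text{alg}} \norm{G}{H^s}/r}^j$ is geometric. Denote the sum by $\widetilde{F}\brac{G} \in H^s$. To identify $\widetilde{F}\brac{G}$ with $F \circ G$ pointwise a.e., note that after further shrinking $\delta$ if necessary (using $H^s \hookrightarrow L^\infty$, valid since $s > n/2$) we may assume $\norm{G}{L^\infty} < r$, so that for almost every $x \in \T^n$ the series $\sum_j A_j\brac{G\brac{x},\dots,G\brac{x}}$ converges to $F\brac{G\brac{x}}$; but this pointwise sum agrees with the partial sums of $\widetilde{F}\brac{G}$ evaluated at $x$, which converge to $\widetilde{F}\brac{G}\brac{x}$ in $L^2$ along a subsequence a.e. Hence $F^*\brac{G} = \widetilde{F}\brac{G}$ in $H^s$, and the convergent multilinear expansion exhibits $F^*$ as an analytic map from $H^s_\delta$ to $H^s$.
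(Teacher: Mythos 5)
Your argument is correct and follows essentially the same route as the paper: expand $F$ as a power series of (multi)linear coefficients, lift each term to a bounded multilinear map on $H^s$ using the Banach-algebra property of $H^s$ (valid since $s>n/2$), and sum the resulting geometric majorant. The paper's proof is terser and leaves the identification of the $H^s$-valued sum with the pointwise composition $F\circ G$ implicit, whereas you verify it via the embedding $H^s\hookrightarrow L^\infty$; this is welcome extra care but not a different method.
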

	\begin{proof}
		Let $\delta = \frac{R}{C_s}$ where $R$ is the radius of convergence of $F$ about zero and $C_s$ is the constant from the continuous embedding $H^s \cdot H^s \hookrightarrow H^s$
		and suppose that $F\brac{x} = \sum_{i=0}^{\infty} F_i \bullet X^{\otimes i}$ for every $x\in B\brac{0,R}$, for some fixed tensorial coefficients $F_i$.
		Then indeed, for every $G\in H^s_\delta$, $F^*\brac{G} = \sum_{i=0}^\infty F_i \bullet G^{\otimes i}$ with
		\begin{equation*}
			\sum_{i=0}^{\infty} \abs{F_i}\, \norm{G^{\otimes i}}{H^s}
			\leqslant \sum_{i=0}^\infty \abs{F_i} \, C_s^i \, \norm{G}{H^s}^i
			\leqslant \sum_{i=0}^\infty \abs{F_i} \,R^i
			< \infty.
		\end{equation*}
	\end{proof}

	\begin{lemma}[Formula for the Leray projector and its complement]
	\label{lemma:formula_complement_Leray_projector}
		Let $\PP_L$ denote the Leray projector on the torus.
		Then $\PP_L = -\nabla\times\Delta\inv\,\nabla\times$ and  $I - \PP_L = \nabla\Delta\inv\nabla\cdot$.
	\end{lemma}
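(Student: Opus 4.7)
The plan is to verify both identities on the Fourier side, since all operators involved are translation-invariant on $\T^3$ and thus act as Fourier multipliers; comparing symbols at each wavenumber $k \in \Z^3$ will suffice. I would begin by recalling that the Leray projector has symbol $\hat{\PP}_L(k) = I - \frac{k \otimes k}{\abs{k}^2}$ for $k \neq 0$, with the standard convention on the zero mode, while $\Delta\inv$ is the Fourier multiplier with symbol $-1/\abs{k}^2$ on nonzero modes (and annihilates the mean, consistent with the fact that $\nabla\cdot$ and $\nabla\times$ produce functions of zero average from smooth input, so the composition is always well-defined).

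For the first identity I would compute the symbol of $-\nabla\times\Delta\inv\,\nabla\times$ by successively applying the symbols $ik\times$, $-1/\abs{k}^2$, and $ik\times$: acting on a vector $v \in \C^3$ this yields
\begin{equation*}
    -(ik\times)\cdot\frac{-1}{\abs{k}^2}(ik\times v) \;=\; -\frac{1}{\abs{k}^2}\, k\times(k\times v).
\end{equation*}
The vector triple product identity $a\times(b\times c) = (a\cdot c)b - (a\cdot b)c$ then gives $k\times(k\times v) = (k\cdot v)k - \abs{k}^2 v$, so the symbol simplifies to $v - \frac{(k\cdot v)k}{\abs{k}^2}$, matching $\hat{\PP}_L(k)$ exactly on every nonzero mode.

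For the second identity I would similarly compute the symbol of $\nabla\Delta\inv\nabla\cdot$: applied to $v$, it gives $(ik)\cdot\frac{-1}{\abs{k}^2}(ik\cdot v) = \frac{(k\cdot v)k}{\abs{k}^2}$, which is precisely $I - \hat{\PP}_L(k)$ on nonzero modes. The zero mode is handled consistently by the convention that $\Delta\inv$ annihilates the mean, so both right-hand sides vanish on constants; since for the applications in the paper the pressure and the relevant ranges of $\nabla\times$ and $\nabla\cdot$ have zero average, this is automatic and introduces no difficulty.

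There is no substantive obstacle here: the result is essentially a symbol calculation, and the only minor subtlety is tracking the zero mode under the chosen convention for $\Delta\inv$. The proof is thus short, and I would simply present it as a computation on the Fourier side followed by the triple-product simplification.
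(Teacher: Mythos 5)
Your proof is correct and follows essentially the same approach as the paper: both reduce to a Fourier-symbol computation and the vector triple-product identity. (As a small aside, the paper's proof states $k\times k\times\cdot = \abs{k}^2 - k\otimes k$, which has a sign slip — it should be $k\otimes k - \abs{k}^2 I$ — whereas your computation via $k\times(k\times v) = (k\cdot v)k - \abs{k}^2 v$ is correct and the conclusion is unaffected.)
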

	\begin{proof}
		This is immediate since $\hat{\PP}_L\brac{0} = I$ and $\hat{\PP}_L\brac{k} = I - \frac{k\otimes k}{\abs{k}^2}$ if $k\neq 0$
		and since $k\times k\times \cdot = \abs{k}^2 - k\otimes k$.
	\end{proof}

\section{Derivation of the perturbative energy-dissipation relation}
\label{sec:deriv_relative_energy_dissipation_relation}
	In this section we derive the energy-dissipation relation \eqref{eq:relative_energy_dissipation_relation},
	which is satisfied by solutions of \eqref{eq:stat_prob_lin_mom}--\eqref{eq:stat_prob_microinertia}.
	First recall that the Cauchy stress tensor $T$ and the couple stress tensor $M$ are defined in \eqref{eq:constitutive_equations}.
	We will write $T_{eq} = -\kappa\Omega_{eq}$ for the equilibrium version of the stress tensor.
	For simplicity we will also write $D_t \defeq \pdt + u \cdot\nabla$ for the advective derivative.
	The conservation of linear momentum \eqref{eq:stat_prob_lin_mom} can then be written as $D_t u = \nabla\cdot T$ such that multiplying by $u$ yields
	\begin{equation}
	\label{eq:full_nonlinear_pert_ED_lemma_ED_involving_u}
		\Dt \int_{\T^3} \frac{1}{2} \abs{u}^2
		= \int_{\T^3} D_t \brac{ \frac{1}{2} \abs{u}^2 }
		= \int_{\T^3} D_t u \cdot u
		= \int_{\T^3} \brac{\nabla\cdot T} \cdot u
		= - \int_{\T^3} T:\nabla u.
	\end{equation}
	Similarly, the conservation of angular momentum \eqref{eq:stat_prob_ang_mom} can be written as $$JD_t\omega + \sbrac{\Omega, J}\omega = 2 \vc\brac{T-T_{eq}} + \nabla\cdot M$$
	and hence multiplying by $\omega-\omega_{eq}$ yields
	\begin{equation}
	\label{eq:full_nonlinear_pert_ED_lemma_ED_involving_J_Dt_omega}
		J D_t \omega \cdot \brac{\omega - \omega_{eq}}
		+ \sbrac{\Omega, J} \omega \cdot \brac{\omega - \omega_{eq}}
		= 2\vc \brac{T - T_{eq}} \cdot \brac{\omega - \omega_{eq}}
		+ \brac{\nabla\cdot M} \cdot \brac{\omega - \omega_{eq}}.
	\end{equation}
	The right-hand side of \eqref{eq:full_nonlinear_pert_ED_lemma_ED_involving_J_Dt_omega} is dealt with in the usual way:
	\begin{equation}
	\label{eq:full_nonlinear_pert_ED_lemma_A}
		\int_{\T^3} 2\vc\brac{T-T_{eq}} \cdot \brac{\omega - \omega_{eq}}
			+ \brac{\nabla\cdot M} \cdot \brac{\omega - \omega_{eq}}
		= \int_{\T^3} \brac{T-T_{eq}} : \brac{\Omega - \Omega_{eq}}
			- M : \nabla\brac{\omega - \omega_{eq}}.
	\end{equation}
	Dealing with the left-hand side of \eqref{eq:full_nonlinear_pert_ED_lemma_ED_involving_J_Dt_omega} requires further rearranging.
	Using the fact that the conservation of micro-inertia \eqref{eq:stat_prob_microinertia} can be written as $D_t J = \sbrac{\Omega, J}$ and
	adding and subtracting $ \frac{1}{2} D_t J \brac{\omega-\omega_{eq}}\cdot\brac{\omega-\omega_{eq}}$ yields
	\begin{align}
		J D_t \omega \cdot \brac{\omega - \omega_{eq}} + \sbrac{\Omega, J} \omega \cdot \brac{\omega - \omega_{eq}}
		= D_t \brac{ \frac{1}{2} J\brac{\omega - \omega_{eq}} \cdot \brac{\omega - \omega_{eq}} }
		+ \frac{1}{2} D_t J \brac{\omega + \omega_{eq}} \cdot \brac{\omega - \omega_{eq}}.
		\label{eq:full_nonlinear_pert_ED_lemma_B}
	\end{align}
	The key observation that allows us to conclude is the identity
	$
		\sbrac{\Omega, J}\brac{\omega + v}\cdot\brac{\omega - v} = -\sbrac{\Omega, J}v\cdot v
	$
	for every $v\in\R^3$.
	Combining this identity with $D_t J = \sbrac{\Omega, J}$ tells us that
	\begin{equation}
	\label{eq:full_nonlinear_pert_ED_lemma_C}
		\frac{1}{2} D_t J \brac{\omega + \omega_{eq}} \cdot \brac{\omega - \omega_{eq}}
		= - \frac{1}{2} \brac{ D_t J } \omega_{eq} \cdot \omega_{eq}
		= - D_t \brac{ \frac{1}{2} J \omega_{eq} \cdot\omega_{eq} }.
	\end{equation}
	Finally: combining \eqref{eq:full_nonlinear_pert_ED_lemma_A}, \eqref{eq:full_nonlinear_pert_ED_lemma_B}, and \eqref{eq:full_nonlinear_pert_ED_lemma_C} yields
	\begin{equation*}
		\Dt\brac{
			\int_{\T^3}
				J\brac{\omega-\omega_{eq}}\cdot\brac{\omega-\omega_{eq}}
				- \frac{1}{2} J \omega_{eq} \cdot \omega_{eq}
		} = \int_{\T^3}
			\brac{T-T_{eq}} : \brac{\Omega - \Omega_{eq}}
			- M : \nabla\brac{\omega - \omega_{eq}}.
	\end{equation*}
	Adding this equation to \eqref{eq:full_nonlinear_pert_ED_lemma_ED_involving_u} yields the energy-dissipation relation \eqref{eq:relative_energy_dissipation_relation}.

\section{The 8-by-8 matrix $M$ in all its glory}
\label{sec:M_k}

	In this section we record the matrix $M_k$ in an explicit form.
	Recall that $M_k$ is introduced in \fref{Section}{sec:spec_ana}, and is written there in a compact form well-suited to the analysis of its spectrum.
	However, in order to compute the characteristic polynomial of $M$, we employed the assistance of a symbolic algebra package, and this thus requires providing an explicit form of the matrix $M_k$.
	$M_k$ can be written in block form as
	\begin{equation*}
		M_k = \begin{pmatrix}
			A		& B	& 0_{3\times 2}\\
			B^T		& C	& D\\
			0_{2\times 3}	& E	& F
		\end{pmatrix}
	\end{equation*}
	where
	\begin{align*}
		A =& -\brac{\mu + \kappa/2} \brac{\abs{k}^2 I_3 - k\otimes k}
		= \brac{\mu + \kappa/2} \begin{pmatrix}
			- k_2^2 - k_3^2		& k_1 k_2		& k_1 k_3\\
			k_1 k_2			& - k_1^2 - k_3^2	& k_2 k_3\\
			k_1 k_3			& k_2 k_3		& - k_2^3 - k_3^2\\
		\end{pmatrix},\\
		B =&\, \frac{\kappa}{\abs{k}} \brac{ \abs{k}^2 I_3 - k\otimes k} \diag\brac{ \lambda^{-1/2}, \lambda^{-1/2}, \nu^{-1/2}}
		\\
		=&\, \frac{\kappa}{\sqrt{k_1^2 + k_2^2 + k_3^2}} \begin{pmatrix}
			\brac{k_2^2 + k_3^2} / \sqrt{\lambda}	& - k_1 k_2 / \sqrt{\lambda}			& - k_1 k_3 / \sqrt{\nu}\\
			- k_1 k_2 / \sqrt{\lambda}		& \brac{k_1^2 + k_3^2} / \sqrt{\lambda}		& - k_2 k_3 / \sqrt{\nu}\\
			- k_1 k_3 / \sqrt{\lambda}		& - k_2 k_3 / \sqrt{\lambda}			& \brac{k_1^2 + k_2^2} / \sqrt{\nu}\\
		\end{pmatrix}\\
		D =& \frac{\tau}{2\kappa} \sqrt{1 - \frac{\nu}{\lambda}} \begin{pmatrix} 1&0 \\ 0&1 \\ 0&0 \end{pmatrix},\,
		E = \begin{pmatrix} 1&0&0 \\ 0&1&0 \end{pmatrix},\,
		F = \frac{\tau}{2\kappa} \begin{pmatrix} 0&-1 \\ 1&0 \end{pmatrix}
	\end{align*}
	and
	\begin{align*}
		C =& -\diag\brac{\lambda^{-1/2}, \lambda^{-1/2}, \nu^{-1/2}} \brac{
			2 \kappa I_3
			+ \brac{\alpha + \beta/3 - \gamma} k\otimes k
			+ \brac{\beta + \gamma} \abs{k}^2 I_3
		} \diag\brac{\lambda^{-1/2}, \lambda^{-1/2}, \nu^{-1/2}}
		\\
		&- \brac{1 - \frac{\nu}{\lambda}} \frac{\tau}{2\kappa} \brac{e_2 \otimes e_1 - e_1 \otimes e_2}
	\end{align*}
	such that
	\begin{align*}
		C_{11} &= - \lambda\inv \brac{ 2\kappa + \brac{\alpha + 4\beta / 3} k_1^2 + \brac{\beta + \gamma} \brac{k_2^2 + k_3^2}},
		&&C_{12} = - \lambda\inv \brac{\alpha + \beta/3 - \gamma} k_1 k_2 + \frac{\tau}{2\kappa} \brac{1 - \frac{\nu}{\lambda}},\\
		C_{22} &= - \lambda\inv \brac{ 2\kappa + \brac{\alpha + 4\beta / 3} k_2^2 + \brac{\beta + \gamma} \brac{k_1^2 + k_3^2}},
		&&C_{21} = - \lambda\inv \brac{\alpha + \beta/3 - \gamma} k_1 k_2 - \frac{\tau}{2\kappa} \brac{1 - \frac{\nu}{\lambda}},\\
		C_{33} &= -\nu\inv \brac{ 2\kappa + \brac{\alpha + 4\beta/3} k_3^2 + \brac{\beta + \gamma} \brac{k_1^2 + k_2^2}},
		&&C_{13} = C_{31} = - \lambda^{-1/2} \nu^{-1/2} \brac{\alpha + \beta/3 - \gamma} k_1 k_3,\\
		&&&C_{23} = C_{32} = -\lambda^{-1/2} \nu^{-1/2} \brac{\alpha + \beta/3 - \gamma} k_2 k_3.
	\end{align*}

\begingroup
\renewcommand{\addcontentsline}[3]{} 
	\bibliographystyle{alpha-bis}
	\bibliography{main}
\endgroup
\end{document}